\newcommand{\st}{~|~}
\newcommand{\mst}{~\middle|~}
\newcommand{\R}{\mathbb{R}}
\newcommand{\C}{\mathbb{C}}
\newcommand{\Z}{\mathbb{Z}}
\newcommand{\N}{\mathbb{N}}
\renewcommand{\P}{\mathbb{P}}
\renewcommand{\phi}{\varphi}
\renewcommand{\epsilon}{\varepsilon}
\renewcommand{\emptyset}{\varnothing}
\renewcommand{\setminus}{\smallsetminus}
\newcommand{\e}{\epsilon}
\newcommand{\dee}{\partial}
\newcommand{\Hom}{\mathrm{Hom}}
\DeclareMathOperator{\im}{\operatorname{im}}
\newcommand{\Sym}{\mathrm{Sym}}
\DeclareMathOperator{\Hol}{\mathrm{Hol}}
\newcommand{\1}{^{-1}}
\newcommand{\la}{\langle}
\newcommand{\ra}{\rangle}
\renewcommand{\Tilde}{\widetilde}
\renewcommand{\Im}{\mathrm{Im}}
\newtheorem{mainthm}{Theorem}
\newtheorem{maincor}[mainthm]{Corollary}
\newtheorem{thm}{Theorem}
\numberwithin{thm}{section}
\newtheorem{prop}[thm]{Proposition}
\newtheorem{lem}[thm]{Lemma}
\newtheorem{cor}[thm]{Corollary}
\newtheorem*{claim}{Claim}
\theoremstyle{definition}
\newtheorem{defn}[thm]{Definition}
\newtheorem{ex}[thm]{Example}
\newtheorem{rem}[thm]{Remark}
\begin{document}
\title[Convex iso-Delaunay regions]{Convex iso-Delaunay regions in strata of\\ translation surfaces}
\author{Sam Freedman}
\author{Bradley Zykoski}
\address{University of Chicago}
\email{sfreedman67@uchicago.edu}
\address{Northwestern University}
\email{zykoski@northwestern.edu}
\date{September 16, 2025}
\subjclass[2020]{Primary: 32G15, Secondary: 30F30, 57M50}
\keywords{Translation surfaces, Delaunay triangulations, iso-Delaunay regions, convexity, ribbon graphs, complex-affine structures, triangle matchings}
\begin{abstract}
Strata of translation surfaces are covered by the closures of finitely many iso-Delaunay regions: open subspaces parametrizing surfaces whose Delaunay triangulations are combinatorially equivalent.
We prove that the iso-Delaunay regions for triangulations with involutions called triangle matchings are convex with respect to the angle coordinates of the triangles.
As a corollary, we show that all iso-Delaunay regions in hyperelliptic stratum components are convex.
This work generalizes two directions in the theory of flat surfaces.
\end{abstract}

\maketitle

\setcounter{tocdepth}{1}
\tableofcontents

\section{Introduction}
For $g \ge 1$, let $S_g$ be a closed surface of genus $g$, and let $Z = \{x_1, \dots, x_n \} \subset S_g$ be a set of $n$ points.
Every integer partition $\kappa = (k_1,\dots,k_n)$ of $2g-2$ determines a \emph{Teichm\"uller stratum of translation surfaces} $\Omega \mathcal{T}_{g, n}(\kappa)$.
This is the space of  of \emph{Teichm\"uller equivalence classes} of translation surfaces $(X, \omega)$, where $X$ is a Riemann surface of genus $g$ and $\omega \in H^{1,0}(X)$ is a nontrivial abelian differential, along with a \emph{marking} homeomorphism $f: (S_g, Z) \to (X, \omega)$ such that $\omega$ has a zero of order $k_i$ at the point $f(x_i)$.
We will work with the complex projectivization $\P \Omega \mathcal{T}_{g, n}(\kappa)$ of this space because the objects we study are invariant under the $\C^*$-action on $\Omega \mathcal{T}_{g, n}(\kappa)$.

Every translation surface may be decomposed into a collection of polygons whose opposite sides have been identified by Euclidean translations.
A generic translation surface $(X, \omega)$ admits a canonical such decomposition, where every polygon is a triangle, called the \emph{Delaunay triangulation} of $(X, \omega)$.
This triangulation is dual to the classical Voronoi decomposition of $(X, \omega)$ with respect to the zeros of $\omega$.
The strata $\P \Omega \mathcal{T}_{g, n}(\kappa)$ naturally decompose as a countable union of \textit{iso-Delaunay regions} $\Delta_{\mathcal{D}}(\tau)$, which are subspaces of translation surfaces whose Delaunay triangulations are combinatorially equivalent to $\tau$.
Masur--Smillie first used these regions in \cite{MasurSmillie} to obtain quantitative bounds on geometric quantities such as the lengths of saddle connections and volumes of strata to analyze degenerations of flat structures.

This work concerns the geometry of iso-Delaunay regions in strata of translation surfaces, and it may be viewed as a common generalization of the study of iso-Delaunay regions in two other contexts.
\begin{enumerate}
    \item\label{BowmanVeech}
    Bowman \cite{Bowman} and Veech \cite{VeechBicuspid} proved that Teichm\"uller disks intersect iso-Delaunay regions in convex hyperbolic polygons, which they used to algorithmically compute topological statistics of Teichm\"uller curves.
    \item\label{RivinVeech} Rivin \cite{Rivin} and Veech \cite{VeechDelaunay} proved that the iso-Delaunay regions in moduli spaces of complex-affine structures are convex in angle coordinates, yielding results on the local geometry of those moduli spaces.
\end{enumerate}
The rigid nature of Teichm\"uller disks in (\ref{BowmanVeech}) and the flexible nature of complex-affine structures in (\ref{RivinVeech}) are necessary for these results.
Strata of translation surfaces enjoy neither of these features,
and the structure of iso-Delaunay regions in this setting has remained mysterious.
In particular, no iso-Delaunay regions in $\P\Omega \mathcal T_{g, n}(\kappa)$ for $g \ge 2$ were known to be contractible prior to this work.

In Theorem~\ref{thm:mainA}, we prove that the iso-Delaunay regions for triangulations admitting involutions called \emph{triangle matchings} (Definition~\ref{defn:triangle_matching}) are convex with respect to \emph{angle coordinates} (Definition~\ref{defn:angle_map}), which parametrize the Euclidean angles in the triangles.

\begin{mainthm}[Theorem~\ref{thm:main_theorem_precise}]\label{thm:mainA}
Let $\tau$ be a topological triangulation of $(S_g, Z)$ such that $\Delta_{\mathcal D}(\tau)$ is nonempty.
If $\tau$ admits a triangle matching, then $\Delta_{\mathcal{D}}(\tau)$ is convex with respect to angle coordinates.
In particular, $\Delta_{\mathcal{D}}(\tau)$ is contractible.
\end{mainthm}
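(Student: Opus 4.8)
The plan is to study the \emph{angle map} $\mathcal A\colon \Delta_{\mathcal D}(\tau)\to\R^{C}$, where $C$ is the set of corners of the triangles of $\tau$ and $\mathcal A$ records the Euclidean angle at each corner. Three observations organize the argument. First, the image of $\mathcal A$ lies in the affine subspace $L\subseteq\R^{C}$ cut out by the linear relations ``the three angles of each triangle sum to $\pi$'' and ``the angles of the corners meeting at $x_i$ sum to $2\pi(k_i+1)$''. Second, the empty-circumdisk condition defining Delaunay triangulations translates, edge by edge, into the \emph{linear} inequalities ``the two angles opposite a common edge sum to at most $\pi$'', so the Delaunay locus inside $L$ is a convex polytope $P$ (open, after adjoining the nondegeneracy inequalities $0<\alpha<\pi$); this linearization is exactly what Rivin and Veech exploit in the complex-affine setting. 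Third, $\mathcal A$ is injective: an angle assignment determines each triangle up to scale, hence all edge \emph{directions} up to a single global rotation, and matching the lengths of shared edges then pins down the scales up to one global factor, which disappears upon projectivizing. Granting these, it suffices to prove that $\mathcal A(\Delta_{\mathcal D}(\tau))$ is a convex subset of $P$ and that $\mathcal A$ is a homeomorphism onto it (the latter by invariance of domain once the former is known, as domain and image then have equal dimension); a nonempty convex set is contractible, which gives the ``in particular'' clause.

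The subtlety present here but not in (\ref{RivinVeech}) is that \emph{not} every point of $P$ is realized by a translation surface: because translation surfaces have trivial linear holonomy, developing the link of each vertex imposes a multiplicative ``sine-law'' consistency condition of the form $\prod\sin(\cdot)/\sin(\cdot)=1$, together with analogous conditions around a homology basis, and these are transcendental rather than linear. So a priori $\mathcal A(\Delta_{\mathcal D}(\tau))$ is only $P$ intersected with a transcendental subvariety, with no reason to be convex, and the role of the triangle matching is to defeat this obstruction. I would use the combinatorial involution $\iota$ supplied by the matching to realize every $(X,\omega)\in\Delta_{\mathcal D}(\tau)$ as built from a simpler object, obtained by cutting along, or quotienting by, the geometric involution corresponding to $\iota$, on which the consistency conditions become automatic: on a simply connected piece every angle assignment develops, and around the vertices touched by the fixed locus of a point-reflection-type involution the relevant sine-law products telescope to $1$. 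Concretely, I expect $\Delta_{\mathcal D}(\tau)$ to be identified, compatibly with angle coordinates, with an iso-Delaunay region $\Delta_{\mathcal D}(\bar\tau)$ in a moduli space of half-translation surfaces, hence of complex-affine structures, which is convex by the theorem of Rivin and Veech (the half-translation locus itself being a linear slice of the complex-affine angle polytope); transporting convexity back along this identification exhibits $\mathcal A(\Delta_{\mathcal D}(\tau))$ as a convex polytope, namely a linear slice of $P$.

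The main obstacle is the passage from the \emph{combinatorial} involution $\iota$ of $\tau$ to a \emph{geometric} involution of each surface in $\Delta_{\mathcal D}(\tau)$: a symmetry of the Delaunay triangulation need not a priori be realized by an isometry, so one must argue, from the canonicity and rigidity of Delaunay triangulations together with the defining properties of a triangle matching, that it always is, and then identify the induced map on angle coordinates with the slice described above. Carrying this out, and arranging the bookkeeping so that the Delaunay inequalities and the structural linear constraints visibly correspond on the two sides of the identification, is where the real work lies; the ribbon-graph formalism dual to $\tau$ should be the natural language for it. The remaining points---that $\mathcal A$ is open onto its image, and that the hypothesis that $\Delta_{\mathcal D}(\tau)$ be nonempty and the passage to closures are correctly handled---I expect to be comparatively routine once the identification is in place.
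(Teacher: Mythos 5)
Your proposal takes a genuinely different route from the paper, and the route has a gap precisely where you flag ``the real work lies'': converting the combinatorial involution $\iota$ into a geometric involution of the surface. This conversion is not possible in general. A triangle matching is only an automorphism of the dual ribbon graph satisfying $\iota_*=-1$ on $H_1(\Gamma)$; it does \emph{not} have to lift to a symplectic homeomorphism of $(S_g, Z)$, let alone an isometry of each $(X,\omega)\in\Delta_{\mathcal D}(\tau)$. The paper's Prym example (the origami $X((12)(3)(45),(1)(234)(5))$ in $\Omega\mathcal T_{3,1}(4)$) makes this concrete: its standard triangulation admits a triangle matching (by Theorem~\ref{thm:arboreal_origamis}), but the surface is in a nonhyperelliptic component, so there is no geometric involution acting as $-1$ on all of $H_1$; the one geometric involution it does have, the Prym involution, is explicitly \emph{not} a triangle matching. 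So the plan to quotient by $\iota$ to obtain a half-translation surface and invoke Rivin--Veech cannot get started in general. Even where it could (the hyperelliptic case), the intermediate claim that ``the half-translation locus is a linear slice of the complex-affine angle polytope'' would need independent justification---the constraint $\mathrm{Hol}\in\{\pm 1\}$ is still a priori transcendental, the same kind of obstruction you were trying to remove.

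The paper's resolution sidesteps geometrization entirely and is worth contrasting with yours. After identifying $\Theta(\Delta(\tau))$ with the trivial-holonomy locus $\mathcal V\subset\mathcal A$ (Lemma~\ref{lem:image_of_angle_map}, essentially your injectivity observation), it proves two containments combinatorially. For $\mathcal A^\iota\subseteq\mathcal V$: since $\theta\circ\iota=\theta$ forces $\mathrm{Hol}^\theta(\alpha)=\mathrm{Hol}^\theta(\alpha)^{-1}$ for all $\alpha$, the dilational holonomy in $\R_{>0}$ is trivial and the rotational holonomy lands in $\{\pm1\}$ (Lemma~\ref{lem:order_2_holonomy}); hence $\mathrm{Hol}$ is a continuous map from the connected affine set $\mathcal A^\iota$ into a discrete set, so it is \emph{constant} (Theorem~\ref{thm:constant_holonomy_on_invariants}), and once one point of $\mathcal A^\iota$ has trivial holonomy (guaranteed since $\Delta_{\mathcal D}(\tau)\neq\emptyset$), all do. For $\mathcal V\subseteq\mathcal A^\iota$: the intersection-pairing identity $r(\iota(f,e))=-r(f,e)$ on relative homology (Lemma~\ref{lem:Sam's_favorite_formula}) shows the angle assignment of any actual translation surface is automatically $\iota$-invariant. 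Thus $\mathcal V=\mathcal A^\iota$ exactly, an affine-linear slice, and intersecting with the linear Delaunay inequalities yields a convex set. The discreteness-plus-connectedness trick is the substitute for what you were trying to achieve by passing to a simply connected quotient, and it is what makes the hypothesis ``$\iota_*=-1$ on $H_1$'' do its work without ever needing $\iota$ to act on the surface itself.
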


Furthermore, in \S \ref{sec:examples} we provide explicit constructions of convex iso-Delaunay regions, from which we obtain the following corollaries.

\begin{maincor}[Lemma~\ref{lem:hyperelliptic_delaunay}]\label{cor:hyperelliptic}
All iso-Delaunay regions in hyperelliptic stratum components are convex with respect to angle coordinates.
\end{maincor}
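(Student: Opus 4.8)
The plan is to reduce the statement to Theorem~\ref{thm:mainA} by showing that every triangulation that arises as a Delaunay triangulation of a surface in a hyperelliptic stratum component carries a triangle matching induced by the hyperelliptic involution. Fix such a component $\mathcal C$ — a hyperelliptic component of $\P\Omega\mathcal T_{g, n}(\kappa)$ with $\kappa = (2g-2)$ or $\kappa = (g-1, g-1)$ — and a topological triangulation $\tau$ of $(S_g, Z)$ with $\Delta_{\mathcal D}(\tau) \cap \mathcal C \ne \emptyset$. Choose a marked surface $(X, \omega, f)$ in this intersection and let $\sigma \colon X \to X$ be its hyperelliptic involution. Then $\sigma$ is a biholomorphism with $\sigma^* \omega = -\omega$, so in flat charts $\sigma$ is the map $z \mapsto -z$; in particular $\sigma$ is an orientation-preserving isometry of the flat metric of $(X, \omega)$ whose fixed locus is the finite set of $2g+2$ Weierstrass points, and $\sigma$ permutes the zeros of $\omega$.

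Since the Delaunay triangulation of $(X, \omega)$ is canonically determined by the flat metric and the zero set of $\omega$, the map $\sigma$ carries it to itself; conjugating by the marking $f$ yields a combinatorial automorphism $\iota$ of $\tau$. The crux is to check that $\iota$ is a triangle matching in the sense of Definition~\ref{defn:triangle_matching}. Its compatibility with the incidence structure of $\tau$ and with the orientation is immediate, as $\iota$ is induced by an orientation-preserving homeomorphism. The essential nontrivial point is that $\iota$ fixes no triangle: if $\sigma$ mapped a Delaunay triangle $T$ to itself, then $\sigma|_T$ would be an orientation-preserving self-isometry of the Euclidean triangle $T$ with $(\sigma|_T)^2 = \mathrm{id}$; but a Euclidean triangle has no orientation-preserving self-isometry of order exactly $2$ (its orientation-preserving symmetry group is trivial, or cyclic of order $3$ in the equilateral case), so $\sigma|_T = \mathrm{id}$, contradicting the finiteness of the fixed locus of $\sigma$. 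One should also record $\sigma$'s action on the remaining cells and confirm it matches the local model demanded by Definition~\ref{defn:triangle_matching}: $\sigma$ fixes the unique cone point when $\kappa = (2g-2)$ and interchanges the two cone points when $\kappa = (g-1, g-1)$, and along a Delaunay edge whose midpoint is a Weierstrass point it acts by the half-turn that reverses the edge and swaps the two adjacent triangles. I expect this matching of the $z \mapsto -z$ picture against the combinatorial axioms of a triangle matching to be the only genuine content of the argument and the step requiring care.

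Granting that $\iota$ is a triangle matching, Theorem~\ref{thm:mainA} applies directly: $\Delta_{\mathcal D}(\tau)$ is convex with respect to the angle coordinates of Definition~\ref{defn:angle_map}, and in particular contractible. To see that this really describes the iso-Delaunay regions of $\mathcal C$, observe that $\mathcal C$ is a connected component of its stratum, hence open and closed, whereas $\Delta_{\mathcal D}(\tau)$ is connected (being contractible); since $\Delta_{\mathcal D}(\tau)$ meets $\mathcal C$ it must be contained in $\mathcal C$. As $\tau$ ranges over the triangulations realized by surfaces in $\mathcal C$, this exhausts the iso-Delaunay regions in $\mathcal C$, each of which is therefore convex.
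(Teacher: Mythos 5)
Your overall framework matches the paper's: the hyperelliptic involution $\sigma$ is an isometry of the flat metric, hence preserves the nondegenerate Delaunay triangulation (Remark~\ref{rem:uniqueness}) and induces a simplicial automorphism whose dual action $\iota$ on half-edges you then want to show is a triangle matching, after which Theorem~\ref{thm:main_theorem_precise} applies. Your concluding bookkeeping about $\Delta_{\mathcal D}(\tau)\subseteq\mathcal C$ is also fine. However, your verification that $\iota$ is a triangle matching has a genuine gap: you have misidentified the crux. Definition~\ref{defn:triangle_matching} has two conditions: (i) $\iota$ commutes with the $\Z/3\Z$-action (which, as you correctly say, follows from $\sigma$ being orientation-preserving), and (ii) the induced automorphism of $C_1(\Gamma)$ restricts to $-1$ on $H_1(\Gamma)$. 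You replace (ii) with the claim that $\iota$ is fixed-point-free on triangles. That fact is true, and your Euclidean-geometry argument for it is valid, but it is neither the required condition nor sufficient for it. The paper's Prym Origami example is a concrete counterexample to your criterion: $\iota_{\mathrm{Prym}}$ satisfies $\iota_{\mathrm{Prym}}^*\omega=-\omega$, and your fixed-point-free argument applies to it verbatim, yet it is \emph{not} a triangle matching because it acts as $-1$ only on a factor of $H_1$.

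The actual content of Lemma~\ref{lem:hyperelliptic_top}, which you omit, is deducing (ii) from the defining property of a hyperelliptic involution (acting as $-1$ on $H_1(S_g;\Z)$). The subtlety is that (ii) is about $H_1(\Gamma)\cong H_1(S_g\setminus Z;\Z)$, not $H_1(S_g;\Z)$. When $\#Z=1$ these are isomorphic and (ii) is immediate. When $\#Z=2$, the surjection $H_1(S_g\setminus Z)\twoheadrightarrow H_1(S_g)$ has infinite cyclic kernel generated by a small loop $\beta_1$ about one marked point, and one must separately check $\sigma(\beta_1)=-\beta_1$; this is exactly where the hypothesis that $\sigma$ transposes the two marked points is used (giving $\sigma(\beta_1)=\beta_2=-\beta_1$). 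You do mention in passing that $\sigma$ interchanges the two cone points when $\kappa=(g-1,g-1)$, but you never connect that to condition (ii), which is the heart of the argument.
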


\begin{maincor}[Corollary~\ref{cor:arboreal_in_every_stratum}]\label{cor:every_stratum}
    All components of the projectivized strata $\P\Omega \mathcal{T}_{g, n}(\kappa)$ contain a contractible iso-Delaunay region.
\end{maincor}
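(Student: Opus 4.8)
The plan is to exhibit, in every connected component of every stratum, an explicit translation surface whose Delaunay triangulation carries a triangle matching, and then to quote Theorem~\ref{thm:mainA}. The logical skeleton is short. If $(X, \omega)$ has Delaunay triangulation combinatorially equal to $\tau$ and $\tau$ admits a triangle matching, then $(X, \omega) \in \Delta_{\mathcal{D}}(\tau)$, which by Theorem~\ref{thm:mainA} is convex in angle coordinates, hence contractible; and since iso-Delaunay regions are open in the stratum and, being convex, connected, $\Delta_{\mathcal{D}}(\tau)$ is contained in the connected component of $(X, \omega)$. Moreover, whether the Delaunay triangulation admits a triangle matching depends only on the underlying translation surface and not on the marking, so this property is invariant under the mapping class group; since the components of a moduli stratum are the mapping-class-group orbits of the components of $\P\Omega\mathcal{T}_{g, n}(\kappa)$, it is enough to produce one such surface in each connected component of each moduli stratum $\P\Omega\mathcal{M}_{g, n}(\kappa)$ and then transport it back by a suitable mapping class.

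By the classification of connected components of strata (Kontsevich--Zorich, together with its extension to surfaces with marked points), the components are distinguished by hyperellipticity and, when every $k_i$ is even, by spin parity, apart from a handful of low-genus exceptions. For the hyperelliptic components nothing remains to be done: by Corollary~\ref{cor:hyperelliptic} (Lemma~\ref{lem:hyperelliptic_delaunay}) every surface there has a Delaunay triangulation invariant under the hyperelliptic involution, and this invariance \emph{is} a triangle matching. So the real content is to populate the non-hyperelliptic components, in particular the even- and odd-spin ones, which is the purpose of the \emph{arboreal} constructions of \S\ref{sec:examples}: surfaces assembled from a tree-patterned family of congruent triangles (or pairs of triangles) whose combinatorial reflection across the tree is a triangle matching in the sense of Definition~\ref{defn:triangle_matching}. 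Varying the free parameters of the construction — the shape of the triangles, and the number and placement of triangles around each marked point — one realizes any prescribed partition $\kappa$ and, where it matters, either admissible value of the spin parity.

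For each such surface $(X, \omega)$ two verifications remain, and I expect the second to be the real work. First, one must check that the combinatorial triangulation written down is genuinely the Delaunay triangulation, i.e.\ that the local Delaunay condition holds at every edge (the two angles opposite an edge sum to at most $\pi$); this can be arranged, for instance by taking the triangles close to equilateral or by a small perturbation, and since $\Delta_{\mathcal{D}}(\tau)$ is open it suffices to achieve the strict condition at a single surface. Second, one must check that the prescribed combinatorial involution satisfies the axioms of Definition~\ref{defn:triangle_matching}. The delicate bookkeeping is the computation of the spin parity: one reads a symplectic basis of $H_1$ off the triangulation and evaluates the Arf invariant of the spin structure determined by $\omega$, and one must verify that the arboreal family can be tuned to realize both parities in every even-degree stratum. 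The remaining loose end is completeness — the sporadic components (such as the extra hyperelliptic component in genus $2$, and strata carrying neither a spin nor a hyperelliptic distinction) must be handled by small explicit examples, which is routine once the general construction is in place.
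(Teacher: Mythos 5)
Your strategic outline matches the paper's: handle hyperelliptic components via Corollary~\ref{cor:hyperelliptic}, and populate nonhyperelliptic components with arboreal surfaces whose standard triangulation admits a triangle matching, checking the Delaunay condition by making the triangles (near-)equilateral. But the step you label ``the real work''---tuning the arboreal family to realize every nonhyperelliptic component, in particular both spin parities---is precisely where the proposal stops being a proof. You correctly identify that this is where the content lies, yet you leave it as a sketch of an Arf-invariant computation to be done. The paper does not re-derive this either; it cites an external result: Lemma~6.15 of \cite{CalderonSalter} (equivalently Theorem~1.3 of \cite{Hamenstadt}) already provides explicit arboreal networks which, after the Thurston--Veech construction, realize abelian differentials in every nonhyperelliptic component of $\Omega\mathcal{M}_{g,n}(\kappa)$ for $g \geq 3$; replacing the Thurston--Veech rectangles by unit squares and choosing mapping-class-group lifts yields arboreal origamis in each nonhyperelliptic component of $\Omega\mathcal{T}_{g,n}(\kappa)$, and then Theorem~\ref{thm:arboreal_origamis} supplies the triangle matching. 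Without that citation (or an equivalent explicit family with the parity computation actually carried out), the argument is genuinely incomplete.

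A smaller, cleaner move in the paper worth adopting in place of your ``take the triangles close to equilateral or perturb'': apply the fixed linear map $A = \begin{psmallmatrix} 1 & -1/2 \\ 0 & \sqrt{3}/2 \end{psmallmatrix}$ to any arboreal origami $X(h,v)$. This makes every triangle of the standard triangulation $\tau(h,v)$ literally equilateral, so the triangulation is nondegenerate Delaunay by inspection and $\Delta_{\mathcal D}(\tau(h,v))$ is nonempty, with no perturbation or openness argument needed.
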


\subsection*{Running example: the square L}

\begin{figure}
    \centering
    \includegraphics[width = 0.5\linewidth]{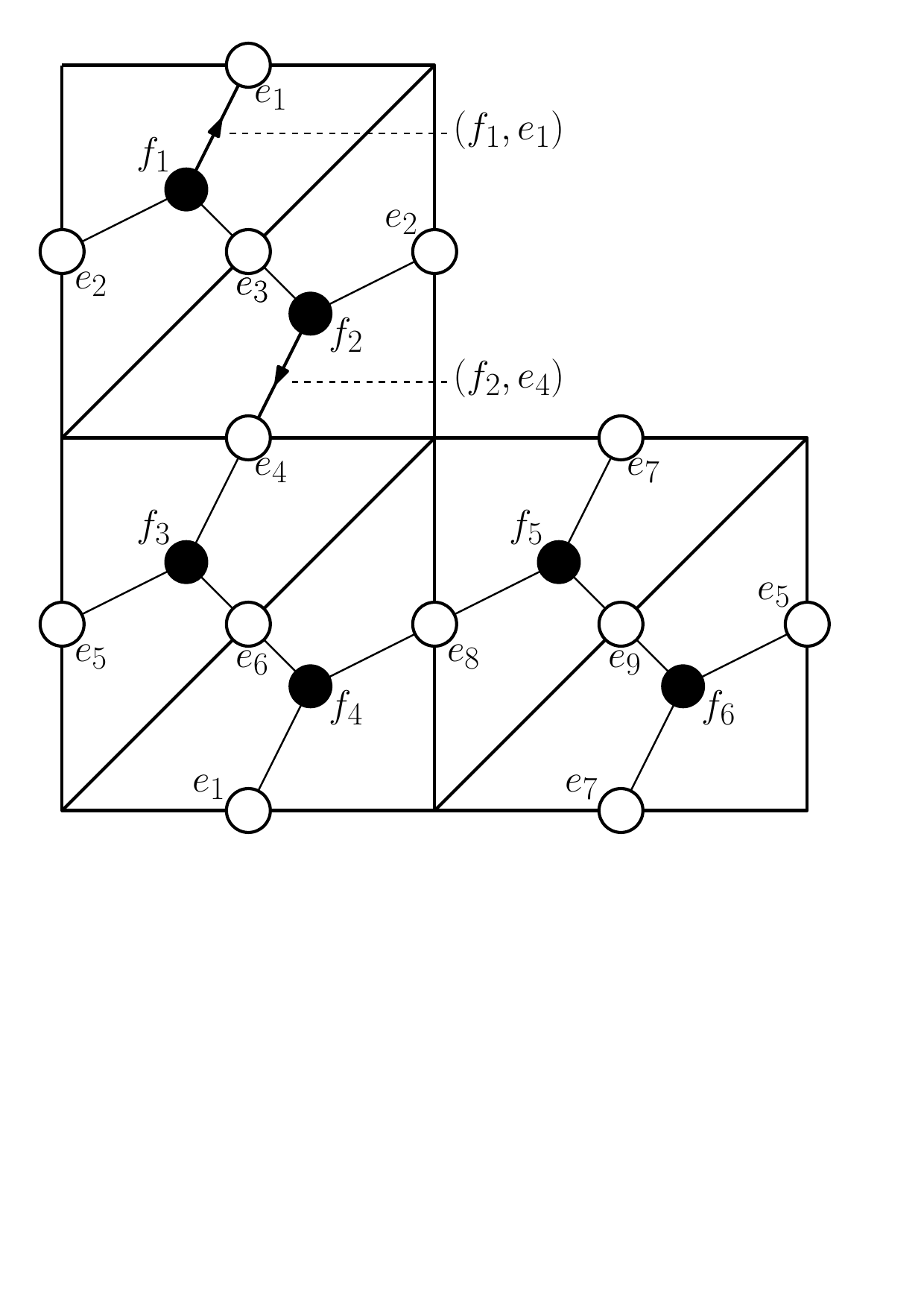}
    \caption{The trivalent ribbon graph $\Gamma(\tau)$ dual to a topological triangulation $\tau$ on the square L.
    Two half-edges $(f_1, e_1)$ and $(f_2, e_4)$ have been labeled with our orientation convention.}
    \label{fig:square_l_combinatorial}
\end{figure}

Consider the L-shaped translation surface $(X, \omega) \in \Omega \mathcal T_{2, 1}(2)$ depicted in Figure~\ref{fig:square_l_combinatorial}.
Figure~\ref{fig:square_l_combinatorial} also depicts the \emph{standard triangulation} $\tau$ of $(X, \omega)$ (Definition~\ref{defn:standard_triangulation}), along with its dual trivalent ribbon graph $\Gamma(\tau)$ (Definition~\ref{defn:dual_graph}).
The square L exhibits the fundamental concepts in this paper:
\begin{itemize}
    \item The ribbon graph $\Gamma(\tau)$ admits a \emph{triangle matching} (Definition~\ref{defn:triangle_matching}), as in Figure~\ref{fig:square_l}.
    \item The image of the \emph{iso-Delaunay region} $\Delta_{\mathcal D}(\tau)$ (Definition~\ref{defn:isodelaunay_region}) under the \emph{angle map} $\Theta$ (Definition~\ref{defn:angle_map}) is convex by Theorem~\ref{thm:main_theorem_precise}.
    \item The triangulation $\tau$ is \emph{compatible} with a hyperelliptic involution $\iota: S_g \to S_g$ (Definition~\ref{hyperelliptic defn}). 
    \item The translation surface $(X, \omega)$ is a \emph{geometrically simple origami} (Definition~\ref{defn:origami}), with $h = (12)$ and $v = (13)$, that is \emph{arboreal} (Definition~\ref{defn:arboreal}).
\end{itemize}

\subsection*{Outline}
An iso-Delaunay region is defined by \emph{holonomy equations} and \emph{Delaunay inequalities} on triangle angles; a crucial difficulty is that while the latter are affine-linear, the former are transcendental and opaque.
Our key insight is that when $\tau$ admits a triangle matching $\iota: H \to H$ on the \emph{half-edges} $H$ of $\Gamma(\tau)$, these constraints reduces to an affine-linear system.

We first develop in \S\S\ref{sec:translation_surfaces}-\ref{sec:ribbon_graphs} the necessary background on translation surfaces and trivalent ribbon graphs, respectively.
In \S\ref{sec:complex_affine}, we define the \emph{angle coordinate map} $\Theta$ on the \emph{iso-triangulable region} $\Delta = \Delta(\tau)$, and we show that its image $\Theta(\Delta)$ is the locus of trivial holonomy in \emph{angle space} $\mathcal{A}$ (Lemma~\ref{lem:image_of_angle_map}).
Next, in \S\ref{sec:triangle_matchings} we introduce triangle matchings, identify the affine-linear $\iota$-invariant subspace $\mathcal A^\iota \subset \mathcal{A}$, and prove in \S\ref{sec:proof_of_main_theorem} that $\Theta(\Delta) = \mathcal A^\iota$ (Theorem~\ref{thm:main_theorem_precise}). The affine-linearity of the Delaunay inequalities in angle coordinates implies the convexity of $\Delta_{\mathcal D}$.
Finally, in \S\ref{sec:examples} we construct examples of convex iso-Delaunay regions: those containing hyperelliptic surfaces (\S\ref{subsec:hyperelliptic}), sums of surfaces (\S\ref{subsec:connected_sum}) and \emph{arboreal origamis} (\S\ref{subsec:arboreal}).
In \S\ref{sec:future_directions}, we compare our results with the study $L^\infty$-Delaunay triangulations and the topology of strata.

\subsection*{Acknowledgements}
The authors thank Aaron Calderon for the suggestion to consider hyperelliptic components and Alex Eskin, Simion Filip, and Eduard Looijenga for helpful discussions.
The authors thank Curt McMullen for comments on a previous draft.
The first author thanks Jeremy Kahn and the second author thanks Alex Wright for suggesting the study of iso-Delaunay regions that led to this work.
The first author thanks Vincent Delecroix and Julien R\"uth for support with \texttt{sage-flatsurf}.
The second author was supported by the NSF grant DMS-2136217.

\newpage
\section{Triangulations of translation surfaces}\label{sec:translation_surfaces}
In this section, we establish our notation and recall basic material on translation surfaces and their triangulations.
In particular, we define the iso-triangulable region $\Delta(\tau)$ and iso-Delaunay region $\Delta_{\mathcal D}(\tau)$ associated to a triangulation $\tau$ of a surface.
A basic reference on translation surfaces is \cite{AthreyaMasur}.

\begin{defn}\label{defn:translation_surface}
A \emph{marked translation surface} $(X, \omega)$ \emph{with zero multiplicities} $\kappa$ \emph{on} $(S_g, Z)$ is a translation surface along with an orientation-preserving diffeomorphism (a \emph{marking}) $\phi: S_g \to X$ such that $\omega$ vanishes to order $k_j$ at each $\phi(x_j)$ for $x_j \in Z$. When $k_j = 0$, the differential $\omega$ is nonvanishing at $\phi(x_j)$, and we say that $\phi(x_j)$ is a \emph{marked point} of $\omega$; otherwise, we say that $\phi(x_j)$ is a \emph{singularity} of $\omega$.
\end{defn}

\begin{defn}\label{defn:stratum}
The \emph{Teichm\"uller stratum} $\Omega \mathcal T_{g,n}(\kappa)$ is the space of equivalence classes $M = [(X, \omega)]$ of marked translation surfaces $(X, \omega)$ on $(S_g, Z)$ under the following equivalence relation.
Two marked translation surfaces $(X,\omega)$ and $(X', \omega')$ (with marking diffeomorphisms $\phi$ and $\phi'$) are \emph{Teichm\"uller equivalent} if there is a biholomorphism $\psi: X \to X'$ with $\psi^* \omega' = \omega$ such that $\psi \circ \phi$ is isotopic rel $Z$ to $\phi'$.
\end{defn}

\begin{rem}
Given a marked translation surface $(X, \omega)$ on $(S_g, Z)$ with marking $\phi$, we obtain a Riemann surface structure on $S_g$ by precomposing coordinate charts $X \supset U \to \C$ with the marking.
In this way, we may regard $X$ as a Riemann surface structure on $S_g$ itself, and $\omega$ as a differential form on $S_g$ with zeroes in $Z$.
We therefore assume without loss of generality that $(S_g, Z)$ is the base topological surface of every marked translation surface that we consider.
\end{rem}

For each $\gamma \in H_1(S_g, Z; \Z)$, we have a \emph{period map} $\Phi_\gamma: \Omega \mathcal T_{g,n}(\kappa) \to \C$ given by $[(X, \omega)] \mapsto \int_\gamma \omega$, which is well-defined because the class $\gamma$ is invariant under isotopy rel $Z$.
The topology on the space $\Omega \mathcal T_{g,n}(\kappa)$ is the weakest topology that makes the maps $\Phi_\gamma$ continuous.
Given a basis $\gamma_1, \dots, \gamma_{2g + n - 1}$ of $H_1(S_g, Z; \Z)$, it is a basic fact of Teichm\"uller theory that the map $\Phi = \prod_{j=1}^{2g + n - 1} \Phi_{\gamma_j}: \Omega \mathcal T_{g,n}(\kappa) \to \C^{2g + n -1}$ is a local homeomorphism, called the \emph{period coordinate map} determined by the basis $\gamma_1, \dots, \gamma_{2g + n - 1}$.

\begin{defn}
The group $\C^*$ acts on $\Omega \mathcal T_{g,n}(\kappa)$ by rescaling the differential: for $M = [(X, \omega)] \in \Omega \mathcal T_{g,n}(\kappa)$, we have $\lambda M = [(M, \lambda \omega)]$ for every $\lambda \in \C^*$.
We define $\P\Omega \mathcal T_{g,n}(\kappa)$ as the quotient of $\Omega \mathcal T_{g,n}(\kappa)$ by this action.
\end{defn}

Note that there is a natural injective map $\mathbb{P} \Omega \mathcal{T}_{g, n}(\kappa) \to \mathcal{T}_{g, n}$ by forgetting the abelian differential.

\begin{rem}
Given a translation surface $(X,\omega)$ on $(S_g, Z)$ with multiplicities $\kappa$, there is a flat incomplete Riemannian metric on $S_g \setminus Z$ induced by coordinate charts $\varphi: U \to \C$ that satisfy $\varphi^*dz = \omega$.
This is the \emph{Euclidean metric} on $(X, \omega)$.
At each point of $x_j \in Z$, there is a homeomorphism of a neighborhood of $x_j$ to a neighborhood of the vertex of a Euclidean cone with cone angle $2\pi (k_j + 1)$ that is an isometry away from $x_j$.
We may therefore speak of, for example, line segments and Euclidean polygons on $(X, \omega)$.
\end{rem}

\begin{defn}
A \emph{topological triangulation} $\tau$ of $(S_g, Z)$ is a CW decomposition of $S_g$ such that $Z$ is the set of 0-cells, and the resulting complex is a simplicial complex. We will write $V(\tau) = Z, E(\tau), F(\tau)$ for the sets of vertices, edges, and faces, respectively, of $\tau$.
\end{defn}

\begin{defn}
A \emph{geometric triangulation} of a translation surface $(X,\omega)$ on $(S_g, Z)$ is a topological triangulation $\tau$ such that each $f \in F(\tau)$ is a Euclidean triangle on $(X, \omega)$.
\end{defn}

\begin{defn}\label{isotriangulable defn}
Let $\tau$ be a topological triangulation.
The \emph{iso-triangulable region} $\Tilde\Delta(\tau) \subset \Omega \mathcal T_{g,n}(\kappa)$ is the set of all Teichm\"uller equivalence classes $M = [(X, \omega)] \in \Omega \mathcal T_{g,n}(\kappa)$ where $(X,\omega)$ a marked translation surface for which $\tau$ is a geometric triangulation.
As $\Tilde \Delta(\tau)$ is invariant under the action of $\C^*$, there is a well-defined projectivized \emph{iso-triangulable region} $\Delta(\tau) \subset \P \Omega \mathcal T_{g,n}(\kappa)$.
\end{defn}

We now discuss Delaunay triangulations, adapting the presentation in \cite{BobenkoSpringborn}.
\begin{figure}
    \centering
    \includegraphics[width=0.5\linewidth]{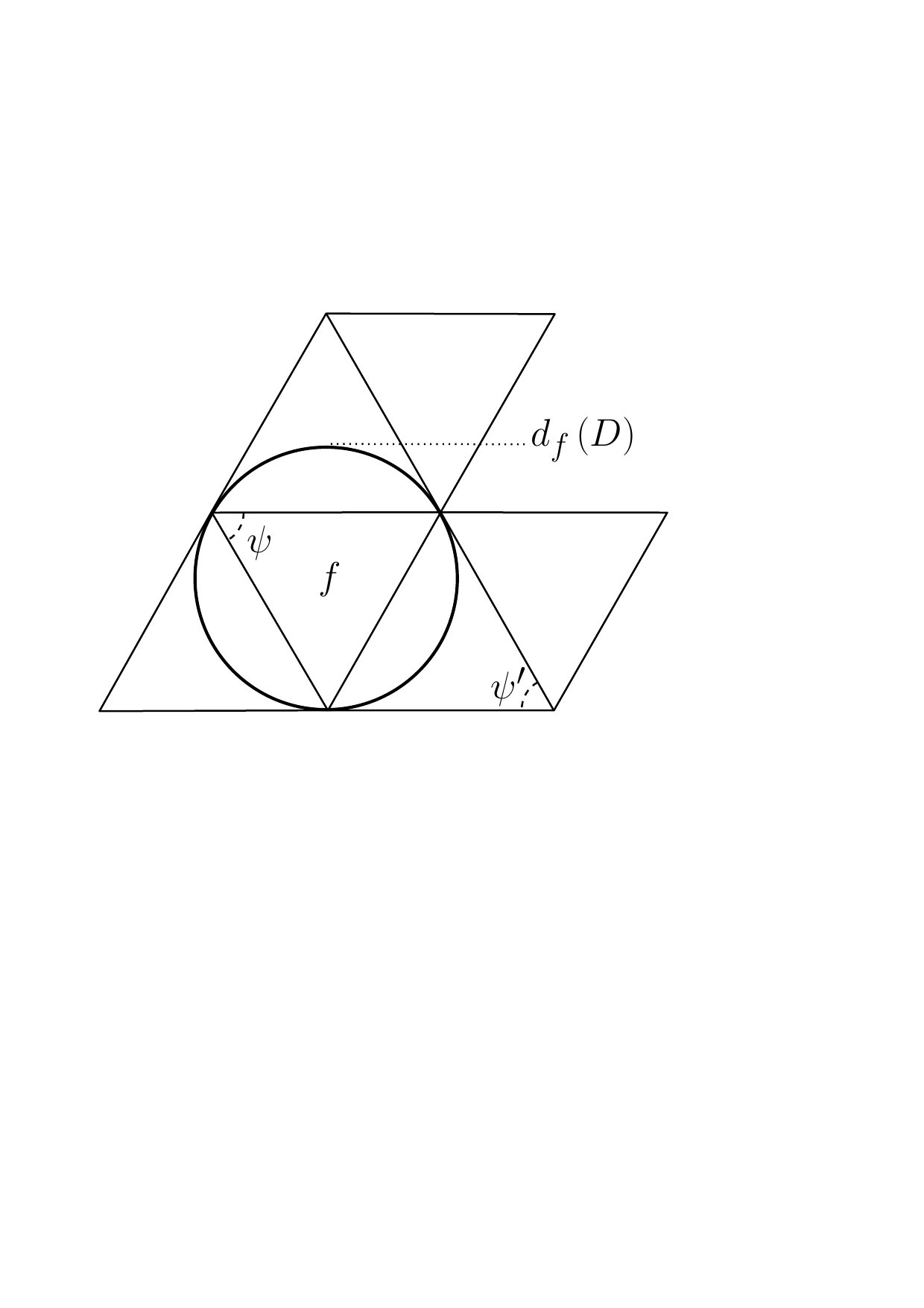}
    \caption{A Delaunay triangulation of an equilaterally-tiled translation surface. Corollary~\ref{cor:delaunay_criterion} implies that $\psi + \psi' < \pi$.}
    \label{fig:delaunay}
\end{figure}

\begin{defn}
Let $(X, \omega)$ be a translation surface.
An \emph{immersed empty disk} is a continuous map $d : \overline{D} \to X$, where $D \subset \mathbb{R}^2$ is an open Euclidean disk such that $d(D) \cap Z = \emptyset$, and $d|_D$ is an isometric immersion with respect to the Euclidean metric on $(X, \omega)$.
\end{defn}

\begin{defn}\label{defn:delaunay_triangulation}
A \emph{Delaunay triangulation} $\tau$ of $(X,\omega)$ is a geometric triangulation such that for each face $f \in F(\tau)$, there is an immersed empty disk $d_f : \overline{D} \to X$ such that there are three points $p_1, p_2, p_3 \in d_f^{-1}(Z)$ such that $f$ is the image of the Euclidean triangle with vertices $\{p_1, p_2, p_3\}$. See Figure~\ref{fig:delaunay}.

There might be more than three points in $d_f^{-1}(Z)$.
If for every $f \in F(\tau)$ there are exactly three points in $d_f^{-1}(Z)$, we say that the Delaunay triangulation $\tau$ is \emph{nondegenerate}.
\end{defn}

\begin{rem}\label{rem:uniqueness}
If $(X, \omega)$ admits a nondegenerate Delaunay triangulation $\tau$, then $\tau$ is the unique Delaunay triangulation of $(X, \omega)$. Hence, in this case, $\tau$ is uniquely determined by the metric structure of $(X, \omega)$.
See the discussion in \cite[\S 2]{BobenkoSpringborn}.
\end{rem}

\begin{defn}\label{defn:isodelaunay_region}
Let $\tau$ be a topological triangulation of $(S_g, Z)$.
The \emph{iso-Delaunay region} $\Delta_{\mathcal D}(\tau) \subset \Delta(\tau)$ is the subset of all Teichm\"uller equivalence classes $M = [(X, \omega)] \in \Delta(\tau)$ where $(X, \omega)$ is a marked translation surface for which $\tau$ is a nondegenerate Delaunay triangulation.
\end{defn}

Note that both $\Delta(\tau)$ and $\Delta_{\mathcal D}(\tau)$ are open subsets of $\mathbb{P}\Omega\mathcal{T}_{g, n}(\kappa)$.

\section{Ribbon graphs}\label{sec:ribbon_graphs}
In \S\ref{subsec:combinatorial_definitions}, we establish our notation for trivalent ribbon graphs, which will be the basic combinatorial framework for this paper.
In \S\ref{subsec:combinatorial_holonomy}, we describe the combinatorial analogue to the holonomy of a complex-affine structure, which will be used as a bridge between the combinatorial and geometric aspects of triangulations of translation surfaces.

\subsection{Basic definitions}\label{subsec:combinatorial_definitions}
\begin{defn}\label{defn:trivalent_ribbon_graph}
    Let $E$ and $F$ be finite sets.
    A graph $\Gamma$ with vertex set $E \cup F$ and edge set $H$ is a \emph{trivalent ribbon graph} if
\begin{enumerate}
    \item The vertices $e \in E$ have valence 2, and the vertices $f \in F$ have valence 3.
    \item There are \emph{endpoint maps} $\e:H \to E$ and $\phi:H \to F$ such $e = \e(h)$ and $f = \phi(h)$ are the vertices of $h$. We write $h = (f, e)$.
    \item There is a fixed-point-free action $\Z/3\Z \curvearrowright H_f$ on each fiber $H_f = \phi\1(f)$ and therefore an action $\Z/3\Z  \curvearrowright H$.
    Given $n \in \Z/3\Z$ and $h = (f, e) \in H_f$, we write $(f, e + n)$ for the image of $h$ under the action of $n$.
\end{enumerate}
We refer to the edges $(f, e)$ of $\Gamma$ as \emph{half-edges}.
\end{defn}

\begin{defn}
For each half-edge $(f, e) \in H$, we define the \emph{combinatorial angle} as the set
\[
(f; e, e+1) \coloneqq \{(f, e), (f, e + 1)\}.
\]
Let $A$ denote the set of all combinatorial angles endowed with the $(\Z/3\Z)$-action $(f; e, e+1) + n \coloneqq (f; e + n, e + n + 1)$.
We refer to $\Z \la A \ra$ as the \emph{angle group} $\Z\la A \ra$, and we write $(f; e + 1, e) \coloneqq -(f; e, e+1) \in \Z\la A \ra$.
\end{defn}

\begin{figure}
    \centering
    \includegraphics[width=0.6\linewidth]{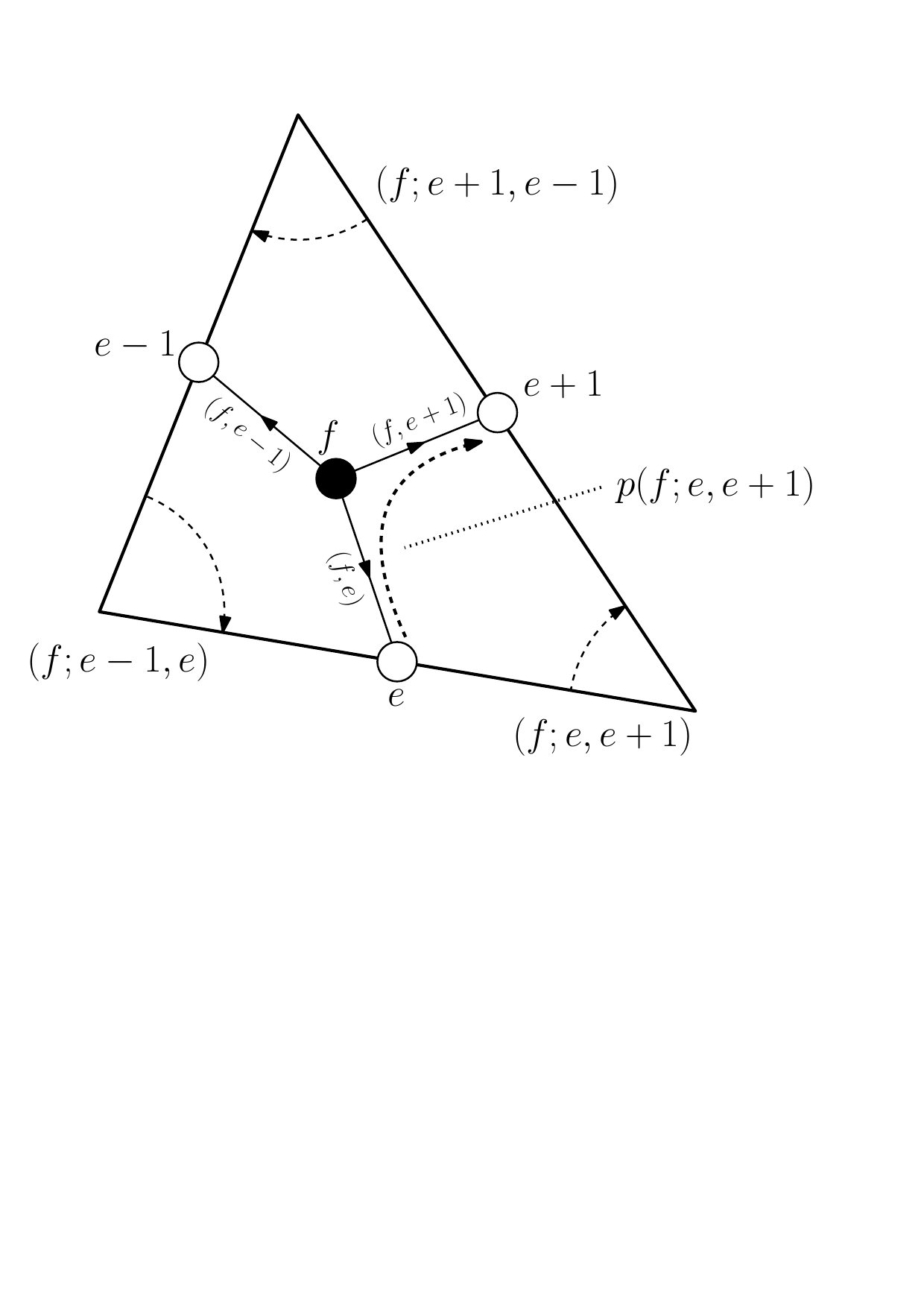}
    \caption{A face $f$ of a topological triangulation $\tau$, along with the dual graph $\Gamma(\tau)$ and the combinatorial angles $A(\tau)$.}
    \label{fig:triangle_schematic}
\end{figure}

\begin{defn}\label{defn:dual_graph}
Given a topological triangulation $\tau$ of $(S_g, Z)$, the \emph{dual graph} $\Gamma(\tau)$ is the trivalent ribbon graph with vertices $E(\tau) \cup F(\tau)$ and a half-edge $(f, e) \in H(\tau)$ from $f \in F$ to $e \in E$ if and only if $e$ is an edge of $f$.
The action of $\Z/3\Z$ on each $H_f$ is defined so that $(f, e + 1)$ is the edge of $f$ which is the successor of $(f, e)$ in the counterclockwise cyclic ordering on the edges of $f$.
We also let $A(\tau)$ denote the set of combinatorial angles of $\Gamma(\tau)$.
See Figure \ref{fig:triangle_schematic}.

For any trivalent ribbon graph $\Gamma$, it is easy to construct a unique topological triangulation $\tau(\Gamma)$ of some closed surface such that $\Gamma = \Gamma(\tau(\Gamma))$.
\end{defn}

\begin{rem}\label{rem:deformation_retraction}
As is implicit in Figure \ref{fig:triangle_schematic}, the graph $\Gamma(\tau)$ naturally embeds into $S_g \setminus Z$, and there is a deformation retraction $S_g \setminus Z \twoheadrightarrow \Gamma(\tau)$.
\end{rem}

The group of simplicial $0$-chains $C_0(\Gamma)$ is the free abelian group $\Z\la E \cup F \ra = \Z \la E \ra \oplus \Z \la F \ra$, and the group of simplicial $1$-chains $C_1(\Gamma)$ is the free abelian group $\Z \la H \ra$.
The following definition establishes our convention for the orientation of half-edges; in particular, we think of the half-edge $(f, e)$ as being directed from $f$ to $e$ (see $(f_1, e_1)$ and $(f_2, e_4)$ in Figure~\ref{fig:square_l_combinatorial}.)

\begin{defn}
We define the boundary map $\dee: C_1(\Gamma) \to C_0(\Gamma)$ by $\dee(f, e) = e - f$. Let us write $(e, f) \coloneqq -(f, e) \in C_1(\Gamma)$. We write $Z_1(\Gamma) = \ker \dee$, the group of simplicial 1-cycles.
\end{defn}

\begin{rem}
Since $\Gamma$ is 1-dimensional, no 1-cycle is a nontrivial boundary, and hence $H_1(\Gamma; \Z) = Z_1(\Gamma) \leq C_1(\Gamma)$.
\end{rem}

\begin{defn}
A \emph{simple cycle} in $\Gamma$ is an element $\alpha \in C_1(\Gamma)$ of the form
\[ \alpha = \sum_{j = 1}^\ell \left((e_{j-1}, f_j) + (f_j, e_j)\right), \]
where all $e_j$ and $f_j$ are distinct and the subscripts are considered modulo $\ell$.
\end{defn}

Note that no simple cycle is a nonunit multiple of any other, and a simple cycle may be expressible as a sum of other simple cycles. The following lemma is standard.

\begin{lem}
The subspace $H_1(\Gamma) \leq C_1(\Gamma)$ is spanned by the simple cycles in $\Gamma$.
\end{lem}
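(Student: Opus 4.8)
The plan is to use the identification $H_1(\Gamma) = Z_1(\Gamma) = \ker \dee$ from the preceding remark, and then to show that $\ker\dee$ is spanned by simple cycles by a standard integer-flow argument on $\Gamma$.

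First I would verify that every simple cycle lies in $Z_1(\Gamma)$. For $\alpha = \sum_{j=1}^\ell \big((e_{j-1}, f_j) + (f_j, e_j)\big)$, applying $\dee$ and using $\dee(f,e) = e - f$ together with $(e_{j-1}, f_j) = -(f_j, e_{j-1})$ gives the telescoping sum
\[ \dee\alpha = \sum_{j=1}^\ell \big((f_j - e_{j-1}) + (e_j - f_j)\big) = \sum_{j=1}^\ell (e_j - e_{j-1}) = 0, \]
since the subscripts are read modulo $\ell$. Hence the span of the simple cycles is contained in $H_1(\Gamma)$, and it remains to prove the reverse containment.

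For that, I would interpret a $1$-chain $z = \sum_{h \in H} z_h\, h \in C_1(\Gamma)$, with $z_h \in \Z$, as an integer flow on $\Gamma$: along a half-edge $h = (f,e)$ with $z_h > 0$, send flow $z_h$ from $f$ to $e$, and with $z_h < 0$, send flow $|z_h|$ from $e$ to $f$. Then the condition $\dee z = 0$ is exactly conservation of flow, in-flow equal to out-flow, at every vertex, both at the valence-$2$ vertices in $E$ and at the valence-$3$ vertices in $F$. I would then induct on $\|z\| \coloneqq \sum_{h \in H} |z_h|$. The base case $\|z\| = 0$ is the empty sum of simple cycles. For the inductive step, suppose $z \ne 0$, so some half-edge carries positive flow, and pick a vertex $v_0$ with positive out-flow. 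By conservation the endpoint $v_1$ of an out-flow half-edge at $v_0$ again has positive out-flow, and iterating produces a walk $v_0, v_1, v_2, \dots$ each of whose steps traverses a half-edge carrying positive flow in the direction of the step. Since $E \cup F$ is finite, a vertex repeats, so there is a closed walk along positive-flow half-edges; choose one, $\gamma$, of minimal length. Minimality forces $\gamma$ to traverse each vertex at most once, so, using that $\Gamma$ is bipartite between $E$ and $F$, it alternates as $e_0, f_1, e_1, \dots, e_{\ell-1}, f_\ell, e_0$ and determines a simple cycle $\alpha = \sum_{j=1}^\ell \big((e_{j-1}, f_j) + (f_j, e_j)\big)$ whose half-edges appear with exactly the signs of the flow along $\gamma$.

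Finally, set $z' \coloneqq z - \alpha$. Then $\dee z' = \dee z - \dee\alpha = 0$, so $z'$ is again a cycle, and since each half-edge of $\alpha$ carried flow of absolute value at least $1$ in the direction in which $\alpha$ traverses it, subtracting $\alpha$ lowers the absolute value of each of those coefficients of $z$ by exactly $1$ and alters no others, whence $\|z'\| = \|z\| - 2\ell < \|z\|$. By the inductive hypothesis $z'$ is a $\Z$-linear combination of simple cycles, and therefore so is $z = z' + \alpha$. The one step that requires genuine care is the extraction of $\alpha$ from the flow: one must check that the walk can always be extended — flow conservation supplies an out-flow half-edge at every vertex reached with positive in-flow — and that a minimal closed positive-flow walk really is a simple cycle in the sense of the definition, in particular that it alternates between $E$- and $F$-vertices and that the degenerate case $\ell = 1$, corresponding to a double edge of $\Gamma$, is genuinely allowed. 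Everything else is the routine telescoping identity above or a direct appeal to the induction hypothesis.
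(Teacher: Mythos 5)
Your proof is correct. The paper offers no proof of this lemma, remarking only that it is standard, and the integer-flow decomposition you give is a legitimate instance of that standard argument: the telescoping computation showing simple cycles lie in $Z_1(\Gamma) = H_1(\Gamma)$, the conservation-of-flow reading of $\dee z = 0$, the extraction of a minimal positive-flow closed walk, and the $\|z'\| = \|z\| - 2\ell$ descent are all in order, and your care about bipartiteness of $\Gamma$ between $E$- and $F$-vertices and about the degenerate double-edge case is appropriate (though for $\Gamma(\tau)$ with $\tau$ a genuine simplicial triangulation of a closed surface, double edges cannot occur, so that case only matters for the abstract trivalent ribbon graphs of Definition~\ref{defn:trivalent_ribbon_graph}).
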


\subsection{Combinatorial holonomy}\label{subsec:combinatorial_holonomy}
In this subsection, we develop notation to express the holonomy formulas of \cite{Rivin} in the context of ribbon graphs.

\begin{defn}
An \emph{angle assignment} is a function $\theta: A \to (0, \pi)$ such that 
\[
\sum_{n \in \Z/3\Z} \theta(f; e + n, e + n + 1) = \pi.
\]
The \emph{angle space} $\mathcal A$ for $\Gamma$ is the set of all angle assignments $\theta: A \to (0,\pi)$.
\end{defn}

Note that an angle assignment $\theta$ extends naturally to a homomorphism $\Z\la A \ra \to \R$. In particular, we may write $\theta(f; e + 1, e) = -\theta(f; e, e + 1)$.
The following definition is inspired by \cite[Definition 6.6]{Rivin}.

\begin{defn}\label{contribution defn}
Given $\theta \in \mathcal A$, we define the following homomorphisms. The \emph{rotational contribution to holonomy} is the homomorphism
\begin{align*}
R^\theta: \Z\la A \ra &\to S^1 \subset \C^* \\
(f; e, e + 1) &\mapsto \exp(i\cdot\theta(f; e, e + 1)).
\end{align*}
The \emph{dilational contribution to holonomy} is the homomorphism
\begin{align*}
D^\theta: \Z\la A \ra &\to \R_{>0} \subset \C^* \\
(f; e, e + 1) &\mapsto \frac{\sin \theta(f; e + 1, e - 1)}{\sin \theta(f; e - 1, e)}.
\end{align*}
\end{defn}
These homomorphisms are defined to make Lemma~\ref{lem:computing_derivative} true. The following two definitions give formal relationships between homology cycles, combinatorial angles, and simplicial $1$-chains.
To illustrate this relationship, see the path labeled $p(f; e, e+1)$ in Figure~\ref{fig:schematic}.

\begin{defn}
    As $H_1(\Gamma)$ is generated by simple cycles, we define the homomorphism
    \[ \phi: H_1(\Gamma) \to \Z\langle A \rangle \]
    by extending the following action on simple cycles by linearity:
    \[ \phi\left(\sum_{j = 1}^\ell \left((e_{j-1}, f_j) + (f_j, e_j)\right)\right) \coloneqq \sum_{j = 1}^\ell (f_j; e_{j - 1}, e_j). \]
\end{defn}

\begin{defn}
    We define a homormorphism \begin{align*}
    p: \Z\la A \ra &\to C_1(\Gamma) \\
    (f; e, e+1) &\mapsto (f, e + 1) - (f, e)
    \end{align*}
    that sends an angle to its incident half-edges.
\end{defn}
The next lemma tells us that every cycle in $\Gamma$ can be understood as a sum of combinatorial angles.
\begin{lem}
    The homomorphism $\phi$ is injective.
\end{lem}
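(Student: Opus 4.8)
The plan is to construct an explicit left inverse of $\phi$ out of the chain-level map $p$. Specifically, I claim that the composite
\[
p \circ \phi : H_1(\Gamma) \longrightarrow C_1(\Gamma)
\]
is simply the inclusion $H_1(\Gamma) = Z_1(\Gamma) \hookrightarrow C_1(\Gamma)$. Granting this, the lemma follows at once: a group homomorphism that admits a left inverse is injective, and the inclusion $Z_1(\Gamma) \hookrightarrow C_1(\Gamma)$ is injective, so $\phi$ must be as well.

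To verify the claim, I would note that $p \circ \phi$ and the inclusion are both homomorphisms $H_1(\Gamma) \to C_1(\Gamma)$ (using that $\phi$ is a homomorphism by its defining property), so it suffices to check that they agree on a generating set of $H_1(\Gamma)$. By the lemma stating that $H_1(\Gamma)$ is spanned by the simple cycles of $\Gamma$, it is enough to show $p(\phi(\alpha)) = \alpha$ for a single simple cycle
\[
\alpha = \sum_{j=1}^\ell \bigl((e_{j-1}, f_j) + (f_j, e_j)\bigr).
\]
Unwinding the definition of $\phi$ gives $\phi(\alpha) = \sum_{j=1}^\ell (f_j; e_{j-1}, e_j)$, so the computation reduces to evaluating $p(f_j; e_{j-1}, e_j)$ for each $j$ and recognizing the outcome as $(e_{j-1}, f_j) + (f_j, e_j)$.

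The one step needing attention is the orientation bookkeeping inside this evaluation. Since $e_{j-1}$ and $e_j$ are distinct (part of the definition of a simple cycle) and both incident to the trivalent vertex $f_j$, they are two of the three edges of $f_j$; hence in the cyclic $\Z/3\Z$-ordering at $f_j$ either $e_j$ is the successor of $e_{j-1}$ or $e_{j-1}$ is the successor of $e_j$, and accordingly $(f_j; e_{j-1}, e_j)$ is the generator $(f_j; e_{j-1}, e_{j-1}+1)$ in the first case and its negative $-(f_j; e_j, e_j+1)$ in the second. In both cases, applying $p$ and using $p(f; e+1, e) = -p(f; e, e+1)$, one obtains
\[
p(f_j; e_{j-1}, e_j) = (f_j, e_j) - (f_j, e_{j-1}) = (f_j, e_j) + (e_{j-1}, f_j).
\]
Summing over $j$ yields $p(\phi(\alpha)) = \alpha$, as desired.

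I do not expect a serious obstacle: the argument is a formal unwinding of the definitions of $\phi$, $p$, and simple cycles, together with the spanning statement for $H_1(\Gamma)$. The only place to be careful is the two-case sign analysis above --- confirming that both the ``$e_j$ follows $e_{j-1}$'' and the ``$e_{j-1}$ follows $e_j$'' case land on $(f_j, e_j) - (f_j, e_{j-1})$ --- and, more pedantically, noting that the distinctness clause in the definition of a simple cycle is precisely what makes the combinatorial angle $(f_j; e_{j-1}, e_j)$ meaningful in the first place. I would also flag that this left-inverse computation does not by itself re-prove that $\phi$ is well defined, since $p$ has nontrivial kernel coming from the three-term triangle relations at each face; well-definedness is part of the cited definition and I take it as given.
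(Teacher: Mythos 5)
Your proposal is correct and follows essentially the same route as the paper: showing that $p$ is a left inverse to $\phi$ by direct computation on simple cycles. The only difference is that you spell out the two-case sign analysis for $(f_j; e_{j-1}, e_j)$, which the paper treats as immediate from the notation $(f; e+1, e) \coloneqq -(f; e, e+1)$.
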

\begin{proof}
    If $\alpha = \sum_{j = 1}^\ell \left((e_{j-1}, f_j) + (f_j, e_j)\right)$
    is a simple cycle in $H_1(\Gamma)$, observe that
    \[ \alpha = \sum_{j = 1}^\ell \left((f_j, e_j) - (f_j, e_{j - 1})\right) = \sum_{j = 1}^\ell p(f_j; e_{j - 1}, e_j) = p\left(\phi(\alpha)\right), \]
    where arithmetic in the subscript $j$ is understood modulo $\ell$.
    In other words, $p$ is left inverse to $\phi$, and $\phi$ is injective.
\end{proof}

\begin{defn}\label{combinatorial holonomy}
We define the \emph{combinatorial rotational, dilational, and total holonomy} homomorphisms $H_1(\Gamma) \to \C^*$, respectively, as follows:
\[ \mathrm{Hol}^\theta_D = D^\theta \circ \phi,\quad \mathrm{Hol}^\theta_R = R^\theta \circ \phi,\quad \mathrm{Hol}^\theta = \mathrm{Hol}^\theta_D \cdot \mathrm{Hol}^\theta_R. \]
\end{defn}

The homomorphisms of this subsection fit into a commutative diagram:
\begin{center}
\begin{tikzcd}
H_1(\Gamma) \arrow[r, "\phi"', hook] \arrow[rd, hook] \arrow[rr, "\Hol^\theta", bend left] & \Z\la A \ra \arrow[d, "p"] \arrow[r, "D^\theta \cdot R^\theta"'] & \C^* \\
& C_1(\Gamma)  &   
\end{tikzcd}
\end{center}

\section{Complex-affine structures}\label{sec:complex_affine}
In this section, we develop geometric analogues to the combinatorial notions developed in \S\ref{sec:ribbon_graphs}.
In Definition~\ref{defn:induced_c_aff_struct}, we define a complex-affine structure on $S_g \setminus Z$ determined by an angle assignment $\theta: A \to (0, \pi)$.
In \S\ref{subsec:affine_holonomy}, we describe the relationship between its complex-affine holonomy and combinatorial holonomy.
In \S\ref{subsec:angle_map}, we establish precise notation to express the well-known correspondence between translation surfaces and complex-affine structures with trivial holonomy.

Let $\tau$ be a triangulation of $(S_g, Z)$, and let $\Gamma = \Gamma(\tau)$ be its dual graph.
Given $\theta \in \mathcal A$, we will define a complex-affine structure on $S_g \setminus Z$ with an associated flat connection $\nabla(\theta)$ on the tangent bundle $T(S_g \setminus Z)$.
\begin{defn}\label{defn:induced_c_aff_struct}
For each $(f,e) \in H$, let
\[ \phi_{(f,e)}^\theta: f \to \{z \in \C \st \Im(z) \geq 0\} \]
be the continuous orientation-preserving map whose image is the Euclidean triangle with angles $\{\theta(f; e + n, e + n + 1) \st 0 \le n \le 2\}$ such that $\phi_{(f,e)}(e)$ is the line segment $[0,1]$.

Now consider the combinatorial angle $(f; e, e') \in \Z\la A \ra$, and suppose that the edge $e'$ belongs to the faces $f$ and $f'$.
There is a unique complex-affine transformation
\[ \lambda_{(f; e, e')}^\theta: \C \to \C \]
that takes the edge $\phi_{(f, e)}^\theta(e')$ onto the edge $\phi_{(f', e')}^\theta(e')$ in an orientation-reversing manner.
(See Figure~\ref{fig:change_of_coordinates}.)

Gluing the family $F$ of triangles via the transformations $\lambda_{(f; e, e')}^\theta$ induces a complex-affine structure on the punctured surface $S_g \setminus Z$ as follows.
Consider small open neighborhoods $U_f$ of the punctured triangles $f \setminus Z \subset S_g \setminus Z$.
The maps $\phi_{(f, e)}^\theta$ extend to the sets $U_f$ in such a way that the transition functions $\phi^\theta_{(f', e')} \circ (\phi^\theta_{(f, e)})^{-1}$ are equal to the transformations $\lambda_{(f; e, e')}^\theta$.
We have thus specified an atlas of charts $\{\phi^\theta_{(f,e)}: U_f \to \C\}$ for a complex-affine structure $S_g \setminus Z$.

Finally, we define $\nabla(\theta)$ as the flat connection on $T(S_g \setminus Z)$ induced by this complex-affine structure.
\end{defn}

\begin{figure}
    \centering
    \includegraphics[width=0.60\linewidth]{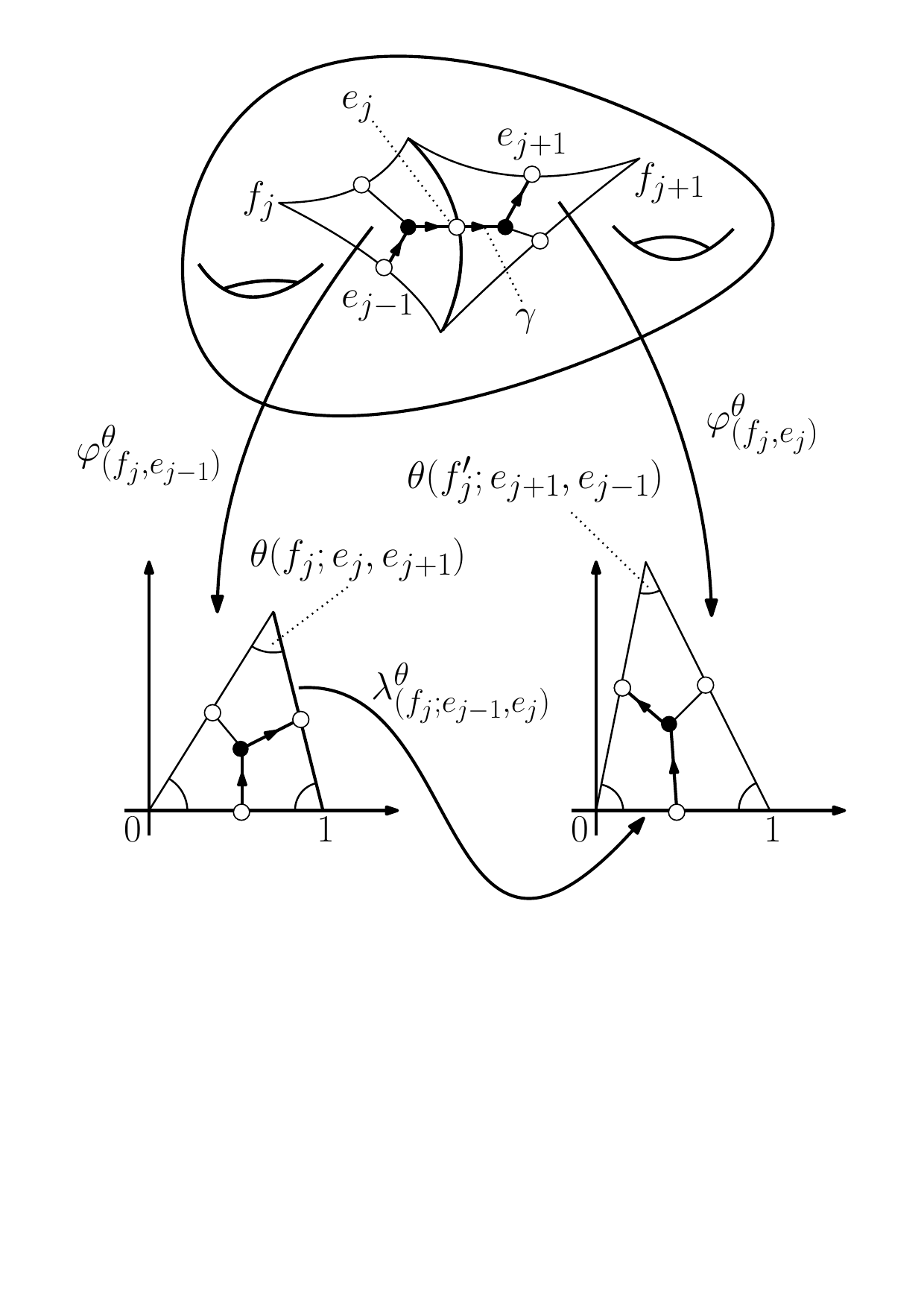}
    \caption{The complex-affine transition function determined by $\theta$ between two charts that meet the curve $\gamma$ in $\Gamma$.}
    \label{fig:change_of_coordinates}
\end{figure}
\begin{rem}
Note that, for edges $e$ and $e'$ of a common triangle $f$, the transition functions $\phi^\theta_{(f, e')} \circ (\phi^\theta_{(f, e)})^{-1}$ are complex-affine because the images of $f$ in each chart are similar triangles.
\end{rem}

\subsection{The holonomy homomorphism}\label{subsec:affine_holonomy}

The following computation is required for the proof of Lemma~\ref{lem:holonomy_is_holonomy}.
\begin{lem}\label{lem:computing_derivative}
    For all $(f; e, e') \in \Z\la A \ra$, we have
    \[ \frac{d}{dz}\lambda^\theta_{(f; e, e')} = D^\theta(f ; e, e') \cdot R^\theta(f ; e, e'). \]
\end{lem}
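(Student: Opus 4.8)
The plan is to compute the derivative $\tfrac{d}{dz}\lambda^\theta_{(f;e,e')}$ directly. Since $\lambda^\theta_{(f;e,e')}$ is a complex-affine transformation $z \mapsto az+b$ with $a \in \C^*$, its derivative is the constant $a$, and the content of the lemma is the exact value of $a$. Both sides of the asserted identity are homomorphisms $\Z\la A\ra \to \C^*$ (the right-hand side by Definition~\ref{contribution defn}, and the left-hand side because the linear part of a composite of complex-affine maps is the product of the linear parts), so it suffices to verify the identity when $(f;e,e')$ is one of the combinatorial angles freely generating $\Z\la A\ra$, i.e. when $e' = e+1$. Fix such an angle, and let $f'$ be the other face incident to the edge $e+1$.

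Next I would pin down the normalized triangle $T = \phi^\theta_{(f,e)}(f)$ of Definition~\ref{defn:induced_c_aff_struct}: it lies in $\{\Im z \ge 0\}$ with $\phi^\theta_{(f,e)}(e) = [0,1]$, so write its vertices as $0, 1, v$ with $\Im v > 0$ and counterclockwise boundary running $[0,1]$, $[1,v]$, $[v,0]$. As $\phi^\theta_{(f,e)}$ is orientation-preserving, the counterclockwise cyclic order $e, e+1, e+2$ on the edges of $f$ is carried to these three arcs in order, so reading off which pair of edges meets at each vertex, the interior angles of $T$ are $\theta(f;e-1,e)$ at $0$, $\theta(f;e,e+1)$ at $1$, and $\theta(f;e+1,e-1)$ at $v$ (using $e+2 \equiv e-1 \bmod 3$, and indeed these sum to $\pi$). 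In particular the shared edge $e+1$ is the segment $[1,v]$; computing its length by the law of sines (the unit side $[0,1]$ is opposite the vertex $v$, and $[1,v]$ is opposite the vertex $0$) and its direction from the interior angle at $1$ (the side $[1,0]$ points in direction $e^{i\pi}$, and $T$ lies above the real axis), one gets
\[
v - 1 \;=\; -\,\frac{\sin\theta(f;e-1,e)}{\sin\theta(f;e+1,e-1)}\,e^{-i\theta(f;e,e+1)}.
\]

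Finally, $\lambda = \lambda^\theta_{(f;e,e+1)}$ carries $[1,v] = \phi^\theta_{(f,e)}(e+1)$ onto $[0,1] = \phi^\theta_{(f',e+1)}(e+1)$ reversing orientation; since the edge $e+1$ is traversed $1 \to v$ in the counterclockwise boundary of $T$ but $0 \to 1$ in that of $\phi^\theta_{(f',e+1)}(f')$, the condition pins down $\lambda(1) = 1$ and $\lambda(v) = 0$. Writing $\lambda(z) = az + b$ and subtracting these gives $a(1-v) = 1$, hence
\[
a \;=\; \frac{-1}{v-1} \;=\; \frac{\sin\theta(f;e+1,e-1)}{\sin\theta(f;e-1,e)}\,e^{i\theta(f;e,e+1)} \;=\; D^\theta(f;e,e+1)\cdot R^\theta(f;e,e+1)
\]
by the definitions of $D^\theta$ and $R^\theta$, which is exactly the claimed identity on the generator $(f;e,e+1)$.

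The step I expect to be the main obstacle is the orientation and labeling bookkeeping: one must correctly match the $\Z/3\Z$-indexed edges of $f$ to the three vertices of $T$ and correctly unwind ``orientation-reversing'' so as to force $\lambda(1)=1$, $\lambda(v)=0$ rather than $\lambda(1)=0$, $\lambda(v)=1$ — the latter would instead produce $\bar a$ or $a^{-1}$. The remaining ingredients (the law of sines, the computation of the edge direction, and the formal reduction to generators) are routine, but a sign error in the orientation analysis propagates straight into the final formula, so this is where care is essential.
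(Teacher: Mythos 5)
Your computation of $\tfrac{d}{dz}\lambda^\theta_{(f;e,e+1)}$ is correct and captures the same geometric content as the paper's proof: both rest on the Law of Sines for the side length and on carefully tracking the orientation of the glued edge; the paper separates modulus and phase while you compute the affine map from its action on two points, but these are equivalent.

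The gap is in the opening reduction. You assert that the left-hand side is a homomorphism $\Z\la A\ra \to \C^*$ ``because the linear part of a composite of complex-affine maps is the product of the linear parts,'' and use this to restrict attention to generators $e'=e+1$. But for that reduction you need $\tfrac{d}{dz}\lambda^\theta_{(f;e,e-1)} = \bigl(\tfrac{d}{dz}\lambda^\theta_{(f;e-1,e)}\bigr)^{-1}$, since $(f;e,e-1) = -(f;e-1,e)$ in $\Z\la A\ra$, and this does not follow from any composition: $\lambda^\theta_{(f;e,e-1)}$ is the transition across the edge $e-1$ from the chart $\phi^\theta_{(f,e)}$ to the chart $\phi^\theta_{(f'',e-1)}$, while $\lambda^\theta_{(f;e-1,e)}$ is the transition across the different edge $e$ from the chart $\phi^\theta_{(f,e-1)}$ to $\phi^\theta_{(f''',e)}$. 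These two maps are not composable (neither's codomain chart matches the other's domain chart), so the ``product of linear parts'' argument does not apply, and the left-hand side is not a priori a homomorphism. Both cases $e'=e\pm 1$ genuinely arise in the application (the transitions $\lambda^\theta_{(f_j;e_{j-1},e_j)}$ in Lemma~\ref{lem:holonomy_is_holonomy} have $e_j$ equal to either the cyclic successor or predecessor of $e_{j-1}$), and the paper checks both directly. The fix is to repeat your computation with $e'=e-1$: the orientation-reversing gluing forces $\lambda(0)=0$ and $\lambda(v)=1$, with $v = |v|e^{i\theta(f;e-1,e)}$ and $|v| = \sin\theta(f;e,e+1)/\sin\theta(f;e+1,e-1)$, so $\tfrac{d}{dz}\lambda = 1/v = D^\theta(f;e,e-1)\cdot R^\theta(f;e,e-1)$ as required.
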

\begin{proof}
For $\zeta \in \C$, we refer to $\zeta/|\zeta|$ as the \emph{phase} of $\zeta$.
We will show that the phase and modulus of $\frac{d}{dz}\lambda^\theta_{(f; e, e')}$ are equal to $R^\theta(f; e, e')$ and $D^\theta(f; e, e')$, respectively.

We first observe that $\lambda_{(f; e, e+1)}^\theta$ rotates the triangle $\phi_{(f, e)}^\theta(f)$ counterclockwise through the angle $\theta(f; e, e+1)$ and $\lambda_{(f; e, e-1)}^\theta$ rotates $\phi_{(f,e)}^\theta(f)$ clockwise through the angle $\theta(f; e, e-1)$.
Since $e'$ is either $e+1$ or $e-1$, it follows from either case that the phase of $\frac{d}{dz}\lambda^\theta_{(f; e, e')}$ is equal to $R^\theta(f; e, e')$.

Now observe that since the segment $\phi_{(f, e)}^\theta(e)$ has length $1$, the Law of Sines implies that the length of $\phi_{(f, e)}^\theta(e + 1)$ is $\sin\theta(f; e - 1, e)/\sin\theta(f; e+1, e - 1)$, and the length of $\phi_{(f, e)}^\theta(e - 1)$ is equal to $\sin\theta(f; e, e+1)/\sin\theta(f; e+1, e - 1)$. 
In particular, we have
\[
\mathrm{length}\left(\phi_{(f,e)}^\theta(e')\right) = D^\theta(f; e, e')\1.
\]
Since the map $\lambda_{(f; e, e')}^\theta$ takes the edge $\phi_{(f,e)}^\theta(e')$ onto the segment $[0,1]$, it follows that the modulus of $\frac{d}{dz}\lambda^\theta_{(f; e, e')}$ is equal to the inverse of the length of $\phi^\theta_{(f, e)}(e')$, which is $D^\theta(f; e, e')$ as desired.
\end{proof}

\begin{defn}\label{nabla definition}
Fixing an arbitrary basepoint $x \in S_g \setminus Z$, we have a holonomy homomorphism $\pi_1(S_g \setminus Z, x) \to \C^*$ given by parallel transport along the flat connection $\nabla(\theta)$.
Since $\C^*$ is abelian, this map factors through the abelianization $H_1(S_g \setminus Z; \Z)$ of $\pi_1(S_g \setminus Z, x)$.
We define $\mathrm{Hol}^{\nabla(\theta)}: H_1(S_g \setminus Z; \Z) \to \C^*$ to be the induced map.
\end{defn}

As in Remark~\ref{rem:deformation_retraction}, there is a deformation retraction $S_g \setminus Z \twoheadrightarrow \Gamma$, and hence the inclusion $i: \Gamma \hookrightarrow S_g \setminus Z$ induces an isomorphism $i_*: H_1(\Gamma) \xrightarrow{\sim} H_1(S_g \setminus Z; \Z)$.
The following lemma asserts that the combinatorial holonomy map $\mathrm{Hol}^\theta$ does indeed compute the holonomy of $\nabla(\theta)$.

\begin{lem}\label{lem:holonomy_is_holonomy}
We have
\[ \mathrm{Hol}^{\nabla(\theta)} \circ i_* = \mathrm{Hol}^\theta. \]
\end{lem}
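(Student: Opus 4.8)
The plan is to verify the identity $\mathrm{Hol}^{\nabla(\theta)} \circ i_* = \mathrm{Hol}^\theta$ by checking it on a generating set of $H_1(\Gamma)$, namely on simple cycles, and reducing the computation of holonomy around such a cycle to a product of the local transition data already assembled in \S\ref{subsec:combinatorial_holonomy}. First I would recall that a simple cycle $\alpha = \sum_{j=1}^\ell \left((e_{j-1}, f_j) + (f_j, e_j)\right)$ is realized under the inclusion $i: \Gamma \hookrightarrow S_g \setminus Z$ by a loop $\gamma_\alpha$ that, for each $j$, passes from the chart $U_{f_j}$ through the edge $e_j$ into the chart $U_{f_{j+1}}$. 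Parallel transport for $\nabla(\theta)$ along $\gamma_\alpha$ is, by Definition~\ref{defn:induced_c_aff_struct}, computed in these charts: crossing the edge $e_j$ contributes the derivative of the transition function $\phi^\theta_{(f_{j+1}, e_j)} \circ (\phi^\theta_{(f_j, e_{j-1})})^{-1}$, which factors through the triangle $f_j$ as $\lambda^\theta_{(f_j; e_{j-1}, e_j)}$ composed with the (complex-affine, hence holonomy-trivial up to its derivative) change of chart within $f_j$ from the $e_{j-1}$-normalization to the $e_j$-normalization.

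The key step is then Lemma~\ref{lem:computing_derivative}: the derivative of $\lambda^\theta_{(f_j; e_{j-1}, e_j)}$ is exactly $D^\theta(f_j; e_{j-1}, e_j) \cdot R^\theta(f_j; e_{j-1}, e_j)$. Accumulating these factors over $j = 1, \dots, \ell$ and telescoping the intra-triangle normalization changes (which, being holomorphic affine maps whose derivatives are absorbed into consecutive $\lambda$'s, contribute nothing to the total product around a closed loop), I get that the holonomy of $\nabla(\theta)$ around $\gamma_\alpha$ equals
\[
\prod_{j=1}^\ell D^\theta(f_j; e_{j-1}, e_j) \cdot R^\theta(f_j; e_{j-1}, e_j) = (D^\theta \cdot R^\theta)\left(\sum_{j=1}^\ell (f_j; e_{j-1}, e_j)\right) = (D^\theta \cdot R^\theta)(\phi(\alpha)),
\]
using the definition of $\phi$ on simple cycles and that $D^\theta \cdot R^\theta$ is a homomorphism on $\Z\la A\ra$. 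By Definition~\ref{combinatorial holonomy} the right-hand side is $\mathrm{Hol}^\theta(\alpha)$, so the two homomorphisms $H_1(\Gamma) \to \C^*$ agree on every simple cycle; since such cycles span $H_1(\Gamma)$ and both sides are homomorphisms, they agree identically.

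I expect the main obstacle to be the careful bookkeeping of orientations and basepoints in the first step: matching the algebraic sign conventions for $(f,e)$ versus $(e,f)$ in $C_1(\Gamma)$ and the $\Z/3\Z$-ordering convention for combinatorial angles with the geometric direction in which $\gamma_\alpha$ crosses each edge, and checking that the within-triangle change-of-normalization maps genuinely telescpe rather than leaving residual derivative factors. Once one fixes the convention (as in Figure~\ref{fig:change_of_coordinates} and Figure~\ref{fig:square_l_combinatorial}) that the chart on $U_{f_j}$ used when entering across $e_{j-1}$ and the chart used when leaving across $e_j$ are related precisely by the intra-triangle affine map whose composition with $\lambda^\theta_{(f_j; e_{j-1}, e_j)}$ gives the genuine transition function, the telescoping is automatic and the identification with $\phi(\alpha)$ is forced. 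I would also note for completeness that the argument does not depend on a choice of basepoint, since $\C^*$ is abelian and the holonomy already factors through $H_1$.
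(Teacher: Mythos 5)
Your proposal is correct and follows essentially the same route as the paper: both verify the identity on simple cycles by identifying $i_*(\alpha)$ with a loop $\gamma$ crossing the charts $U_{f_j}$, recognizing the per-crossing transition function as $\lambda^\theta_{(f_j;e_{j-1},e_j)}$, and then applying Lemma~\ref{lem:computing_derivative} and multiplying. One small remark: the ``intra-triangle change-of-normalization maps'' you worry about telescoping are actually not present at all under Definition~\ref{defn:induced_c_aff_struct} --- by construction $\lambda^\theta_{(f_j;e_{j-1},e_j)}$ \emph{is} the transition function $\phi^\theta_{(f_{j+1},e_j)}\circ(\phi^\theta_{(f_j,e_{j-1})})^{-1}$, so the product of the $\lambda$'s is already the full chain of transition functions and no residual factors arise.
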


\begin{proof}
Let $\alpha = \sum_{j=1}^\ell \left((e_{j-1}, f_j) + (f_j, e_j)\right) \in H_1(\Gamma)$ be a simple cycle.
Then $i_*(\alpha)$ is the homology class $[\gamma] \in H_1(S_g \setminus Z; \Z)$ of the simple closed curve $\gamma$, which is the concatenation of arcs that join the edges $e_{j-1}$ and $e_j$ of $f_j$ (as in Figure~\ref{fig:change_of_coordinates}).

Note that $\gamma$ passes through the open sets $\{U_{f_j} \st j \in \Z/\ell\Z\}$, and the transition function from $U_{f_j}$ to $U_{f_{j+1}}$ is $\lambda_{(f_j; e_{j-1}, e_j)}^\theta$.
By definition, $\mathrm{Hol}^{\nabla(\theta)}([\gamma])$ is equal to the derivative of the composition of the transition functions between the charts through which $\gamma$ passes.
Therefore,
\begin{align*}
\mathrm{Hol}^{\nabla(\theta)}(i_*(\alpha)) = \mathrm{Hol}^{\nabla(\theta)}([\gamma]) &= \frac{d}{dz} \left(\lambda_{(f_\ell; e_{\ell - 1}, e_\ell)}^\theta \circ \cdots \circ \lambda_{(f_1; e_0, e_1)}^\theta \right)\\
 &= \prod_{j=1}^\ell D^\theta(f_j; e_{j-1}, e_j) \cdot R^\theta(f_j; e_{j-1}, e_j) \\
&= \mathrm{Hol}_D^\theta(\alpha) \cdot \mathrm{Hol}_R^\theta(\alpha) \\
&= \mathrm{Hol}^\theta(\alpha),
\end{align*}
where the third equality follows from Lemma~\ref{lem:computing_derivative}.
\end{proof}

\subsection{The angle map}\label{subsec:angle_map}
Fix a topological triangulation $\tau$, and let $\Delta = \Delta(\tau)$ be its associated iso-triangulable region.
We now consider how $\Delta$ embeds into the space of complex-affine structures under the \emph{angle map} $\Theta$.
Lemma~\ref{lem:image_of_angle_map} uses $\Theta$ to express the well-understood fact that a translation surface structure is equivalent (up to the action of $\C^*$) to a complex-affine structure with trivial holonomy.

\begin{defn}\label{defn:angle_map}
Let $\Tilde\Delta = \Tilde\Delta(\tau) \subset \Omega \mathcal T_{g,n}(\kappa)$ be an iso-triangulable region.
For each Teichm\"uller equivalence class $M \in \Tilde\Delta$, choose a translation surface structure $(X,\omega) \in M$ such that $\tau$ is a geometric triangulation of $(X,\omega)$.
For each combinatorial angle $(f; e, e+1) \in A$, we define the map
\[ \Tilde\Theta_{(f; e, e+1)}: \Tilde\Delta \to (0, \pi) \]
such that $\Tilde\Theta_{(f; e, e+1)}(M)$ is equal to the angle on $(X,\omega)$ between the line segments $e, e + 1$, measured inside the Euclidean triangle $f$. Note that the angle is independent of the choice of $(X, \omega) \in M$. These $\Tilde\Theta_{(f; e, e+1)}$ together define the \emph{angle map} $\Tilde\Theta: \Tilde\Delta \to \mathcal A$, where $\Tilde\Theta(M)$ is the angle assignment $(f; e, e + 1) \mapsto \Tilde\Theta_{(f; e, e+1)}(M)$.

The angle map $\Tilde \Theta$ is invariant under the action of $\C^*$ on $\Omega \mathcal T_{g,n}(\kappa)$, because this action simply rescales and rotates the triangles, and so we have an induced angle map
\[ \Theta: \Delta \to \mathcal A \]
defined on $\Delta \subset \P \Omega \mathcal T_{g,n}(\kappa)$.
\end{defn}

As a consequence of Lemma~\ref{lem:image_of_angle_map}, we may understand $\Theta$ as providing a system of coordinates on $\Delta$, which we refer to as \emph{angle coordinates}.

\begin{defn}
We define the \emph{locus of trivial holonomy} in $\mathcal A$ to be the set
\[ \mathcal V = \left\{ \theta \in \mathcal A \mst \mathrm{Hol}^{\theta}(\alpha) = 1 \text{ for all } \alpha \in H_1(\Gamma) \right\}. \]
That is, $\mathcal V$ is the preimage under $\mathrm{Hol}: \mathcal A \to \Hom(H_1(\Gamma), \C^*)$ of the trivial character $H_1(\Gamma) \to \C^*$.
\end{defn}

\begin{lem}\label{lem:image_of_angle_map}
The map $\Theta: \Delta \to \mathcal A$ is a homeomorphism onto the locus $\mathcal V$ of trivial holonomy.
\end{lem}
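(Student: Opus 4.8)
The plan is to construct an explicit inverse to $\Theta$ from the classical dictionary between translation surfaces and complex-affine structures of trivial holonomy, and then to verify that $\Theta$ and this inverse are both continuous. I would argue in three steps: (i) $\Theta(\Delta) \subseteq \mathcal V$; (ii) every $\theta \in \mathcal V$ equals $\Theta(M)$ for a unique $M \in \Delta$; (iii) $\Theta$ is a homeomorphism. For (i), take $M = [(X,\omega)] \in \Delta$ with $\tau$ geometric on $(X,\omega)$ and set $\theta = \Theta(M)$, which lies in $\mathcal A$ because each face of $\tau$ is an honest Euclidean triangle. The flat metric of $(X,\omega)$ restricts to a complex-affine structure on $S_g \setminus Z$ of trivial linear holonomy, and $\nabla(\theta)$ is obtained from it by post-composing the translation chart on each triangle with the orientation-preserving Euclidean similarity that normalizes that triangle to the model triangle of Definition~\ref{defn:induced_c_aff_struct}. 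Re-normalizing charts by similarities leaves the linear part of every holonomy unchanged, since the chart-wise multipliers telescope around any loop in the abelian group $\C^*$; hence $\mathrm{Hol}^{\nabla(\theta)}$ is trivial, and then $\mathrm{Hol}^\theta = \mathrm{Hol}^{\nabla(\theta)} \circ i_*$ is trivial by Lemma~\ref{lem:holonomy_is_holonomy}, so $\theta \in \mathcal V$.

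For (ii), fix $\theta \in \mathcal V$. By Lemma~\ref{lem:holonomy_is_holonomy} the affine structure $\nabla(\theta)$ has trivial holonomy, so its developing map $\mathrm{dev}\colon \widetilde{S_g \setminus Z} \to \C$ has only translational monodromy, and $\omega_\theta \coloneqq \mathrm{dev}^* dz$ descends to a nowhere-vanishing holomorphic $1$-form on $S_g \setminus Z$ --- a translation structure. Since this structure is assembled from finitely many genuine Euclidean triangles it has finite area near each $x_j$, so $\omega_\theta$ extends holomorphically over $x_j$; the resulting cone angle is the sum of the values $\theta(f; e, e')$ over the corners of $\tau$ at $x_j$, and triviality of $\mathrm{Hol}^\theta$ on the class of a small loop around $x_j$ forces it to be a positive integer multiple of $2\pi$. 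The combinatorial labeling of $\tau$ supplies a marking, so we obtain a point $[(X_\theta, \omega_\theta)]$ of $\P\Omega\mathcal T_{g,n}$ for which $\tau$ is geometric; to see that it lies in the given component $\P\Omega\mathcal T_{g,n}(\kappa)$, I would note that the cone-angle functions are $\Z$-valued and continuous on $\mathcal V$, hence locally constant, so on the component of $\mathcal V$ meeting the nonempty set $\Theta(\Delta)$ they take the values prescribed by $\kappa$. The only choice in this reconstruction is the placement of an initial triangle, which is exactly the $\C^*$-freedom, so $\theta \mapsto [(X_\theta,\omega_\theta)]$ is well-defined; it is a two-sided inverse to $\Theta$ because a marked translation surface with geometric triangulation $\tau$ is determined up to $\C^*$ by the shapes of its triangles and their gluing pattern, precisely the data recorded by $\theta$.

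For (iii), $\Theta$ is continuous because the angle between two sides of a Euclidean triangle is a continuous function of the side vectors, and the side vectors of the triangles of $\tau$ are period maps $\Phi_\gamma$ restricted to $\Delta$. Conversely the side vectors of $(X_\theta,\omega_\theta)$ are real-analytic in $\theta$ (given the normalization), so the period coordinates of $\Theta^{-1}(\theta)$ depend continuously on $\theta$ and $\Theta^{-1}$ is continuous. A continuous bijection onto $\mathcal V$ with continuous inverse is a homeomorphism onto $\mathcal V$, as claimed.

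I expect step (ii) to be the main obstacle: one must check that the flat structure built from an arbitrary $\theta \in \mathcal V$ genuinely closes up to a \emph{translation surface} on all of $S_g$ in the \emph{prescribed} stratum --- that each puncture fills in as a cone point of the correct integer cone angle --- and that this reconstruction varies continuously with $\theta$. The triviality of holonomy around the puncture classes and the finiteness of the model-triangle areas are what drive the first point, and matching the stratum is the one place where it is convenient to invoke that $\Delta(\tau)$ is nonempty.
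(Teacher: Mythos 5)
Your argument follows the same route as the paper's: continuity from the period maps, the containment $\Theta(\Delta) \subseteq \mathcal V$ from the triviality of the translation atlas's holonomy via Lemma~\ref{lem:holonomy_is_holonomy}, and an inverse built by pulling back $dz$ through the charts of Definition~\ref{defn:induced_c_aff_struct} once $\Hol^{\nabla(\theta)}$ is known to vanish. You are usefully more explicit in two places: step (i) spells out that the $\nabla(\theta)$-atlas differs from the translation atlas by per-chart similarities, whose multipliers telescope in the abelian group $\C^*$ and therefore leave the linear holonomy unchanged (the paper applies Lemma~\ref{lem:holonomy_is_holonomy} to the translation atlas without remarking that these are different atlases); and step (ii) flags that one must check the reconstructed surface lies in the prescribed stratum $\Omega\mathcal{T}_{g,n}(\kappa)$, a point the paper passes over silently. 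Your resolution of the stratum question, via locally constant cone-angle functions on $\mathcal V$, only establishes surjectivity onto the component of $\mathcal V$ meeting $\Theta(\Delta)$; a fully airtight argument would still require either the connectedness of $\mathcal V$ or a combinatorial argument pinning down the cone angle at each $x_j$, but since the paper's own proof does not address this either, this is a shared subtlety rather than a defect particular to your write-up.
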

\begin{proof}
We first prove that $\Theta$ is continuous.
Given $(f; e, e+1)$, let $\gamma$ and $\gamma'$ denote the relative homology classes of the edges $e$ and  $e+1$ of  $f$, respectively, oriented outward from their common vertex.
Writing $\Phi_\gamma(M) = r\exp(i\phi)$ and $\Phi_{\gamma'}(M) = r'\exp(i\phi')$, we have $\Tilde\Theta_{(f; e, e+1)}(M) = \phi - \phi' \in (0, \pi)$, where this difference is a well-defined element of $(0, \pi)$ because $e+1$ follows $e$ in the counterclockwise orientation of the face $f$.
It follows that $\Tilde \Theta$ depends continuously on the period maps $\Phi_\gamma$, and so $\Tilde \Theta$ is continuous by the definition of the topology on $\Omega \mathcal T_{g,n}(\kappa)$.
Therefore, the induced angle map $\Theta$ is also continuous, as desired.

Next, we show that the image of $\Theta$ is in $\mathcal{V}$.
Consider $M \in \Delta$, and let $(X,\omega)$ be a translation surface structure in the Teichm\"uller equivalence class $M$ such that $\tau$ is a geometric triangulation of $(X,\omega)$.
Recall that the transition functions for the atlas that induce the translation structure on $(X, \omega)$ are of the form $z \mapsto z + C$ for $C \in \C$.
As these maps have derivative equal to $1 \in \C^*$, Lemma~\ref{lem:holonomy_is_holonomy} implies that $\Theta(M) \in \mathcal V$.

Finally, we construct a continuous inverse mapping to $\Theta$ that is defined on $\mathcal{V}$.
Fix $\theta \in \mathcal V$, and consider the complex-affine structure from Definition~\ref{defn:induced_c_aff_struct}, which induces the structure of a Riemann surface $X$ on $S_g$.
Choose arbitrarily a half-edge $(f, e) \in H$, and consider the differential form on the face $f \subset S_g$ given by $\omega_{(f, e)} \coloneqq (\phi_{(f, e)}^\theta)^*(dz)$, where the edge $e$ of $f$ is arbitrarily chosen.
The holonomy of the flat connection $\nabla(\theta)$ is precisely the cohomological obstruction to extending this local form to a well-defined global form $\omega$ on all of $S_g$.
Since $\Hol^\theta = \Hol^{\nabla(\theta)} \equiv 1$, there is no obstruction, and therefore we obtain a globally well-defined differential form $\omega$ on $S_g$.
If another half-edge $(f, e)$ had been chosen, the resulting form would differ by a scalar $\lambda \in \mathbb{C}^*$.
The mapping $\mathcal V \to \Delta$ that sends $\theta$ to $[(X, \omega)]$, the Teichm\"uller equivalence class of $(X,\omega)$, is inverse to $\Theta$ by construction, and is clearly continuous.
We conclude that $\Theta$ is a homeomorphism onto $\mathcal V$.
\end{proof}

\section{Triangle matchings}\label{sec:triangle_matchings}
We are now ready to define a \emph{triangle matching} $\iota : H(\tau) \to H(\tau)$, the central tool of this paper.
In \S\ref{subsec:matchings_angles}, we define $\iota$-invariant angle assignments, and we prove in Theorem~\ref{thm:constant_holonomy_on_invariants} that they determine the same combinatorial holonomy.
In Lemma~\ref{lem:Sam's_favorite_formula} of \S\ref{subsec:matchings_rel_homology}, we show that a triangle matching induces a symmetry on the relative homology classes of the edges of $\tau$, which will be used to establish Lemma~\ref{lem:image_of theta_inside_invariant} and Lemma~\ref{lem:same_cyls}.

\begin{defn}\label{defn:triangle_matching}
Let $\iota: H \to H$ be a bijection. We say that $\iota$ is a \emph{triangle matching} if
\begin{enumerate}
\item\label{commutativity} $\iota$ commutes with the action $\Z/3\Z \curvearrowright H$.
\item\label{restriction as -1} The induced automorphism $\iota: \Z\la H \ra = C_1(\Gamma) \to C_1(\Gamma)$ satisfies $\iota(\alpha) = -\alpha$ for all $\alpha \in H_1(\Gamma) \leq C_1(\Gamma)$.
\end{enumerate}
\end{defn}

\begin{rem}
The name ``triangle matching'' was chosen for the following reasons. Since, by Property~(\ref{commutativity}), $\iota$ commutes with the action $\Z/3\Z \curvearrowright H$, and the set of triangles $F$ is in one-to-one correspondence with the $\Z/3\Z$-orbits of half-edges, the map $\iota$ induces a bijection $\iota_F: F \to F$.

In fact, the triangles $f$ and $\iota_F(f)$ are ``matched'' in a stronger geometric sense: they must be congruent in any translation surface structure $(X, \omega)$ for which $\tau$ is a geometric triangulation. This is an immediate consequence of Lemma~\ref{lem:Sam's_favorite_formula}, because the shapes of the triangles are determined by the periods $\int_{r(f, e)} \omega = -\int_{r(\iota(f, e))} \omega$.
\end{rem}


\begin{figure}
    \centering
    \includegraphics[width=0.5\linewidth]{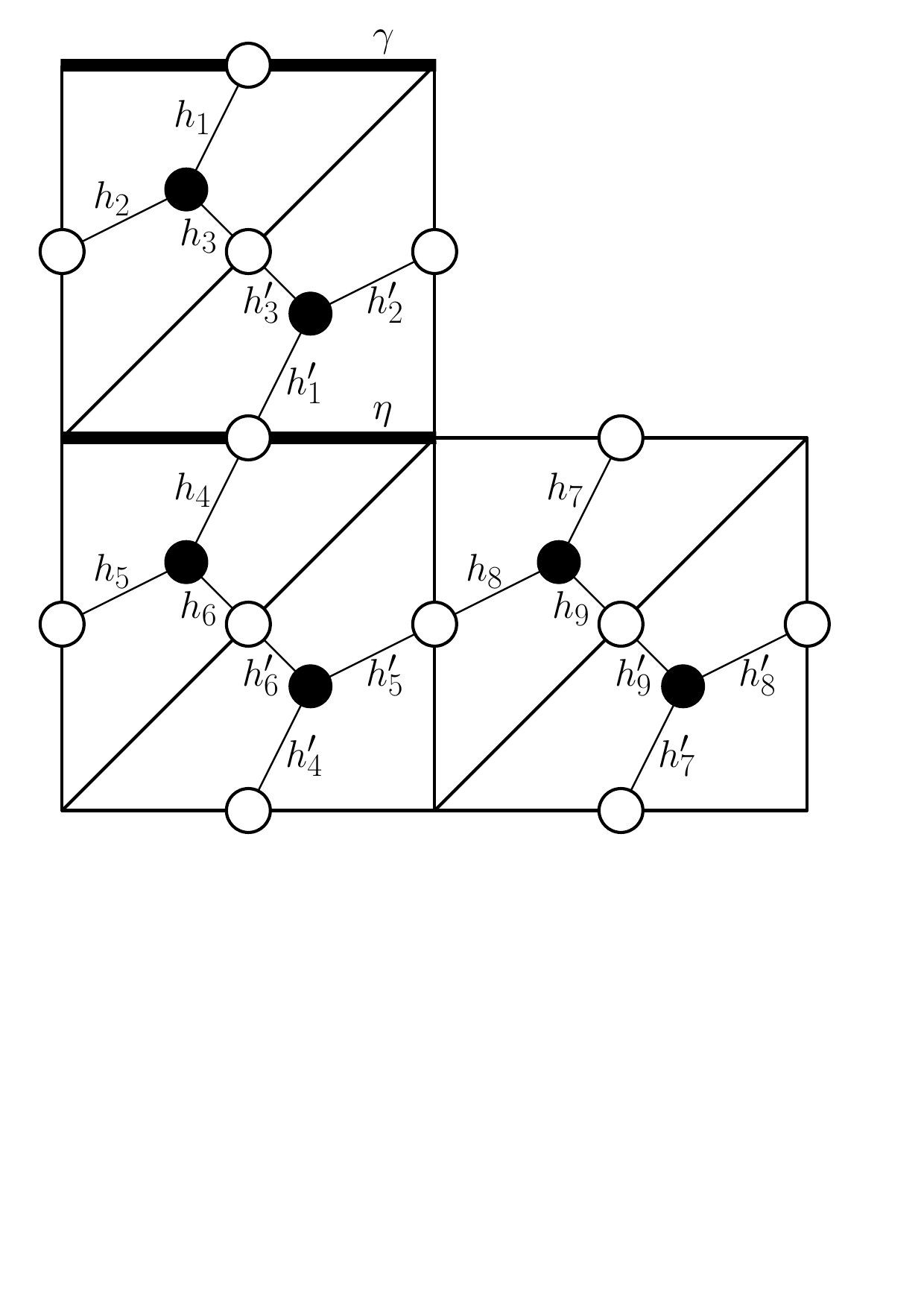}
    \caption{A triangle matching $\iota$ on the ribbon graph dual to a triangulation of a L-shaped translation surface.
    In the figure, we write $\iota(h_j) = h'_j$.
    Because the half-edges $h_1$ and $h_1'$ are matched, Lemma~\ref{lem:Sam's_favorite_formula} implies that the arcs $\gamma$ and $\eta$ (when oriented left to right) are homologous.}
    \label{fig:square_l}
\end{figure}

\begin{ex}(Square L)\label{ex:square_l_matching}
In Figure~\ref{fig:square_l}, we indicate a triangle matching for the dual graph of the standard triangulation of the square L.
From the figure, we can see that the cyclic order about each solid face vertex is preserved.
The reader can also check that $\iota$ negates the homology classes of each horizontal and vertical cylinder core curve (which form a basis for $H_1(\Gamma)$ in this case).
For example, the horizontal core curve for the top square is the simple cycle
\[ \alpha \coloneqq -h_2 + h_3 - h_3' + h_2'. \]
Applying $\iota$ yields
\[ \iota(\alpha) = -h'_2 + h'_3 - h_3 + h_2, \]
so $\iota(\alpha)$ is indeed equal to $-\alpha$.
\end{ex}

\subsection{Triangle matchings and holonomy}\label{subsec:matchings_angles}
Every triangle matching naturally determines an automorphism of the angle group $\Z \la A \ra$ as in Definition~\ref{defn:angle_matching}.
We may therefore consider the set of $\iota$-invariant angle assignments $\mathcal{A}^\iota$.
In Lemma~\ref{lem:convex_invariant_set}, we show that $\mathcal A^\iota$ is a convex subspace of the space of angle assignments $\mathcal{A}$.
We study the representation $\theta \mapsto \mathrm{Hol}^\theta \in \mathrm{Hom}(H_1(\Gamma), \C^*)$ on $\mathcal{A}^\iota$ in Theorem~\ref{thm:constant_holonomy_on_invariants} to establish Corollary~\ref{cor:invariant_inside_trivial}, which is one direction of the containment to determine the image $\Theta(\Delta)$.

\begin{defn}\label{defn:angle_matching}
Let $(f', e') = \iota(f, e)$.
Because $\iota$ commutes with the $(\Z/3\Z)$-action on $H$, we have $\{\iota(f, e), \iota(f, e) + 1\} = \{(f', e'), (f', e' + 1)\}$.
That is to say, $\iota$ induces a well-defined bijection $\iota: A \to A$ given by $\iota(f; e, e + 1) = (f'; e', e' + 1)$, and hence an automorphism $\iota: \Z\la A \ra \to \Z\la A \ra$.
\end{defn}

\begin{defn}
We say that an angle assignment $\theta: A \to (0, \pi)$ is $\iota$\emph{-invariant} if $\theta \circ \iota = \theta$. We denote by $\mathcal A^\iota$ the set of all $\iota$-invariant angle assignments $\theta \in \mathcal A$.
\end{defn}

Note that $\mathcal A$ can be viewed as the affine-linear subspace of $(0,\pi)^{3n} \subset \R^{3n}$, where $n = \# F$, consisting of points $x$ that satisfy $x_{3j} + x_{3j + 1} + x_{3j + 2} = \pi$ for all $0 \leq j < n$.
Therefore, we may speak of convex subsets of $\mathcal A$.

\begin{lem}\label{lem:convex_invariant_set}
Let $\iota: H \to H$ be a triangle matching. Then $\mathcal A^\iota$ is a convex subspace of $\mathcal A$.
\end{lem}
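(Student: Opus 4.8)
The plan is to verify that $\mathcal A^\iota$ is both an affine-linear subspace of the ambient space containing $\mathcal A$ and an intersection of $\mathcal A$ with that subspace; convexity then follows because $\mathcal A$ is convex and the intersection of a convex set with an affine subspace is convex. Concretely, recall that $\mathcal A$ sits inside $(0,\pi)^{A} \subset \R^{A}$ cut out by the affine equations $\sum_{n \in \Z/3\Z}\theta(f; e+n, e+n+1) = \pi$. The condition defining $\mathcal A^\iota$ is $\theta \circ \iota = \theta$, i.e. $\theta(a) = \theta(\iota a)$ for every combinatorial angle $a \in A$, where $\iota \colon A \to A$ is the bijection of Definition~\ref{defn:angle_matching}. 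Each such constraint $\theta(a) - \theta(\iota a) = 0$ is linear (indeed homogeneous linear) in the coordinates $\theta(a)$, so the set of $\theta \in \R^{A}$ satisfying all of them is a linear subspace $L \subseteq \R^{A}$.

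First I would observe that $\mathcal A^\iota = \mathcal A \cap L$: by definition a point of $\mathcal A^\iota$ is an angle assignment (hence in $\mathcal A$) that is $\iota$-invariant (hence in $L$), and conversely. Next I would note that $\mathcal A$ itself is the intersection of the open convex set $(0,\pi)^{A}$ with the affine subspace $P$ defined by the face-sum equations; thus $\mathcal A^\iota = (0,\pi)^{A} \cap P \cap L$, an intersection of an open convex set with the affine subspace $P \cap L$. Since $(0,\pi)^{A}$ is convex and the intersection of a convex set with any affine subspace is convex, $\mathcal A^\iota$ is convex, as claimed. (The nonempty hypothesis is not needed for convexity, and the empty set is vacuously convex; in our applications $\mathcal A^\iota \supseteq \Theta(\Delta(\tau))$ is nonempty whenever $\Delta_{\mathcal D}(\tau)$ is.)

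The only point requiring a word of justification — and the closest thing to an obstacle — is that $\iota \colon A \to A$ is genuinely a \emph{well-defined permutation of the coordinate set} $A$, so that the constraints $\theta(a) = \theta(\iota a)$ really are linear relations among the standard coordinates rather than something more complicated. This is exactly the content of Definition~\ref{defn:angle_matching}, which uses Property~(\ref{commutativity}) of a triangle matching ($\iota$ commutes with the $\Z/3\Z$-action on $H$) to check that $\{\iota(f,e), \iota(f,e)+1\}$ is again a combinatorial angle. Granting this, the argument is the routine linear-algebra observation above, and no estimate or transcendental input is involved.
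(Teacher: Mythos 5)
Your proof is correct and takes essentially the same approach as the paper: the paper's proof likewise observes that $\mathcal A^\iota$ is cut out inside $\mathcal A$ by the homogeneous linear equations $\theta(f;e,e+1)-\theta(\iota(f;e,e+1))=0$, and you simply spell out the standard fact that intersecting the convex set $\mathcal A$ with a linear subspace preserves convexity.
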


\begin{proof}
The set $\mathcal A^\iota$ is defined by a system of homogeneous linear equations, namely
\[
\theta(f; e, e+1) - \theta(\iota(f; e, e+1)) = 0
\]
for all $(f; e, e+1) \in A$.
\end{proof}

The holonomy representation restricted to $\mathcal A^\iota$ takes a particularly simple form:

\begin{lem}\label{lem:order_2_holonomy}
Let $\theta \in \mathcal A^\iota$.
Then $\mathrm{Hol}^\theta_D: H_1(\Gamma) \to \mathbb{R}_{> 0}$ is trivial, and $\mathrm{Hol}^\theta_R(\alpha) = \pm 1$ for all $\alpha \in H_1(\Gamma)$.
\end{lem}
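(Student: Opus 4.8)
The plan is to reduce the lemma to the single assertion that $\mathrm{Hol}^\theta(\alpha)\in\{1,-1\}$ for all $\alpha\in H_1(\Gamma)$, and then to prove that by exploiting the $\iota$-symmetry twice. The reduction is immediate from Definition~\ref{combinatorial holonomy}: we have $\mathrm{Hol}^\theta(\alpha)=\mathrm{Hol}^\theta_D(\alpha)\cdot\mathrm{Hol}^\theta_R(\alpha)$ with $\mathrm{Hol}^\theta_D(\alpha)\in\R_{>0}$ and $\mathrm{Hol}^\theta_R(\alpha)\in S^1$ (Definition~\ref{contribution defn}), so taking absolute values of $\mathrm{Hol}^\theta(\alpha)=\pm 1$ forces $\mathrm{Hol}^\theta_D(\alpha)=1$, and then $\mathrm{Hol}^\theta_R(\alpha)=\mathrm{Hol}^\theta(\alpha)=\pm 1$.

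First I will check that the product homomorphism $D^\theta\cdot R^\theta\colon\Z\la A\ra\to\C^*$, whose precomposition with $\phi$ is $\mathrm{Hol}^\theta$, is invariant under the automorphism $\iota\colon\Z\la A\ra\to\Z\la A\ra$ of Definition~\ref{defn:angle_matching}. It suffices to work on the generators $(f;e,e+1)$. Writing $(f',e')=\iota(f,e)$, Definition~\ref{defn:angle_matching} gives $\iota(f;e,e+1)=(f';e',e'+1)$, and because $\iota$ commutes with the $\Z/3\Z$-action it likewise sends the angles $(f;e+1,e-1)$ and $(f;e-1,e)$ to $(f';e'+1,e'-1)$ and $(f';e'-1,e')$; substituting these into Definition~\ref{contribution defn} and using $\theta\circ\iota=\theta$ gives $R^\theta(\iota(f;e,e+1))=R^\theta(f;e,e+1)$ and $D^\theta(\iota(f;e,e+1))=D^\theta(f;e,e+1)$. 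Hence $(D^\theta\cdot R^\theta)\circ\iota=D^\theta\cdot R^\theta$.

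Second, and this is the step I expect to be the main obstacle, I will prove the commutation $\iota\circ\phi=\phi\circ\iota$ of homomorphisms $H_1(\Gamma)\to\Z\la A\ra$, where on the right $\iota$ denotes the involution of $H_1(\Gamma)\le C_1(\Gamma)$, which equals $-\mathrm{id}$ by Property~(\ref{restriction as -1}). Both sides are homomorphisms and simple cycles span $H_1(\Gamma)$, so it is enough to verify this on a simple cycle $\alpha=\sum_{j=1}^\ell\big((e_{j-1},f_j)+(f_j,e_j)\big)$. The crucial point is extracting $\iota$'s action on half-edges: since $\iota(\alpha)=-\alpha$ in the free abelian group $C_1(\Gamma)=\Z\la H\ra$, the bijection $\iota$ must permute the $2\ell$ half-edges occurring in $\alpha$, carrying each half-edge occurring with coefficient $+1$ in $\alpha$ to one occurring with coefficient $-1$, and conversely; combined with Property~(\ref{commutativity}) (which makes $\iota$ descend to a bijection $\iota_F$ of $F$, restricting to a permutation $\sigma$ of the faces $f_1,\dots,f_\ell$ of $\alpha$), this forces $\iota(f_j,e_{j-1})=(f_{\sigma(j)},e_{\sigma(j)})$ and $\iota(f_j,e_j)=(f_{\sigma(j)},e_{\sigma(j)-1})$. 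Since $\iota$ on $\Z\la A\ra$ is obtained by applying $\iota$ to the constituent half-edges of an angle (Definition~\ref{defn:angle_matching}), this yields $\iota(f_j;e_{j-1},e_j)=(f_{\sigma(j)};e_{\sigma(j)},e_{\sigma(j)-1})=-(f_{\sigma(j)};e_{\sigma(j)-1},e_{\sigma(j)})$; summing over $j$ and reindexing by the bijection $\sigma$ gives $\iota(\phi(\alpha))=-\phi(\alpha)=\phi(\iota(\alpha))$. The subtlety is that a triangle matching is a priori only required to negate $H_1(\Gamma)$ and commute with the cyclic action, and need not respect the edge-identifications of $\tau$; the relation $\iota(\alpha)=-\alpha$ is exactly what pins down its action on half-edges along each cycle. (One can also phrase this via $p\circ\iota=\iota\circ p$ together with $p\circ\phi=\mathrm{id}$, which places $\iota(\phi(\alpha))+\phi(\alpha)$ in $\ker p$; but the simple-cycle analysis is still needed to know that $\iota$ preserves $\im\phi$, hence that this element is actually $0$.)

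Combining the two ingredients, for $\alpha\in H_1(\Gamma)$,
\begin{align*}
\mathrm{Hol}^\theta(\alpha) &= (D^\theta\cdot R^\theta)(\phi(\alpha)) = (D^\theta\cdot R^\theta)(\iota(\phi(\alpha))) \\
&= (D^\theta\cdot R^\theta)(\phi(\iota(\alpha))) = (D^\theta\cdot R^\theta)(-\phi(\alpha)) = \mathrm{Hol}^\theta(\alpha)\1,
\end{align*}
where the second equality uses the first ingredient, the third uses the second ingredient, the fourth uses $\iota(\alpha)=-\alpha$, and the first and last use that $D^\theta\cdot R^\theta$ is a homomorphism. Thus $\mathrm{Hol}^\theta(\alpha)^2=1$, so $\mathrm{Hol}^\theta(\alpha)=\pm 1$, and by the reduction this proves the lemma.
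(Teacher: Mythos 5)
Your proof is correct and follows the same overall strategy as the paper's: show that $\mathrm{Hol}^\theta\circ\iota=\mathrm{Hol}^\theta$, deduce $\mathrm{Hol}^\theta(\alpha)=\mathrm{Hol}^\theta(-\alpha)=\mathrm{Hol}^\theta(\alpha)^{-1}$, and then use the decomposition $\C^*\cong\R_{>0}\times S^1$ to conclude that $\mathrm{Hol}^\theta_D$ is trivial and $\mathrm{Hol}^\theta_R$ takes values in $\{\pm1\}$. The paper compresses the first step into a single line, $\mathrm{Hol}^\theta\circ\iota=\mathrm{Hol}^{\theta\circ\iota}=\mathrm{Hol}^\theta$; the first equality there is exactly the identity you unpack, and it tacitly requires the commutation $\iota\circ\phi=\phi\circ\iota$ on $H_1(\Gamma)\to\Z\la A\ra$ together with $(D^\theta\cdot R^\theta)\circ\iota=D^{\theta\circ\iota}\cdot R^{\theta\circ\iota}$. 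You correctly identified the intertwining $\iota\circ\phi=\phi\circ\iota$ as the nonobvious ingredient and supplied a clean simple-cycle argument for it, pinning down how a triangle matching must act on the support of a cycle (your observation that $\iota(\alpha)=-\alpha$ in the free group $\Z\la H\ra$ forces $\iota$ to permute the support of $\alpha$, exchanging $+1$ and $-1$ coefficients, and that the induced permutation of faces is well defined by the $\Z/3\Z$-equivariance, is exactly right). So your write-up is a fuller justification of what the paper asserts in one breath; no new idea is needed, but you have filled in a genuine detail rather than hand-waved over it.
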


\begin{proof}
Since $\theta \circ \iota = \theta$, we also have $\mathrm{Hol^\theta} \circ \iota = \mathrm{Hol}^{\theta \circ \iota} = \mathrm{Hol}^\theta$.
So for $\alpha \in H_1(\Gamma)$, we have
\[
\mathrm{Hol}^\theta(\alpha) = \mathrm{Hol}^\theta(\iota(\alpha)) = \mathrm{Hol}^\theta(-\alpha) = \mathrm{Hol}^\theta(\alpha)^{-1}.
\]
The multiplicative group $\R_{> 0}$ does not have elements of order 2 other than the identity, so $\mathrm{Hol}^\theta_D$ is trivial.
The elements of order 2 in $S^1$ are $\pm 1$, and hence the latter claim follows.
\end{proof}

Although rotational holonomy has an a priori dependence on the angles, it is surprisingly independent of $\theta \in \mathcal{A}\iota$:

\begin{thm}\label{thm:constant_holonomy_on_invariants}
Let $\theta, \theta' \in \mathcal A^\iota$. Then $\mathrm{Hol}^\theta = \mathrm{Hol}^{\theta'}$.
\end{thm}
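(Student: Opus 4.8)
The plan is to reduce the claim to a connectedness argument: since $\mathcal A^\iota$ is convex, hence connected, by Lemma~\ref{lem:convex_invariant_set}, it suffices to show that the map $\theta \mapsto \mathrm{Hol}^\theta$ is \emph{locally constant} on $\mathcal A^\iota$. Combined with Lemma~\ref{lem:order_2_holonomy}, which tells us $\mathrm{Hol}^\theta(\alpha) \in \{\pm 1\}$ for every $\alpha \in H_1(\Gamma)$ and every $\theta \in \mathcal A^\iota$, this will finish the proof: a continuous map from a connected space to the discrete set $\{\pm 1\}^{H_1(\Gamma)} \subset \mathrm{Hom}(H_1(\Gamma),\C^*)$ is constant. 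So the real content is the continuity (equivalently, discreteness of the image forces local constancy) of $\theta \mapsto \mathrm{Hol}^\theta$ restricted to $\mathcal A^\iota$.

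First I would note that $\theta \mapsto \mathrm{Hol}^\theta(\alpha)$ is manifestly continuous as a function of $\theta \in \mathcal A$ for each fixed $\alpha$: it is built from $R^\theta$ and $D^\theta$ (Definition~\ref{contribution defn}), which are compositions of continuous functions ($\exp(i\,\cdot\,)$, $\sin$, division by nonvanishing $\sin\theta(\cdot)$ since all angles lie in $(0,\pi)$), and of the injection $\phi$ (Definition preceding Lemma on injectivity of $\phi$), followed by the homomorphism $D^\theta\cdot R^\theta$. So $\theta\mapsto\mathrm{Hol}^\theta$ is a continuous map $\mathcal A \to \mathrm{Hom}(H_1(\Gamma),\C^*)$. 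Restricting to the connected set $\mathcal A^\iota$, Lemma~\ref{lem:order_2_holonomy} shows the image lands in $\prod_{j}\{\pm1\}$ once we evaluate on a finite generating set $\alpha_1,\dots,\alpha_r$ of $H_1(\Gamma)$ (which is finitely generated since $\Gamma$ is a finite graph). A continuous map from a connected space to a discrete space is constant; hence $\mathrm{Hol}^\theta(\alpha_k)$ is independent of $\theta\in\mathcal A^\iota$ for each $k$, and since the $\alpha_k$ generate, $\mathrm{Hol}^\theta = \mathrm{Hol}^{\theta'}$ for all $\theta,\theta'\in\mathcal A^\iota$.

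The one subtlety to verify carefully is that $\mathcal A^\iota$ is nonempty and connected so that the argument has content: connectedness is immediate from convexity (Lemma~\ref{lem:convex_invariant_set}), and convex subsets of $\mathcal A$ are path-connected. If one wanted an argument internal to $\mathcal A^\iota$ rather than invoking Lemma~\ref{lem:order_2_holonomy} as a black box, one could instead observe directly that $\mathrm{Hol}^\theta_D \equiv 1$ on $\mathcal A^\iota$ forces the image of $\theta\mapsto\mathrm{Hol}^\theta$ to lie in the compact group $S^1$, and then the finite-order constraint from $\mathrm{Hol}^\theta\circ\iota = (\mathrm{Hol}^\theta)^{-1}$ pins it to $\{\pm1\}$; either way the discreteness of the image is the crux.

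I do not expect any serious obstacle here: once Lemma~\ref{lem:order_2_holonomy} is in hand, the theorem is essentially a formal consequence of ``continuous $+$ discrete image $+$ connected domain $\Rightarrow$ constant.'' The only thing to be careful about is to phrase the image correctly as lying in the \emph{discrete} subset $\{\pm1\}^r$ of the Lie group $\mathrm{Hom}(H_1(\Gamma),\C^*) \cong (\C^*)^r$, and to use that $H_1(\Gamma)$ is finitely generated so that ``locally constant on each generator'' upgrades to ``the whole homomorphism is constant.''
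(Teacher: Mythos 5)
Your proof is correct and follows essentially the same route as the paper: continuity of $\theta \mapsto \mathrm{Hol}^\theta$ from the explicit formulas, discreteness of the image on $\mathcal A^\iota$ via Lemma~\ref{lem:order_2_holonomy}, and connectedness of $\mathcal A^\iota$ from Lemma~\ref{lem:convex_invariant_set}, giving constancy. The only difference is that you spell out the finite generation of $H_1(\Gamma)$ and the continuity argument in more detail than the paper does.
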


\begin{proof}
Consider the representation $\mathrm{Hol}: \mathcal A \to \Hom(H_1(\Gamma), \C^*) \cong (\C^*)^{\mathrm{rk} H_1(\Gamma)}$ sending $\theta$ to $\mathrm{Hol}^\theta$. From the formulas in Definition~\ref{contribution defn}, we see that $\mathrm{Hol}$ is continuous. From Lemma~\ref{lem:order_2_holonomy} it follows that the image of $\mathcal A$ under $\mathrm{Hol}$ is discrete. Finally, Lemma~\ref{lem:convex_invariant_set} implies that $\mathcal A^\iota$ is connected, and so we conclude that $\mathrm{Hol}|_{\mathcal A^\iota}$ is constant.
\end{proof}

An $\iota$-invariant angle assignment need not have trivial holonomy, but as an immediate corollary of Theorem~\ref{thm:constant_holonomy_on_invariants} we have the following.
\begin{cor}\label{cor:invariant_inside_trivial}
    If $\mathcal \mathcal A^\iota \cap \mathcal{V}$ is nonempty, then $\mathcal A^\iota \subseteq \mathcal V$.
\end{cor}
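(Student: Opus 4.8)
The plan is to deduce this immediately from Theorem~\ref{thm:constant_holonomy_on_invariants}, which says that the holonomy map $\theta \mapsto \mathrm{Hol}^\theta$ takes a single constant value on all of $\mathcal{A}^\iota$. First I would invoke the hypothesis to fix a witness $\theta_0 \in \mathcal{A}^\iota \cap \mathcal V$. By the definition of the locus of trivial holonomy, the character $\mathrm{Hol}^{\theta_0} \colon H_1(\Gamma) \to \C^*$ is the trivial one. Then I would take an arbitrary $\theta \in \mathcal{A}^\iota$ and apply Theorem~\ref{thm:constant_holonomy_on_invariants} to the pair $\theta, \theta_0$, concluding $\mathrm{Hol}^\theta = \mathrm{Hol}^{\theta_0}$, which is trivial; hence $\theta \in \mathcal V$. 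Since $\theta$ was arbitrary, this gives $\mathcal{A}^\iota \subseteq \mathcal V$.

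I do not expect any real obstacle here: all of the work has already been done in Theorem~\ref{thm:constant_holonomy_on_invariants} (whose own proof combines the order-$2$ constraint of Lemma~\ref{lem:order_2_holonomy}, forcing the image of $\mathrm{Hol}$ to be discrete, with the connectedness of $\mathcal{A}^\iota$ from Lemma~\ref{lem:convex_invariant_set}). The corollary is just the remark that a map which is constant on $\mathcal{A}^\iota$ and which hits the trivial character at even one point must be identically trivial there. The only subtlety worth flagging in the write-up is why the nonemptiness hypothesis is essential: constancy by itself pins down $\mathrm{Hol}^\theta$ to a single value but gives no information about \emph{which} value, so a point of $\mathcal{A}^\iota$ known to lie in $\mathcal V$ is exactly what identifies that value as the trivial character.
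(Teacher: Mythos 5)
Your proof is correct and is exactly the argument the paper intends; the paper simply labels this an immediate corollary of Theorem~\ref{thm:constant_holonomy_on_invariants} without writing out the two-line deduction you give.
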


\subsection{Triangle matchings and relative homology}\label{subsec:matchings_rel_homology}
Every half-edge $(f, e)$ determines a unique class $r(f, e) \in H_1(S_g, Z; \Z)$ that we define in Definition~\ref{defn:relative_homology_map}.
In Lemma~\ref{lem:Sam's_favorite_formula}, we demonstrate the identity $r(f,e) = -r(\iota(f,e))$, which we will use in Lemma~\ref{lem:image_of theta_inside_invariant} to prove that the image $\Theta(\Delta(\tau))$ of an iso-triangulable region is $\iota$-invariant whenever $\tau$ admits a triangle matching.

\begin{defn}\label{defn:relative_homology_map}
Given $(f, e) \in C_1(\Gamma)$, let $r(f, e) \in H_1(S_g, Z; \Z)$ be the relative homology class of the edge $e$ of $\tau$, oriented according to the counterclockwise orientation of the face $f$.
\end{defn}
Observe that if $e$ belongs to the faces $f$ and $f'$ of $\tau$, then $r(f, e) = -r(f', e)$.

\begin{lem}\label{lem:Sam's_favorite_formula}
Let $\iota: H \to H$ be a triangle matching. Then $r(f,e) = -r(\iota(f,e))$ for every $(f,e) \in H$.
\end{lem}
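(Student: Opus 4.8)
The statement $r(f,e) = -r(\iota(f,e))$ is about classes in $H_1(S_g, Z; \Z)$, so I want to reduce it to a statement about $H_1(\Gamma)$, where the defining property $\iota|_{H_1(\Gamma)} = -\mathrm{id}$ of a triangle matching (Property~(\ref{restriction as -1})) directly applies. The natural bridge is the long exact sequence of the pair $(S_g, Z)$, which (using the deformation retraction $S_g \setminus Z \simeq \Gamma$ of Remark~\ref{rem:deformation_retraction}) relates $H_1(S_g, Z; \Z)$ to $H_1(\Gamma) = Z_1(\Gamma)$ and $H_0(Z)$. The key geometric input is that the edge $e$ of $\tau$, as a relative $1$-cycle, is ``dual'' to the half-edge $(f,e)$ of $\Gamma$ in a way that is compatible with $\iota$: crossing the edge $e$ once in the direction prescribed by the orientation of $f$ corresponds exactly to the half-edge $(f,e) \in C_1(\Gamma)$.

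\emph{Step 1: Set up the intersection/duality pairing.} On the closed surface $S_g$, there is a Poincaré–Lefschetz duality pairing between $H_1(S_g, Z; \Z)$ and $H_1(S_g \setminus Z; \Z) \cong H_1(\Gamma)$ given by algebraic intersection number. Under this pairing, $r(f,e)$ corresponds to the cochain on $\Gamma$ that counts signed crossings of the edge $e$; concretely, a simple cycle $\alpha = \sum_j\big((e_{j-1}, f_j) + (f_j, e_j)\big)$ in $\Gamma$ meets the edge $e$ of $\tau$ precisely once for each half-edge $(f_j, e)$ appearing in $\alpha$ with $e_{j-1} = e$ or $e_j = e$, with a sign determined by the orientation convention. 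So I would prove: for every simple cycle $\alpha \in H_1(\Gamma)$,
\[
r(f,e) \cdot i_*(\alpha) = \langle (f,e)^\vee, \alpha \rangle,
\]
where $(f,e)^\vee$ is the functional on $C_1(\Gamma)$ dual to the basis half-edge $(f,e)$ (suitably signed using $(e,f) = -(f,e)$).

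\emph{Step 2: Apply the triangle matching.} Since $\iota$ commutes with the $\Z/3\Z$-action and is a bijection of $H$, its induced map on $C_1(\Gamma)$ permutes the basis half-edges; moreover, on $H_1(\Gamma)$ it acts as $-\mathrm{id}$. Dualizing, $\iota^\vee$ sends $(f,e)^\vee$ to $(\iota(f,e))^\vee$ (up to the sign bookkeeping), and for any $\alpha \in H_1(\Gamma)$,
\[
\langle (\iota(f,e))^\vee, \alpha \rangle = \langle (f,e)^\vee, \iota^{-1}(\alpha) \rangle = \langle (f,e)^\vee, -\alpha \rangle = -\langle (f,e)^\vee, \alpha \rangle.
\]
Combining with Step 1, $r(\iota(f,e)) \cdot i_*(\alpha) = -\, r(f,e) \cdot i_*(\alpha)$ for all $\alpha$.

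\emph{Step 3: Conclude via nondegeneracy.} The intersection pairing $H_1(S_g, Z; \Z) \times H_1(S_g \setminus Z; \Z) \to \Z$ is a perfect pairing (this is Lefschetz duality for the pair, with $S_g \setminus Z$ homotopy equivalent to $\Gamma$ and $Z$ a finite set of points; since $i_*: H_1(\Gamma) \to H_1(S_g \setminus Z; \Z)$ is an isomorphism, the classes $i_*(\alpha)$ as $\alpha$ ranges over simple cycles span $H_1(S_g \setminus Z; \Z)$). Hence $r(\iota(f,e)) + r(f,e)$ pairs trivially with everything in $H_1(S_g \setminus Z; \Z)$, which forces $r(\iota(f,e)) = -r(f,e)$, as desired.

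\emph{Main obstacle.} The one genuinely delicate point is Step 1: pinning down the signs so that the algebraic intersection of $r(f,e)$ with $i_*(\alpha)$ really is the coefficient of $(f,e)$ in $\alpha$ (and not its negative or the coefficient of $\iota(f,e)$). This requires carefully matching the counterclockwise-orientation convention for $r(f,e)$ (Definition~\ref{defn:relative_homology_map}) with the convention that $(f,e)$ is directed ``from $f$ to $e$'' and the embedding $\Gamma \hookrightarrow S_g \setminus Z$ of Remark~\ref{rem:deformation_retraction}, where $(f,e)$ is a small arc from the barycenter of $f$ crossing the edge $e$. Once the local picture near a single edge $e$ is drawn — the edge $e$ is shared by two faces $f, f'$, and the two half-edges $(f,e)$, $(f',e) = -(f,e)$ cross it in opposite directions, matching $r(f,e) = -r(f',e)$ — the sign is forced, and the global statement follows by linearity over simple cycles. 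An alternative, perhaps cleaner, route avoids intersection theory entirely: directly build a chain-level map realizing $r$ as the composite $C_1(\Gamma) \hookrightarrow C_1(S_g \setminus Z) \to C_1(S_g, Z)$ coming from the embedding, observe that each half-edge $(f,e)$ maps to (a cycle representing) $r(f,e)$, and then the identity $r(f,e) = -r(\iota(f,e))$ is immediate from $\iota|_{H_1(\Gamma)} = -\mathrm{id}$ — except that this needs $(f,e)$ itself, not just cycles, to land consistently in relative homology, which is exactly the subtlety Lemma~\ref{lem:Sam's_favorite_formula} is isolating. I expect the paper's proof to use the second route together with the observation that although individual half-edges are not cycles, the identity can be checked on a generating set of relative cycles built from the $r(f,e)$.
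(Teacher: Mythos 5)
Your proposal is correct and matches the paper's argument: both pair $r(f,e)$ against simple cycles $\alpha\in H_1(\Gamma)$ via the (nondegenerate, by Poincar\'e duality) algebraic intersection pairing, identify $\langle\alpha, r(f,e)\rangle$ with the coefficient of the basis half-edge $(f,e)$ in $\alpha$, and then exploit that $\iota$ permutes the basis of $C_1(\Gamma)$ while acting as $-\mathrm{id}$ on $H_1(\Gamma)$. (Your closing guess that the paper would instead take the ``chain-level'' route is off --- it uses the intersection-pairing argument exactly as in your Steps 1--3 --- but this doesn't affect the correctness of what you wrote.)
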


\begin{proof}
Recall that there is a natural isomorphism $i_* : H_1(\Gamma) \xrightarrow{\sim} H_1(S_g \setminus Z; \Z)$.
Therefore, the algebraic intersection pairing on $H_1(S_g \setminus Z; \Z) \times H_1(S_g, Z; \Z)$ gives a pairing
\[ \la \cdot, \cdot \ra: H_1(\Gamma) \times H_1(S_g, Z; \Z) \to \Z, \]
which is nondenerate by Poincar\'e duality.

Let $\alpha =  \sum \left((e_{j-1}, f_j) + (f_j, e_j)\right)$ be a simple cycle.
Letting $[f, e](\alpha)$ denote the coefficient of $(f,e)$ in this sum, observe that
\[ \la \alpha, r(f,e) \ra = [f, e](\alpha). \]
Since $\iota(\alpha) = -\alpha$, we have that
\[ [f, e](\alpha) = [\iota(f, e)](\iota(\alpha)) = [\iota(f, e)](-\alpha) = -[\iota(f, e)](\alpha). \]
We have thus showed that
\[ \la \alpha, r(f, e) \ra = -\la \alpha, r(\iota(f, e)) \ra \]
for every simple cycle $\alpha$.
Since $H_1(\Gamma)$ is spanned by simple cycles and the intersection pairing is nondegenerate, we conclude that $r(f, e) = -r(\iota(f, e))$.
\end{proof}

For example, the horizontal arcs $\gamma \coloneqq -r(h_1)$ and $\eta \coloneqq r(h_1')$ in Figure~\ref{fig:square_l} are homologous.

\section{Proof of Theorem~\texorpdfstring
    {\hyperref[thm:mainA]{\ref*{thm:mainA}}}
    {\ref*{thm:mainA}}}
    \label{sec:proof_of_main_theorem}
Fix an iso-triangulable region $\Delta = \Delta(\tau)$ with a triangle matching $\iota: H(\tau) \to H(\tau)$.
Lemma~\ref{lem:image_of_angle_map} and Corollary~\ref{cor:invariant_inside_trivial} together imply that $\Theta(\Delta) \cong \mathcal V \supseteq \mathcal A^\iota$.
In Lemma~\ref{lem:image_of theta_inside_invariant}, we establish the reverse inclusion $\mathcal V \subseteq \mathcal{A}^\iota$.
We conclude our proof of Theorem~\ref{thm:mainA} in \S\ref{subsec:iso_Delaunay_regions} by extending the result to iso-Delaunay regions in Theorem~\ref{thm:main_theorem_precise}.

\subsection{Iso-triangulable regions}\label{subsec:iso_triangulable_regions}

\begin{lem}\label{lem:image_of theta_inside_invariant}
If $\tau$ has a triangle matching $\iota : H(\tau) \to H(\tau)$, then $\mathcal V \subseteq \mathcal{A}^\iota$.
\end{lem}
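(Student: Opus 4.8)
The goal is the inclusion $\mathcal V \subseteq \mathcal A^\iota$: every angle assignment with trivial combinatorial holonomy is automatically $\iota$-invariant. The plan is to exploit Lemma~\ref{lem:image_of_angle_map}, which identifies $\mathcal V$ with the image $\Theta(\Delta)$, so it suffices to show that $\Theta(M)$ is $\iota$-invariant for every $M \in \Delta$. Fix such an $M$ and a representative translation surface $(X, \omega)$ for which $\tau$ is a geometric triangulation. For a half-edge $(f, e)$, the value $\Tilde\Theta_{(f; e, e+1)}(M)$ is the Euclidean angle of the triangle $f$ between the edges $e$ and $e+1$; concretely, as in the proof of Lemma~\ref{lem:image_of_angle_map}, it is the argument of the ratio of the periods $\int_{r(f, e+1)} \omega$ and $\int_{r(f, e)} \omega$ (suitably oriented from the common vertex). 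So I would reduce $\iota$-invariance of all the angles to a statement purely about these periods.

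The key input is Lemma~\ref{lem:Sam's_favorite_formula}: since $\iota$ is a triangle matching, $r(f, e) = -r(\iota(f, e))$ in $H_1(S_g, Z; \Z)$ for every half-edge. Integrating $\omega$ over these classes gives
\[
\int_{r(f, e)} \omega = -\int_{r(\iota(f, e))} \omega.
\]
Because $\iota$ commutes with the $\Z/3\Z$-action (Property~(\ref{commutativity}) of Definition~\ref{defn:triangle_matching}), writing $(f', e') = \iota(f, e)$ we also have $\iota(f, e+1) = (f', e'+1)$, so the same period identity holds for the $(e+1)$-edges: $\int_{r(f, e+1)} \omega = -\int_{r(f', e'+1)} \omega$. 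The angle $\Tilde\Theta_{(f; e, e+1)}(M)$ is determined as the (counterclockwise) angle from the direction of the vector $\int_{r(f,e)}\omega$ to the direction of $\int_{r(f,e+1)}\omega$; negating both period vectors rotates each direction by $\pi$ and hence leaves the angle between them unchanged. Therefore $\Tilde\Theta_{(f; e, e+1)}(M) = \Tilde\Theta_{(f'; e', e'+1)}(M) = \Tilde\Theta_{\iota(f; e, e+1)}(M)$, which is exactly the statement that $\Theta(M) \circ \iota = \Theta(M)$, i.e.\ $\Theta(M) \in \mathcal A^\iota$. Since every element of $\mathcal V$ is $\Theta(M)$ for some $M \in \Delta$ by Lemma~\ref{lem:image_of_angle_map}, this gives $\mathcal V \subseteq \mathcal A^\iota$.

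The main thing to get right is the bookkeeping around orientations: one must check that the two edges $e$ and $e+1$ of $f$ are oriented consistently (outward from their common vertex, or both according to the face orientation) so that the recipe ``angle $=$ argument of period ratio'' is literally correct and matches the recipe used for $\iota(f, e)$, and that the correspondence $\iota(f; e, e+1) = (f'; e', e'+1)$ from Definition~\ref{defn:angle_matching} lines up the \emph{same} pair of edges. This is the only subtle point; once the period identity of Lemma~\ref{lem:Sam's_favorite_formula} is in hand, the fact that multiplication by $-1$ on $\C$ preserves angles between vectors makes the conclusion immediate. I would also remark that the hypothesis ``$\Delta_{\mathcal D}(\tau)$ nonempty'' (hence $\Delta(\tau)$ nonempty) is used implicitly here, since the argument produces an element $M \in \Delta$; combined with the reverse inclusion already noted at the start of \S\ref{sec:proof_of_main_theorem}, this yields $\Theta(\Delta) = \mathcal V = \mathcal A^\iota$.
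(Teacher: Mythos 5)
Your proposal is correct and follows essentially the same route as the paper: identify $\mathcal V$ with $\Theta(\Delta)$ via Lemma~\ref{lem:image_of_angle_map}, invoke Lemma~\ref{lem:Sam's_favorite_formula} to get the period identity $\int_{r(f,e)}\omega = -\int_{r(\iota(f,e))}\omega$, and conclude from the fact that negating both period vectors preserves the angle between them. The paper's proof is terser (it leaves the reduction to period vectors implicit), but the underlying argument and the two key inputs are the same; you have simply spelled out the details.
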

\begin{proof}
Fix a combinatorial angle $(f; e, e+1)$, and write $(f', e') = \iota(f, e)$.
Recall that Lemma~\ref{lem:Sam's_favorite_formula} implies 
\[ r(f', e') = -r(f, e). \]
Since $\iota$ respects the orientation of the faces of $\tau$, we conclude that $\theta(f; e, e+1) = \theta(f'; e', e' + 1)$ and $\theta$ is $\iota$-invariant.
\end{proof}

\begin{cor}\label{cor:trivial_equals_invariant}
If $\tau$ has a triangle matching, then $\mathcal{V} = \mathcal{A}^\iota$.
\end{cor}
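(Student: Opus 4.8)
The plan is to derive the equality $\mathcal{V} = \mathcal{A}^\iota$ as a purely formal consequence of Lemma~\ref{lem:image_of theta_inside_invariant}, Corollary~\ref{cor:invariant_inside_trivial}, and Lemma~\ref{lem:image_of_angle_map}; no new geometric input is needed. The inclusion $\mathcal{V} \subseteq \mathcal{A}^\iota$ is exactly the content of Lemma~\ref{lem:image_of theta_inside_invariant}, which applies since $\tau$ is assumed to carry a triangle matching $\iota$.

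For the reverse inclusion $\mathcal{A}^\iota \subseteq \mathcal{V}$, I would invoke Corollary~\ref{cor:invariant_inside_trivial}: it suffices to verify that $\mathcal{A}^\iota \cap \mathcal{V}$ is nonempty. Here I would use that $\mathcal{V}$ itself is nonempty --- by the standing hypothesis of this section that $\Delta = \Delta(\tau)$ is an iso-triangulable region, Lemma~\ref{lem:image_of_angle_map} identifies $\mathcal{V}$ with the homeomorphic image $\Theta(\Delta)$, so $\mathcal{V} \neq \emptyset$. Since we have already established $\mathcal{V} \subseteq \mathcal{A}^\iota$, this forces $\mathcal{A}^\iota \cap \mathcal{V} = \mathcal{V} \neq \emptyset$, and Corollary~\ref{cor:invariant_inside_trivial} then yields $\mathcal{A}^\iota \subseteq \mathcal{V}$. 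Combining the two inclusions gives the claimed equality.

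The only point that is not entirely automatic is the nonemptiness hypothesis in Corollary~\ref{cor:invariant_inside_trivial}: if $\Delta(\tau)$ were empty, then $\mathcal{V}$ would be empty while $\mathcal{A}^\iota$ would still contain the barycentric assignment (all angles $\pi/3$), so the equality would fail. This is precisely why the statement presupposes a genuine iso-triangulable region, a hypothesis that in the application to Theorem~\ref{thm:main_theorem_precise} is furnished by the assumption that $\Delta_{\mathcal D}(\tau)$ is nonempty. Beyond tracking this inclusion chain, I anticipate no obstacle.
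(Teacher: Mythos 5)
Your proof is correct and matches the paper's intended argument: combine the inclusion $\mathcal V \subseteq \mathcal A^\iota$ from Lemma~\ref{lem:image_of theta_inside_invariant} with the reverse inclusion supplied by Corollary~\ref{cor:invariant_inside_trivial}. You are in fact slightly more careful than the paper's prose about the logical ordering: the paper's outline in \S\ref{sec:proof_of_main_theorem} states $\mathcal V \supseteq \mathcal A^\iota$ first (citing Lemma~\ref{lem:image_of_angle_map} and Corollary~\ref{cor:invariant_inside_trivial}) and then Lemma~\ref{lem:image_of theta_inside_invariant} second, but as you observe, the nonemptiness hypothesis in Corollary~\ref{cor:invariant_inside_trivial} is most cleanly discharged by first establishing $\mathcal V \subseteq \mathcal A^\iota$, so that $\mathcal A^\iota \cap \mathcal V = \mathcal V \neq \emptyset$ follows from $\Delta(\tau) \neq \emptyset$ via Lemma~\ref{lem:image_of_angle_map}. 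Your remark that the statement would fail for $\Delta(\tau) = \emptyset$ is also correct and explains why the standing hypothesis of the section (and of Theorem~\ref{thm:main_theorem_precise}) is required.
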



\subsection{Iso-Delaunay regions}\label{subsec:iso_Delaunay_regions}
Given $e \in E$ with $e \subset f, f'$ and $\theta \in \mathcal A$, we define the \emph{sum of} $\theta$\emph{-angles opposite} $e$ by
\[ \theta(e) \coloneqq \theta(f; e+1, e+2) + \theta(f'; e+1, e+2) \in \Z\la A \ra. \]
The following is a fact of Euclidean geometry.

\begin{prop}\label{Euclidean delaunay}
Given a triangle $\triangle ABC$ in the plane, let $D$ be a point such that $A$ and $D$ lie on opposite sides of the line $\overleftrightarrow{BC}$. Then $D$ lies outside the circumcircle of $\triangle ABC$ if and only if $\angle(\overline{AB}, \overline{AC}) + \angle(\overline{DC}, \overline{DB}) < \pi$.
\end{prop}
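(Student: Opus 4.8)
The plan is to reduce the statement to the classical inscribed-angle theorem by a case analysis on the position of $D$ relative to the circumcircle $\mathcal{C}$ of $\triangle ABC$. First I would fix notation: let $O$ be the center of $\mathcal{C}$, let $\rho$ be its radius, and write $\beta = \angle BAC = \angle(\overline{AB}, \overline{AC})$ and $\delta = \angle BDC = \angle(\overline{DC}, \overline{DB})$; both are well-defined in $(0,\pi)$ since $A$ and $D$ are not on the line $\overleftrightarrow{BC}$. The inscribed-angle theorem says that the angle subtended by the chord $BC$ from a point on the arc of $\mathcal{C}$ lying on the same side of $\overleftrightarrow{BC}$ as $A$ equals $\beta$, and from a point on the opposite arc equals $\pi - \beta$. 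Since $D$ lies on the opposite side of $\overleftrightarrow{BC}$ from $A$, the relevant comparison arc for $D$ is the one subtending angle $\pi - \beta$.

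The core step is then a monotonicity observation: for a point $P$ on a fixed side of the line $\overleftrightarrow{BC}$, the angle $\angle BPC$ is a strictly decreasing function of the distance from $P$ to that line in the following precise sense — $P$ lies inside $\mathcal{C}$, on $\mathcal{C}$, or outside $\mathcal{C}$ according as $\angle BPC$ is greater than, equal to, or less than the inscribed angle cut off by $BC$ on the arc of $\mathcal{C}$ on $P$'s side. I would prove this by considering the ray from $B$ through $P$ (or a continuity/intermediate-value argument sweeping $P$ along that ray): as $P$ moves away from $\overleftrightarrow{BC}$, the circle through $B$, $C$, $P$ grows, and $\angle BPC$ decreases, crossing the value $\pi - \beta$ exactly when $P$ crosses $\mathcal{C}$. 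Applying this to $P = D$: $D$ lies outside $\mathcal{C}$ iff $\delta = \angle BDC < \pi - \beta$, which rearranges to $\beta + \delta < \pi$, exactly the claim.

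The main obstacle I expect is making the monotonicity step fully rigorous without hand-waving about "the circle growing." The cleanest route is probably to avoid moving circles entirely and instead argue directly: let $\mathcal{C}'$ be the circle through $B$, $C$, $D$; it meets $\overleftrightarrow{BC}$ in exactly $B$ and $C$, so its arc on $D$'s side lies entirely on one side of $\overleftrightarrow{BC}$, and $D$ is outside $\mathcal{C}$ iff this arc lies outside $\mathcal{C}$ (equivalently, iff the arc of $\mathcal{C}'$ on $D$'s side is "flatter," i.e. subtends a smaller angle) — this can be pinned down by comparing the two circular arcs through $B$ and $C$ on the same side, using that two distinct circles meet in at most two points so the arcs are nested. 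Then $\angle BDC$ is the inscribed angle of $\mathcal{C}'$ on that arc, and the comparison of inscribed angles of $\mathcal{C}$ and $\mathcal{C}'$ on the $D$-side of $\overleftrightarrow{BC}$ gives the inequality. A slick alternative, if one prefers a computational proof, is to place $B$ and $C$ symmetrically on a coordinate axis and use the extended law of sines: $D$ is outside $\mathcal{C}$ iff the circumradius of $\triangle BCD$ exceeds $\rho$ in the appropriate signed sense, and $|BC| = 2\rho \sin\beta = 2 R_{BCD}\sin\delta$ with the sign of the center's position encoding inside/outside, which again yields $\sin\delta < \sin\beta$ together with the side condition forcing $\beta + \delta < \pi$.
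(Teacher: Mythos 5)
The paper does not actually prove Proposition~\ref{Euclidean delaunay}; it is stated as a known fact of Euclidean geometry with no argument supplied, so there is no ``paper's proof'' to compare against. Your primary argument --- the inscribed-angle theorem plus the observation that the circular arcs through $B$ and $C$ on the $D$-side of $\overleftrightarrow{BC}$ are pairwise nested (since two distinct circles already meeting at $B$ and $C$ cannot meet again) and therefore foliate that open half-plane by level sets of $\angle BPC$, with the arc of the circumcircle separating the outside (smaller subtended angle) from the inside (larger) --- is correct and is the standard textbook proof; the reduction to $\delta < \pi - \beta$ is immediate.

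One small caution on your alternative via the extended law of sines: from $2\rho\sin\beta = |BC| = 2R_{BCD}\sin\delta$ one gets $\sin\delta < \sin\beta$ when $R_{BCD} > \rho$, but $\sin\delta < \sin\beta$ is \emph{not} by itself equivalent to $\beta + \delta < \pi$ (since $\sin$ is not monotone on $(0,\pi)$, it only gives $\delta < \beta$ or $\delta > \pi - \beta$), and moreover $R_{BCD} > \rho$ is not equivalent to $D$ lying outside $\mathcal{C}$ without also tracking which side of $\overleftrightarrow{BC}$ the two centers lie on. You gesture at this with ``in the appropriate signed sense,'' but as sketched that route still has nontrivial bookkeeping to fill in, so the arc-nesting argument you give first is the one to keep.
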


\begin{defn}
Let $\mathcal D \subset \mathcal A$ be the set of all $\theta \in \mathcal A$ such that $\theta(e) < \pi$ for all $e \in E$.
\end{defn}

The following is an immediate corollary to Proposition~\ref{Euclidean delaunay}.
See Figure~\ref{fig:delaunay}.

\begin{cor}\label{cor:delaunay_criterion}
Let $M = [(X, \omega)] \in \Delta$, and let $\theta = \Theta(M)$. Then $M \in \Delta_{\mathcal D}$ if and only if $\Theta(M) \in \mathcal D$.
\end{cor}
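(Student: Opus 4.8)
The plan is to derive the corollary from Proposition~\ref{Euclidean delaunay} via the standard reduction of the global empty-disk Delaunay condition to a local condition at each edge, and then to rewrite that local condition in angle coordinates.

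First I would recall the \emph{local Delaunay criterion}: a geometric triangulation $\tau$ of $(X,\omega)$ is a nondegenerate Delaunay triangulation if and only if, for every edge $e \in E(\tau)$ shared by faces $f$ and $f'$, upon developing the pair of triangles $f, f'$ isometrically into the plane $\mathbb{R}^2$ along $e$ (possible because a neighborhood of the closed segment $e$ avoids $Z$), the image of the vertex of $f'$ opposite $e$ lies strictly outside the circumcircle of the image of $f$. The equivalence of ``locally Delaunay at every edge'' with ``globally Delaunay'' is classical and holds verbatim for translation surfaces; I would cite the discussion in \cite[\S 2]{BobenkoSpringborn} (cf. Remark~\ref{rem:uniqueness}). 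Strictness of the inequality is exactly the condition that each Delaunay disk contains precisely three points of $Z$ in its preimage, i.e. nondegeneracy.

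Next I would translate the local criterion into angle coordinates. Fix $e$ with $e \subset f, f'$, and choose $(X,\omega) \in M$ realizing $\tau$ geometrically. Developing $f$ sends $e$ to a segment $\overline{BC}$ and the opposite vertex of $f$ to a point $A$; developing $f'$ across $\overleftrightarrow{BC}$ sends the opposite vertex of $f'$ to a point $D$ on the far side of $\overleftrightarrow{BC}$. Proposition~\ref{Euclidean delaunay} then says that $D$ lies strictly outside the circumcircle of $\triangle ABC$ precisely when
\[ \angle(\overline{AB},\overline{AC}) + \angle(\overline{DC},\overline{DB}) < \pi. \]
By construction $\angle(\overline{AB},\overline{AC})$ is the Euclidean angle of $f$ between its two edges other than $e$, which is $\theta(f; e+1, e+2)$ for $\theta = \Theta(M)$, and likewise $\angle(\overline{DC},\overline{DB}) = \theta(f'; e+1, e+2)$. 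Hence the local criterion at $e$ reads exactly $\theta(e) < \pi$. Combining over all edges, $M \in \Delta_{\mathcal D}$ holds iff the local criterion holds at every $e \in E$ iff $\theta(e) < \pi$ for every $e \in E$, which is the definition of $\theta \in \mathcal D$.

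The main obstacle is the first step: justifying that the global immersed-empty-disk definition (Definition~\ref{defn:delaunay_triangulation}) is equivalent to the edge-local condition, with nondegeneracy matching strict inequality. On the plane this is the familiar lemma underlying the Delaunay flip algorithm; on a translation surface one must additionally verify that an immersed empty disk realizing a face $f$ may be taken to be the lift of the circumscribed disk of $f$, and that emptiness of this disk propagates from the edge-local conditions by a connectedness argument avoiding the cone points. I would handle this by appealing to the corresponding discussion in \cite{BobenkoSpringborn}; the remainder of the argument is then elementary Euclidean geometry.
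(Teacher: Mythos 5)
Your argument is correct and makes explicit the reasoning the paper treats as immediate: the paper simply asserts Corollary~\ref{cor:delaunay_criterion} follows directly from Proposition~\ref{Euclidean delaunay}, whereas you spell out the standard global-to-local reduction (citing \cite{BobenkoSpringborn}, which the paper also invokes at Remark~\ref{rem:uniqueness}) and the identification of the two opposite angles with $\theta(f;e+1,e+2)$ and $\theta(f';e+1,e+2)$. Same approach, with the implicit steps filled in.
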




We are now ready to prove Theorem~\ref{thm:mainA}.

\begin{thm}[Theorem A]\label{thm:main_theorem_precise}
Let $\tau$ be a topological triangulation of $(S_g, Z)$, and suppose that there exists a triangle matching $\iota: H(\tau) \to H(\tau)$.
Then $\Theta|_{\Delta_{\mathcal D}(\tau)}$ is a homeomorphism from $\Delta_{\mathcal D}(\tau)$ onto a convex subspace of Euclidean space.
Consequently, $\Delta_{\mathcal D}(\tau)$ is a connected contractible space.
\end{thm}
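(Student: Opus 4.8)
The plan is to assemble the proof from the pieces already developed, since almost all the work has been done.  First I would recall that by Lemma~\ref{lem:image_of_angle_map}, the angle map $\Theta$ restricts to a homeomorphism from $\Delta = \Delta(\tau)$ onto the locus $\mathcal V$ of trivial holonomy inside $\mathcal A$.  By Corollary~\ref{cor:trivial_equals_invariant}, the hypothesis that $\tau$ admits a triangle matching $\iota$ gives the identification $\mathcal V = \mathcal A^\iota$.  Hence $\Theta$ restricts to a homeomorphism $\Delta \xrightarrow{\sim} \mathcal A^\iota$.

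Next I would intersect with the Delaunay condition.  By Corollary~\ref{cor:delaunay_criterion}, a class $M \in \Delta$ lies in $\Delta_{\mathcal D}(\tau)$ if and only if $\Theta(M) \in \mathcal D$, so $\Theta$ restricts to a homeomorphism $\Delta_{\mathcal D}(\tau) \xrightarrow{\sim} \mathcal A^\iota \cap \mathcal D$.  It then remains to check that $\mathcal A^\iota \cap \mathcal D$ is a convex subset of Euclidean space.  Recall that $\mathcal A$ is an affine-linear slice of $(0,\pi)^{3n} \subset \R^{3n}$ cut out by the angle-sum equations; $\mathcal A^\iota$ is cut out inside $\mathcal A$ by the further homogeneous linear equations $\theta(f;e,e+1) - \theta(\iota(f;e,e+1)) = 0$ (Lemma~\ref{lem:convex_invariant_set}), so $\mathcal A^\iota$ is an intersection of $\mathcal A$ with a linear subspace, hence convex; and $\mathcal D$ is cut out by the strict affine-linear inequalities $\theta(e) < \pi$ for $e \in E$, which define an open convex polytope.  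The intersection of convex sets is convex, so $\mathcal A^\iota \cap \mathcal D$ is convex.  Composing, $\Theta|_{\Delta_{\mathcal D}(\tau)}$ is a homeomorphism onto a convex subset of a Euclidean space (namely the affine span of $\mathcal A$), which is what we want.

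Finally, contractibility and connectedness follow immediately: a convex subset of Euclidean space is star-shaped about any of its points, hence contractible, and in particular path-connected; and $\Delta_{\mathcal D}(\tau)$ is homeomorphic to it.  (The hypothesis in Theorem~\ref{thm:mainA} that $\Delta_{\mathcal D}(\tau)$ is nonempty is what guarantees the convex set in question is nonempty, so that the word ``contractible'' is not vacuous; in the statement of Theorem~\ref{thm:main_theorem_precise} this case is handled automatically.)

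The only step requiring genuine care is the equality $\mathcal V = \mathcal A^\iota$, i.e. the two inclusions $\mathcal A^\iota \subseteq \mathcal V$ (Corollary~\ref{cor:invariant_inside_trivial}, via Theorem~\ref{thm:constant_holonomy_on_invariants} on constancy of holonomy on the connected set $\mathcal A^\iota$, using that the image of $\mathcal A$ under the holonomy representation is discrete) and $\mathcal V \subseteq \mathcal A^\iota$ (Lemma~\ref{lem:image_of theta_inside_invariant}, which rests on the relative-homology identity $r(f,e) = -r(\iota(f,e))$ of Lemma~\ref{lem:Sam's_favorite_formula}).  Since both inclusions are already established in the preceding sections, the proof of Theorem~\ref{thm:main_theorem_precise} itself is essentially a matter of citing them in sequence and observing that a finite intersection of convex sets is convex; no new obstacle arises here.
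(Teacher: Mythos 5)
Your proof is correct and follows essentially the same route as the paper's: cite Lemma~\ref{lem:image_of_angle_map} and Corollary~\ref{cor:delaunay_criterion} to identify $\Theta(\Delta_{\mathcal D}(\tau))$ with $\mathcal D \cap \mathcal V$, replace $\mathcal V$ by $\mathcal A^\iota$ via Corollary~\ref{cor:trivial_equals_invariant}, and conclude by observing that $\mathcal D$ and $\mathcal A^\iota$ are each convex in the affine slice $\mathcal A$. The only additions you make beyond the paper's own argument are explicit remarks that the finite intersection of convex sets is convex and that a convex set is contractible, which the paper leaves implicit.
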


\begin{proof}
Since $\Theta$ is a homeomorphism from $\Delta(\tau)$ onto $\mathcal V$, it follows from Corollary~\ref{cor:delaunay_criterion} that $\Theta|_{\Delta_{\mathcal D}(\tau)}$ is a homeomorphism from $\Delta_{\mathcal D}(\tau)$ onto $\mathcal D \cap \mathcal V$. By Corollary~\ref{cor:trivial_equals_invariant}, we have $\mathcal D \cap \mathcal V = \mathcal D \cap \mathcal A^\iota$. The set $\mathcal D$ is a convex subset of $\mathcal A$, and the set $\mathcal{A}^\iota$ is convex by Lemma~\ref{lem:convex_invariant_set}.
We conclude that $\Theta(\Delta_{\mathcal D}(\tau)) = \mathcal D \cap \mathcal A^\iota$ is convex.
\end{proof}

\section{Examples of convex iso-Delaunay regions}\label{sec:examples}
In this section we give infinitely many examples of triangulations that admit triangle matchings, and hence by Theorem~\ref{thm:main_theorem_precise}, examples of convex iso-Delaunay regions.
In \S\ref{subsec:hyperelliptic}, we show that Delaunay triangulations of translation surfaces in hyperelliptic stratum components admit triangle matchings.
In \S\ref{subsec:connected_sum}, we define the connected sum $\Gamma_1 \# \Gamma_2$ of trivalent ribbon graphs and show that if $\Gamma_1$ and $\Gamma_2$ admit triangle matchings, then so does $\Gamma_1 \# \Gamma_2$.
In \S\ref{subsec:arboreal}, we combine the results of the previous subsections to show that a certain family of \emph{origamis} (also known as \emph{square-tiled surfaces}) called \emph{arboreal} origamis admits triangle matchings.

\subsection{Hyperelliptic examples}\label{subsec:hyperelliptic}
We will show in Lemma~\ref{lem:hyperelliptic_top} that hyperelliptic involutions acting simplicially on a triangulation yield examples of triangle matchings, from which Corollary~\ref{cor:hyperelliptic} follows.
Note that this family of examples includes the square L.
We start by recalling some definitions.

\begin{defn}\label{Hyperelliptic involution}
An orientation-preserving homeomophism $\iota: S_g \to S_g$ is a \emph{hyperelliptic involution} if $\iota_*(\beta) = -\beta$ for every $\beta \in H_1(S_g; \Z)$.
\end{defn}

\begin{defn}\label{hyperelliptic defn}
Let $\tau$ be a topological triangulation of $(S_g, Z)$, and let $\iota: S_g \to S_g$ be a hyperelliptic involution.
We say that $\tau$ is \emph{compatible with} $\iota$ if
\begin{enumerate}
\item \label{simplicial automorphism} the map $\iota$ induces a simplicial automorphism $\tau \to \tau$, and
\item \label{geometric hyperellipticity} either $\# Z= 1$, or $\# Z = 2$ and $\iota$ nontrivially permutes the elements of $Z$.
\end{enumerate}
\end{defn}

\begin{lem}\label{lem:hyperelliptic_top}
Suppose that $\tau$ is a topological triangulation of $(S_g, Z)$ that is compatible with a hyperelliptic involution $\iota: S_g \to S_g$.
Then $\iota$ induces a graph automorphism $\iota:\Gamma(\tau) \to \Gamma(\tau)$ such that $\iota: H(\tau) \to H(\tau)$ is a triangle matching.
\end{lem}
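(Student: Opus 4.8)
The goal is to verify that the simplicial action of $\iota$ on $\tau$ gives a map $\iota : H(\tau) \to H(\tau)$ satisfying the two axioms of a triangle matching (Definition~\ref{defn:triangle_matching}): that it commutes with the $\Z/3\Z$-action, and that it acts as $-1$ on $H_1(\Gamma(\tau)) \subseteq C_1(\Gamma(\tau))$. First I would unwind why $\iota$ induces a self-map of the dual graph. Since $\iota$ is a simplicial automorphism of $\tau$ by Definition~\ref{hyperelliptic defn}(\ref{simplicial automorphism}), it permutes $F(\tau)$ and permutes $E(\tau)$, preserving incidence; a half-edge $(f,e) \in H(\tau)$ is precisely an incident edge-face pair, so $\iota$ sends $(f,e)$ to $(\iota_F(f), \iota_E(e))$, a bona fide half-edge. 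For axiom (\ref{commutativity}), I would observe that $\iota$ is orientation-preserving on $S_g$, hence preserves the counterclockwise cyclic order on the three edges of each face $f$; by Definition~\ref{defn:dual_graph} this cyclic order is exactly the $\Z/3\Z$-action on $H_f$, so $\iota$ intertwines these actions: $\iota((f,e+1)) = (\iota_F(f), \iota_E(e)) + 1$. That handles the first axiom.

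The substantive point is axiom (\ref{restriction as -1}): the induced automorphism of $C_1(\Gamma(\tau)) = \Z\la H\ra$ restricts to $-\Id$ on $H_1(\Gamma(\tau))$. Here I would use the isomorphism $i_* : H_1(\Gamma(\tau)) \xrightarrow{\sim} H_1(S_g \setminus Z; \Z)$ from Remark~\ref{rem:deformation_retraction}, together with the compatibility of $\iota$ with the deformation retraction (since $\iota$ preserves $Z$ and acts simplicially, it commutes up to isotopy with the retraction $S_g \setminus Z \twoheadrightarrow \Gamma(\tau)$). So it suffices to show $\iota_* = -\Id$ on $H_1(S_g \setminus Z; \Z)$. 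The defining property of a hyperelliptic involution (Definition~\ref{Hyperelliptic involution}) gives $\iota_* = -\Id$ on $H_1(S_g; \Z)$; the task is to lift this to the punctured surface. I would use the long exact sequence of the pair $(S_g, Z)$ (or equivalently the Gysin/excision sequence relating $H_1(S_g \setminus Z)$, $H_1(S_g)$, and $\tilde H_0$ of the punctures), which $\iota$ respects since it preserves $Z$. The kernel of $H_1(S_g \setminus Z; \Z) \to H_1(S_g; \Z)$ is generated by small loops around the punctures (peripheral classes), subject to the single relation that their sum is null-homologous. On this kernel, the orientation-preserving map $\iota$ sends each small peripheral loop to $\pm$ a peripheral loop; I must check the sign is $-1$. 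This is where hypothesis (\ref{geometric hyperellipticity}) enters: if $\#Z = 1$, the unique peripheral loop $\delta$ satisfies $\delta = 0$ in $H_1(S_g \setminus Z)$ modulo the image... — more carefully, $\delta$ spans the kernel and $\delta$ bounds the complement of a disk, so $\iota(\delta) = \pm\delta$; orientation considerations at the fixed point force $\iota(\delta) = \delta$, but $\delta$ is also $2$-torsion-free and the relation forces consistency with $-\Id$ on the quotient, pinning down $\iota_* = -\Id$ on all of $H_1(S_g\setminus Z)$ by the five lemma applied to the map of long exact sequences. If $\#Z = 2$, then $\iota$ swaps the two punctures, so it swaps (up to sign) the two peripheral loops $\delta_1, \delta_2$; since $\delta_1 + \delta_2 = 0$ in $H_1(S_g \setminus Z)$, we get $\iota_*(\delta_1) = \pm\delta_2 = \mp\delta_1$, and again orientation-preservation forces the sign making $\iota_* = -\Id$. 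In both cases the five lemma, comparing $0 \to K \to H_1(S_g\setminus Z) \to H_1(S_g) \to 0$ with itself via $\iota_*$ (which is $-\Id$ on $K$ and on $H_1(S_g)$), yields $\iota_* = -\Id$ on $H_1(S_g \setminus Z; \Z)$.

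Transporting back through $i_*$, this says $\iota(\alpha) = -\alpha$ for all $\alpha \in H_1(\Gamma(\tau))$, which is axiom (\ref{restriction as -1}). Combined with axiom (\ref{commutativity}) established above, $\iota : H(\tau) \to H(\tau)$ is a triangle matching. I expect the main obstacle to be the careful sign bookkeeping in the punctured-surface homology step — specifically verifying that an orientation-preserving involution which is $-\Id$ on $H_1(S_g)$ is forced to also negate the peripheral classes, where the hypotheses on $\#Z$ and on how $\iota$ permutes $Z$ are exactly what make the five-lemma argument go through. Everything else is a direct unwinding of the definitions of the dual graph and the deformation retraction.
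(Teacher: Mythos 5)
Your overall strategy matches the paper's: induce a graph automorphism from the simplicial action, get commutation with the $\Z/3\Z$-action from orientation-preservation, pass through $i_* : H_1(\Gamma) \xrightarrow{\sim} H_1(S_g \setminus Z;\Z)$, and compare with $H_1(S_g;\Z)$ via the exact sequence, splitting on $\#Z$. The paper handles $\#Z = 1$ more cleanly by simply noting that $H_1(S_g \setminus Z;\Z) \cong H_1(S_g;\Z)$ when $Z$ is a single point, so the conclusion is immediate; your detour through a peripheral class $\delta$ (which is zero anyway) muddies that case but lands in the right place.

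The genuine gap is the final step in the $\#Z = 2$ case. You claim that ``the five lemma, comparing $0 \to K \to H_1(S_g\setminus Z) \to H_1(S_g) \to 0$ with itself via $\iota_*$ (which is $-\Id$ on $K$ and on $H_1(S_g)$), yields $\iota_* = -\Id$.'' The five lemma does not do this. It only tells you the middle map is an isomorphism; it never pins down \emph{which} isomorphism. Concretely, an endomorphism of $H_1(S_g \setminus Z;\Z)$ inducing $-\Id$ on both $K$ and the quotient $H_1(S_g;\Z)$ has, after choosing a splitting of the (non-equivariantly) split sequence, a block upper-triangular matrix $\left(\begin{smallmatrix} -1 & v \\ 0 & -\Id \end{smallmatrix}\right)$ with an a priori nonzero off-diagonal term $v : H_1(S_g;\Z) \to K \cong \Z$, and nothing in the five lemma kills $v$. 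What does kill it is that $\iota$ is an \emph{involution}: squaring the matrix gives $\left(\begin{smallmatrix} 1 & -2v \\ 0 & \Id \end{smallmatrix}\right)$, so $\iota_*^2 = \Id$ forces $2v = 0$, hence $v = 0$ since $H_1(S_g \setminus Z;\Z)$ is torsion-free. (Equivalently: $\iota_*$ can have no $+1$-eigenvector because neither $K$ nor the quotient does.) The paper is itself terse here --- it writes that ``the fact that $\iota$ is a hyperelliptic \emph{involution}'' together with $\iota(\beta_1) = -\beta_1$ implies the conclusion --- but the word ``involution'' is carrying the needed weight, whereas your appeal to the five lemma would not survive being written out. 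Replace that sentence with the order-two argument and the proof is sound.
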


\begin{proof}
The simplicial automorphism $\iota:\tau \to \tau$ induces a graph automorphism $\iota: \Gamma(\tau) \to \Gamma(\tau)$. Thus, there is a well-defined induced map $\iota: H \to H$ on the half-edges of $\Gamma$, and this map commutes with the $(\Z/3\Z)$-action on $H$ because $\iota: S_g \to S_g$ is orientation-preserving.

Suppose $\# Z = 1$. Then we have natural isomorphisms
\[ H_1(S_g; \Z) \cong H_1(S_g \setminus Z; \Z) \cong H_1(\Gamma; \Z). \]
By Definition~\ref{Hyperelliptic involution}, we have $\iota(\alpha) = -\alpha$ for every $\alpha \in H_1(\Gamma; \Z)$, and therefore $\iota: H \to H$ is a triangle matching.

Now suppose $Z = \{x_1, x_2\}$. Then there is a natural surjection
\[
H_1(S_g \setminus Z; \Z) \overset{\pi}{
\twoheadrightarrow} H_1(S_g; \Z),
\]
whose kernel is isomorphic to $\Z$. Let $\beta_j \in H_1(S_g \setminus Z; \Z)$ denote the homology class of a small arc $b_j \subset S_g \setminus Z$ encircling the point $z_j$ counterclockwise for $j = 1,2$.
Then $\beta_1 = -\beta_2$, because $b_1$ and $b_2$ co-bound a connected subsurface of $S_g \setminus Z$.
In addition, $\ker(\pi)$ is equal to the set of integer multiples of $\beta_1$, because $b_1$ bounds a disk in $S_g$ centered on $z_1$.
Since $\iota$ is assumed to nontrivially permute the elements of $Z$, we have $\iota(z_1) = z_2$ and also $\iota(\beta_1) = \beta_2$.
Since $H_1(S_g \setminus Z; \Z) \cong H_1(\Gamma; \Z)$, the fact that $\iota$ is a hyperelliptic involution along with the identity $\iota(\beta_1) = \beta_2 = -\beta_1$ implies that $\iota(\alpha) = -\alpha$ for every $\alpha \in H_1(\Gamma; \Z)$.
We conclude that $\iota: H \to H$ is a triangle matching.
\end{proof}

The following definition is equivalent to \cite[\S 2.1 Definition 2]{KontsevichZorich}.

\begin{defn}
A translation surface structure $(X, \omega)$ on $(S_g, Z)$ \emph{lies in a hyperelliptic component} if there exists a hyperelliptic involution $\iota: S_g \to S_g$ so that the following conditions hold.
\begin{enumerate}
    \item The map $\iota: X \to X$ is a biholomorphism satisfying $\iota^*\omega = -\omega$.
    \item Either $\# Z= 1$, or $\# Z = 2$ and $\iota$ nontrivially permutes the elements of $Z$.
\end{enumerate}
\end{defn}

\begin{lem}\label{lem:hyperelliptic_delaunay}
Let $(X, \omega)$ be a translation surface structure on $(S_g, Z)$ with nondegenerate Delaunay triangulation $\tau$. If $(X, \omega)$ lies in a hyperelliptic component with hyperelliptic involution $\iota: S_g \to S_g$, then $\tau$ is compatible with $\iota$.
\end{lem}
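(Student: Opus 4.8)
The plan is to verify directly the two conditions of Definition~\ref{hyperelliptic defn}. Condition~\ref{geometric hyperellipticity} is immediate, as it is literally part of the hypothesis that $(X,\omega)$ lies in a hyperelliptic component, so all the work goes into Condition~\ref{simplicial automorphism}: that the given hyperelliptic involution $\iota$ acts simplicially on the (nondegenerate) Delaunay triangulation $\tau$.

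The first step is to recognize $\iota$ as an isometry of the flat structure. Since $\iota\colon X\to X$ is a biholomorphism with $\iota^{*}\omega=-\omega$, in any translation chart $\varphi\colon U\to\C$ (one with $\varphi^{*}dz=\omega$) the map $\iota$ has the local form $z\mapsto -z+c$ for a locally constant $c\in\C$. As $z\mapsto-z$ is a Euclidean isometry of $\C$, it follows that $\iota$ is an isometry of the Euclidean metric on $S_{g}\setminus Z$; it therefore extends to an isometry of the metric completion, carrying cone points to cone points of equal cone angle. Combined with the defining property of a hyperelliptic component, $\iota$ preserves the set $Z$ setwise (fixing the unique point when $\#Z=1$, and swapping the two points when $\#Z=2$).

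The second step is to show $\iota(\tau)=\tau$ as geometric triangulations. An isometry that preserves $Z$ carries immersed empty disks to immersed empty disks: if $d\colon\overline{D}\to X$ is one, then so is $\iota\circ d$, and $(\iota\circ d)^{-1}(Z)=d^{-1}(\iota^{-1}(Z))=d^{-1}(Z)$. Applying $\iota$ to the vertices, edges, and faces of $\tau$ thus yields a geometric triangulation $\iota(\tau)$ each of whose faces is again cut out by an immersed empty disk with exactly three preimages of $Z$; that is, $\iota(\tau)$ is a nondegenerate Delaunay triangulation of $(X,\omega)$. By Remark~\ref{rem:uniqueness} such a triangulation is unique, so $\iota(\tau)=\tau$; in other words $\iota$ permutes $V(\tau)$, $E(\tau)$, and $F(\tau)$.

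The last step is bookkeeping: since $\iota$ is a homeomorphism of $S_{g}$ that maps the cell structure of $\tau$ to itself, it is a simplicial automorphism $\tau\to\tau$ (on each closed face it is in fact the affine map $z\mapsto -z+c$ onto another face, matching vertices to vertices). This gives Condition~\ref{simplicial automorphism} and finishes the proof. I do not anticipate a real obstacle: the only points that deserve care are the extension of $\iota$ across $Z$ as a metric isometry and the verification that it fixes $Z$ setwise, both of which follow from $\iota^{*}\omega=-\omega$ together with the definition of a hyperelliptic component, and the appeal to uniqueness of nondegenerate Delaunay triangulations, which is exactly Remark~\ref{rem:uniqueness}.
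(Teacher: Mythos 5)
Your proposal is correct and takes essentially the same approach as the paper: both arguments note that $\iota^{*}\omega = -\omega$ makes $\iota$ an isometry of the flat metric and then invoke the uniqueness of nondegenerate Delaunay triangulations (Remark~\ref{rem:uniqueness}) to conclude $\iota(\tau)=\tau$. You supply a bit more detail (the local form $z\mapsto -z+c$, the preservation of immersed empty disks, and the explicit check of Condition~\ref{geometric hyperellipticity} of Definition~\ref{hyperelliptic defn}), but the core argument is identical.
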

\begin{proof}
    Since $\iota^* \omega = -\omega$, the involution $\iota$ acts as an isometry for the flat metric on $(X, \omega)$.
    By Remark~\ref{rem:uniqueness}, the triangulation $\tau$ is determined by the metric structure of $(X, \omega)$, and hence is isometry-invariant.
    Thus $\iota$ acts on $\tau$ as a simplicial automorphism.
\end{proof}

The above lemma immediately implies Corollary~\ref{cor:hyperelliptic}.

\subsection{Connected sums of ribbon graphs}\label{subsec:connected_sum}
We now describe a combinatorial operation on ribbon graphs called  \emph{connected sum} in order to produce new triangle matchings from old ones.
Our proof in Lemma~\ref{lem:connected_sum_triangle_matching} uses a standard Mayer--Vietoris argument; Figure~\ref{fig:schematic} illustrates the idea of the proof.
\begin{figure}
    \centering
    \includegraphics[width=0.45\linewidth]{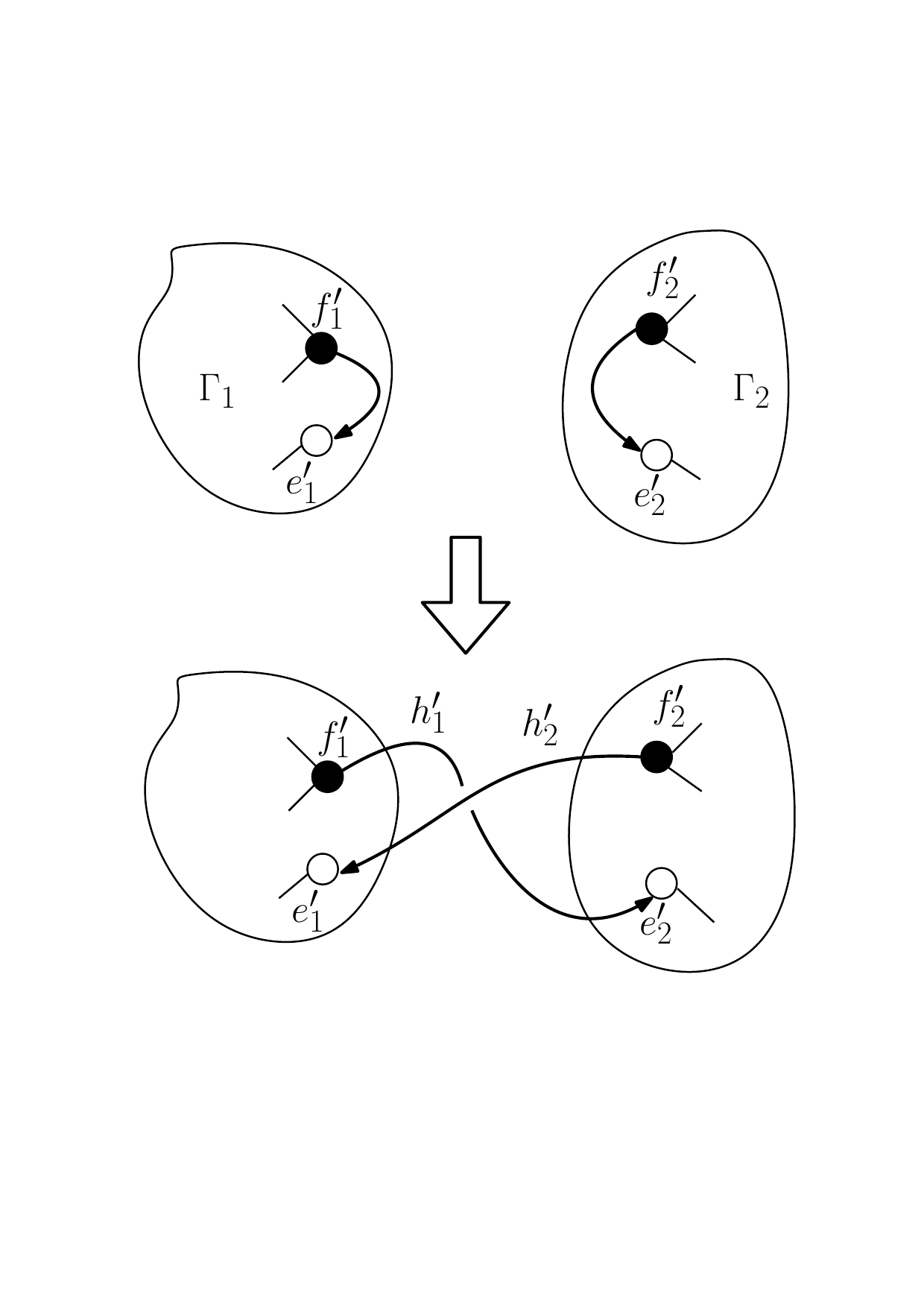}
    \caption{A schematic of the sum of two ribbon graphs $\Gamma_1$ and $\Gamma_2$ along the half-edges $h'_i \in C_1(\Gamma_i)$.}
    \label{fig:schematic}
\end{figure}

\begin{defn}\label{defn:connected_sum}
For $j = 1,2$, let $\Gamma_j$ be trivalent ribbon graphs with endpoint maps $\e_j: H_j \to E_j$ and $\phi_j: H_j \to F_j$. Given half-edges $h'_j \in H_j$, we define the \emph{sum} $\Gamma_1 \#_{h'_1, h'_2} \Gamma_2$ as the ribbon graph with the following attaching maps.
\begin{itemize}
    \item The map $\phi_1 \# \phi_2: H_1 \cup H_2 \to F_1 \cup F_2$ given by $\phi_1 \# \phi_2(h_j) = \phi_j(h_j)$ for $h_j \in H_j$, and
    \item The map $\e_1 \# \e_2: H_1 \cup H_2 \to E_1 \cup E_2$ given by
    \begin{itemize}
    \item[$\circ$] $\e_1 \# \e_2(h_j) \coloneqq \e_j(h_j)\ \ \text{for }h_j \ne h'_j \in H_j$,
    \item[$\circ$] $\e_1 \# \e_2(h'_1) \coloneqq \e_2(h'_2)$, and
    \item[$\circ$] $\e_1 \# \e_2(h_2') \coloneqq \e_1(h_1')$.
\end{itemize}
\end{itemize}
We write $\Gamma_1 \# \Gamma_2$ if the half-edges $\{h'_1, h'_2\}$ are clear from the context.
If $\iota_j: H_j \to H_j$ are triangle matchings, then we define $\iota_1 \# \iota_2:H_1 \cup H_2 \to H_1 \cup H_2$ by $\iota_1 \# \iota_2(h_j) = \iota_j(h_j)$ for $h_j \in H_j$.
\end{defn}

\begin{ex}[Two squares]\label{ex:two_squares}
If two geometrically triangulated translation surfaces are depicted as polygons in $\R^2$ with opposite sides identified, then the connected sum of their ribbon graphs corresponds to the new translation surface formed by placing those two polygons side by side as in Figure~\ref{fig:summing_two_squares}.
\end{ex}

\begin{figure}
    \centering
    \includegraphics[width=0.5\linewidth]{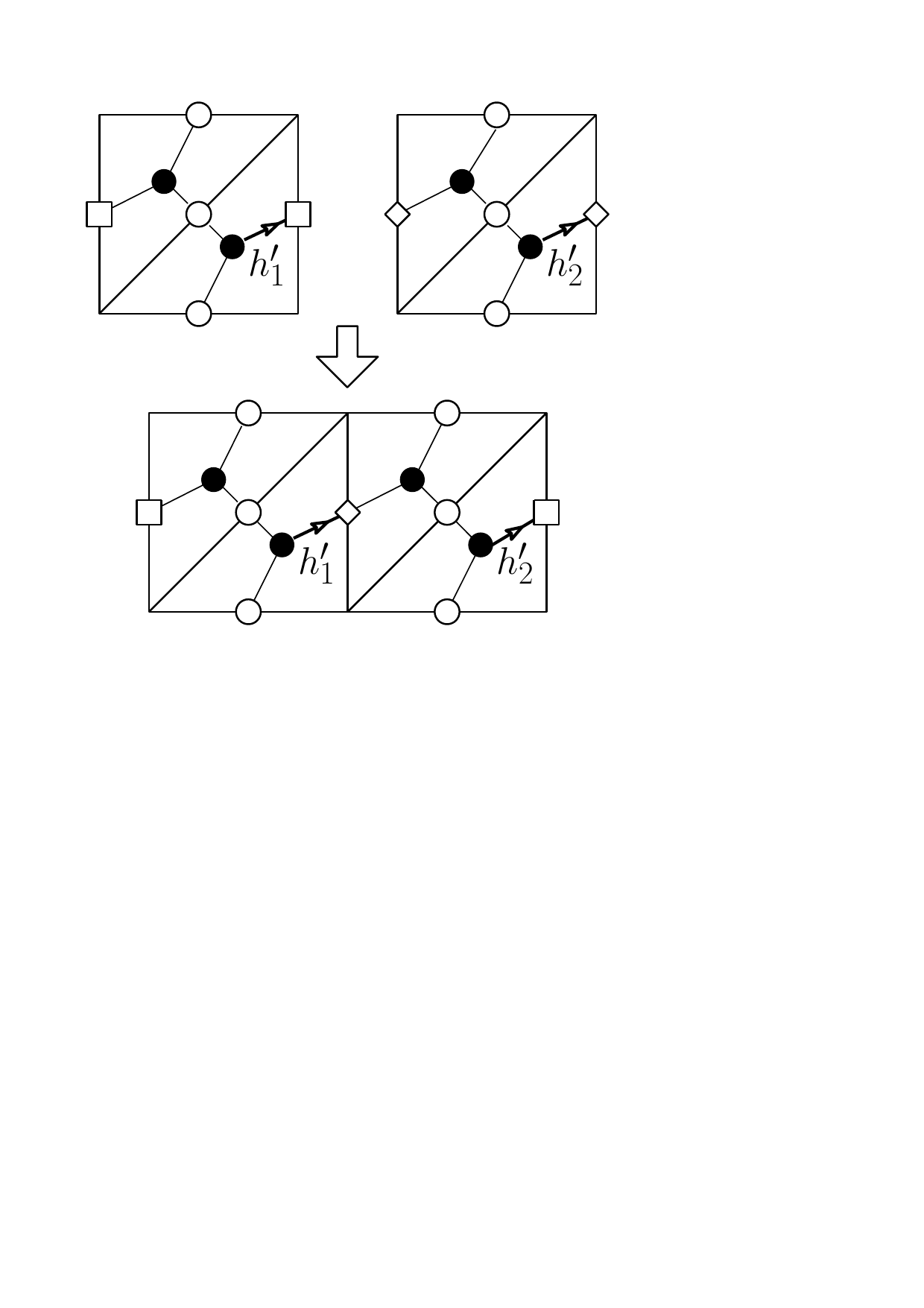}
    \caption{The sum of two square tori yields a torus with two marked points. Note that the endpoints of $h_1'$ and $h_2'$ are depicted by different shapes.}
    \label{fig:summing_two_squares}
\end{figure}

\begin{lem}\label{lem:connected_sum_triangle_matching}
Let $\Gamma_j$ be trivalent ribbon graphs with triangle matchings $\iota_j: H_j \to H_j$, and let $h_j' \in H_j$ be nonseparating edges of the graphs $\Gamma_j$. Then for $\Gamma_1 \#_{h_1', h_2'} \Gamma_2$, the map $\iota_1 \# \iota_2$ is a triangle matching.
\end{lem}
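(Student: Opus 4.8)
The plan is to verify the two defining properties of a triangle matching (Definition~\ref{defn:triangle_matching}) for $\iota \coloneqq \iota_1 \# \iota_2$ on $\Gamma \coloneqq \Gamma_1 \#_{h_1', h_2'} \Gamma_2$. Property~(\ref{commutativity}) is immediate: forming the connected sum alters only the edge-endpoint maps $\e_j$, leaving the face-endpoint maps $\phi_j$ and the cyclic $(\Z/3\Z)$-orderings of the fibers $H_f$ unchanged, so the $(\Z/3\Z)$-action on $H = H_1 \sqcup H_2$ is the disjoint union of those on $H_1$ and $H_2$; as each $\iota_j$ commutes with the action on $H_j$, so does $\iota$. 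The content is Property~(\ref{restriction as -1}), that $\iota(\alpha) = -\alpha$ for every $\alpha \in H_1(\Gamma) \leq C_1(\Gamma)$. The key point is that, as abelian groups, $C_1(\Gamma) = \Z\la H_1 \sqcup H_2\ra = C_1(\Gamma_1) \oplus C_1(\Gamma_2)$, and on this group $\iota$ acts simply as $\iota_1 \oplus \iota_2$; only the boundary map has changed.

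First I would determine $H_1(\Gamma) = \ker \dee_\Gamma$. Writing $h_j' = (f_j', e_j')$, a direct computation from Definition~\ref{defn:connected_sum} shows that for $\beta_j \in C_1(\Gamma_j)$ with $c_j$ the coefficient of $h_j'$ in $\beta_j$,
\[ \dee_\Gamma(\beta_1 + \beta_2) = \dee_{\Gamma_1}(\beta_1) + \dee_{\Gamma_2}(\beta_2) + (c_1 - c_2)\,(e_2' - e_1'). \]
Applying the augmentation map to the $(E_1 \cup F_1)$-component of this expression forces $c_1 = c_2$ and then $\beta_j \in Z_1(\Gamma_j) = H_1(\Gamma_j)$; hence $\alpha = \beta_1 + \beta_2$ is a cycle of $\Gamma$ if and only if each $\beta_j$ is a cycle of $\Gamma_j$ and $c_1 = c_2$. (This is the substance of the Mayer--Vietoris argument indicated by Figure~\ref{fig:schematic}: decompose $\Gamma$ as a union of regular neighborhoods of its two halves, meeting along a pair of arcs, one across each re-routed edge $h_1', h_2'$.) Because $h_j'$ is nonseparating, $\Gamma_j \setminus h_j'$ is connected, so $\Gamma_j$ contains a cycle $\sigma_j$ in whose $1$-chain $h_j'$ occurs with coefficient $1$ (replace $\sigma_j$ by $-\sigma_j$ if needed), and $H_1(\Gamma_j) = H_1(\Gamma_j \setminus h_j') \oplus \Z\sigma_j$. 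The displayed formula shows $\mu \coloneqq \sigma_1 + \sigma_2$ is a cycle of $\Gamma$ (the correction term vanishes since $h_1', h_2'$ each occur once, and each $\sigma_j$ is $\dee_{\Gamma_j}$-closed). Writing a general cycle $\alpha = \beta_1 + \beta_2$ of $\Gamma$ as above with $c \coloneqq c_1 = c_2$ and subtracting $c\mu = c\sigma_1 + c\sigma_2$, one obtains $\alpha - c\mu \in H_1(\Gamma_1\setminus h_1') \oplus H_1(\Gamma_2 \setminus h_2')$; since these subgroups and $\Z\mu$ have disjoint supports in $C_1(\Gamma)$, this yields the direct-sum decomposition
\[ H_1(\Gamma) = H_1(\Gamma_1 \setminus h_1') \,\oplus\, H_1(\Gamma_2 \setminus h_2') \,\oplus\, \Z\mu. \]

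It remains to check that $\iota = \iota_1 \oplus \iota_2$ negates each summand. A class in $H_1(\Gamma_j \setminus h_j')$ is a cycle supported on $H_j \setminus \{h_j'\}$, hence lies in $H_1(\Gamma_j) \leq C_1(\Gamma_j)$, on which $\iota$ restricts to $\iota_j$; it is therefore negated, since $\iota_j$ is a triangle matching on $\Gamma_j$. Likewise $\iota(\mu) = \iota_1(\sigma_1) + \iota_2(\sigma_2) = -\sigma_1 - \sigma_2 = -\mu$, using $\sigma_j \in H_1(\Gamma_j)$. Hence $\iota(\alpha) = -\alpha$ for all $\alpha \in H_1(\Gamma)$, completing the verification.

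I expect the main obstacle to be the second step: correctly identifying $H_1(\Gamma_1 \# \Gamma_2)$, and in particular recognizing that the single ``new'' homology class is precisely $\sigma_1 + \sigma_2$, assembled from cycles through the glued half-edges. The care required is bookkeeping — the half-edge $h_j'$ is the same basis vector of $C_1$ before and after the sum but acquires a different boundary, so one must track how re-routing $h_1', h_2'$ changes which chains are cycles — and the nonseparating hypothesis enters exactly here, guaranteeing the existence of the cycles $\sigma_j$ (equivalently, that $\Gamma_j \setminus h_j'$ stays connected). Once the decomposition is in hand, the negation check is routine.
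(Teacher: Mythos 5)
Your proof is correct, and it reaches the same key decomposition
\[
H_1(\Gamma_1 \# \Gamma_2) \;\cong\; H_1(\Gamma_1 \setminus h_1') \oplus H_1(\Gamma_2 \setminus h_2') \oplus \Z\langle \sigma_1 + \sigma_2 \rangle
\]
as the paper, but by a genuinely different route: the paper covers $\Gamma_1\#\Gamma_2$ by two open sets (neighborhoods of $\Gamma_1 \cup h_2'$ and $\Gamma_2 \cup h_1'$) and runs the Mayer--Vietoris long exact sequence, extracting the new generator $\alpha_1' + \alpha_2'$ as a lift of a class under the connecting map $\partial_*$; you instead compute directly at the chain level, observing that $C_1(\Gamma_1\#\Gamma_2) = C_1(\Gamma_1)\oplus C_1(\Gamma_2)$ with only $\dee$ changed, and isolating the correction term $(c_1-c_2)(e_2'-e_1')$ via an augmentation argument. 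Your approach is more elementary (it never leaves $\Z\langle H\rangle$ and avoids the machinery of the Mayer--Vietoris sequence and connecting homomorphism) and, I would say, makes more transparent exactly how the nonseparating hypothesis enters. The one imprecision: you assert that the three summands in your decomposition ``have disjoint supports,'' but $\Z\mu$ does not have disjoint support from either $H_1(\Gamma_j \setminus h_j')$, since $\sigma_j$ typically involves half-edges of $H_j\setminus\{h_j'\}$. The direct sum still holds, but the correct justification is the one you implicitly used a sentence earlier: the $h_j'$-coefficient of $\mu$ is $1$ while it vanishes on the other two summands (forcing the $\Z\mu$-coordinate to be zero), and then the remaining two summands genuinely have disjoint supports. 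The final step --- checking that $\iota_1\#\iota_2$ acts as $\iota_1\oplus\iota_2$ on $C_1$ and negates each summand --- matches the paper exactly.
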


\begin{proof}
For each $h \in H_1 \cup H_2$, let $U_h$ be a small open neighborhood of $h$ in the graph $\Gamma_1 \# \Gamma_2$, and let
\begin{align*}
    U_1 &\coloneqq U_{h'_2} \cup \bigcup_{h \in H_1} U_h,\\
    U_2 &\coloneqq U_{h'_1} \cup \bigcup_{h \in H_2} U_h.
\end{align*}
Retracting the neighborhoods $U_j$ along the contractible sets $U_{h_{j}'}$ and $U_{h_{j+1}'}$ gives homotopy equivalences
\[ U_j \simeq \Gamma_j - h_j'. \]
Moreover, since $U_{h_j'}$ retracts onto $h_j'$, we have a homotopy equivalence
\[ U_1 \cap U_2 \simeq h_1' \cup h_2'. \]
By the Mayer--Vietoris Theorem, the diagram
\begin{center}
\begin{tikzcd}
& U_1 \arrow[rd, "k"] & \\
U_1 \cap U_2 \arrow[ru, "i"] \arrow[rd, "j"] & & \Gamma_1 \# \Gamma_2 \\
& U_2 \arrow[ru, "\ell"] &
\end{tikzcd}
\end{center}
gives a long exact sequence:
\begin{center}
\begin{tikzcd}           
 &
0 \arrow{r} &
H_1(U_1) \oplus H_1(U_2) \arrow{r}{k_* - \ell_*} \arrow{r}{k_* - \ell_*} \arrow[phantom, ""{coordinate, name=Z}]{d} &
H_1(\Gamma_1 \# \Gamma_2)
  \arrow[
    rounded corners,
    to path={
      -- ([xshift=2ex]\tikztostart.east)
      |- (Z) [near end]\tikztonodes
      -| ([xshift=-2ex]\tikztotarget.west)
      -- (\tikztotarget)
    }
  ]{dll}[above, pos=1]{\dee_*} \\
&
H_0(U_1 \cap U_2) \arrow{r}{i_* \oplus j_*} &
H_0(U_1) \oplus H_0(U_2) \arrow{r}{k_* - \ell_*} &
H_0(\Gamma_1 \# \Gamma_2) \arrow{r} & 0.
\end{tikzcd}
\end{center}

Because $h_j'$ is assumed to be nonseparating, there exists a simple cycle $\alpha_j' \in H_1(\Gamma_j)$ of which $h_j'$ is a nontrivial summand.
Regarding $\alpha_j'$ as an element of $C_1(\Gamma_j) = \Z\la H_j \ra$, then because
\[ C_1(\Gamma_1 \# \Gamma_2) = \Z\la H_1 \cup H_2 \ra = \Z\la H_1 \ra \oplus \Z\la H_2 \ra, \]
we may regard $\alpha_1' + \alpha_2'$ as a cycle in $H_1(\Gamma_1 \# \Gamma_2)$.

\begin{claim}
    There is a decomposition
    \[ H_1(\Gamma_1 \# \Gamma_2) \cong \Z\la \alpha_1' + \alpha_2'\ra \oplus H_1(\Gamma_1 - h_1') \oplus H_1(\Gamma_2 - h_2'). \]
\end{claim}
\begin{proof}[Proof of Claim]\let\qed\relax
First, consider $\im(\partial_*)$, which by exactness is $\ker(i_* \oplus j_*)$.
The map $i_* \oplus j_*$, viewed as a map from $\Z^2$ to $\Z^2$, has matrix $\begin{psmallmatrix}
    1 & 1 \\
    1 & 1
\end{psmallmatrix}$,
and its kernel is generated by $f'_1 - f'_2$ where $h'_i = (f'_i, e'_i)$.
We will construct a class in $H_1(\Gamma_1 \# \Gamma_2)$ that maps onto $f'_1 - f'_2$ under $\dee_*$.

Using the definition of the connecting homomorphism, we compute that
\[ \dee_*(\alpha'_1 + \alpha'_2) = \dee \alpha'_1 = f'_1 - e'_1 = f'_1 - f'_2. \]
Therefore $\alpha'_1 + \alpha'_2$ is our desired class mapping onto the image of $\dee_*$.

As a result of the above, we have established a splitting
\[ H_1(\Gamma_1 \# \Gamma_2) = \Z \la \alpha'_1 + \alpha'_2 \ra \oplus \ker(\dee_*). \]
Exactness at $H_1(U_1) \oplus H_1(U_2)$ in the Mayer--Vietoris sequence implies that $k_* - \ell_*$ is injective, so that
\[ \ker(\partial_*) \cong H_1(U_1) \oplus H_1(U_2) \cong H_1(\Gamma_1 - h_1') \oplus H_1(\Gamma_2 - h_2'). \]
This proves the claim.
\end{proof}

We now use the above decomposition to construct the composite triangle matching $\iota_1 \# \iota_2$.
On the one hand, $\iota_1 \# \iota_2$ restricts to $\iota_j$ on the subgroup $H_1(\Gamma_j - h_j')$ of $H_1(\Gamma_1 \# \Gamma_2)$, and $\iota_j$ acts as $-1$ on $H_1(\Gamma_j - h_j')$.
On the other hand, we have $\iota_j(\alpha_j') = -\alpha_j'$ (because $\alpha_j' \in H_1(\Gamma_i)$), so that $\iota_1 \# \iota_2(\alpha_1' + \alpha_2') = -\alpha_1' - \alpha_2'$.
We conclude that $\iota_1 \# \iota_2$ acts as $-1$ on all of $H_1(\Gamma_1 \# \Gamma_2)$.
\end{proof}

Finally, we use the following lemma in order to apply Lemma~\ref{lem:connected_sum_triangle_matching} to geometric triangulations of translation surfaces.

\begin{lem}\label{lem:nonseparating}
If $\Gamma$ is the dual graph to a geometric triangulation $\tau$ of a translation surface $(X, \omega)$, then every half-edge $h \in H$ is nonseparating.
\end{lem}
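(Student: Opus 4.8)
The plan is to show that a separating half-edge in $\Gamma = \Gamma(\tau)$ would force the existence of a ``bigon-like'' obstruction incompatible with the triangulation $\tau$ being geometric, i.e.\ with its faces being honest Euclidean triangles on $(X,\omega)$. Recall from Remark~\ref{rem:deformation_retraction} that $\Gamma$ embeds in $S_g \setminus Z$ as a deformation retract, and a half-edge $h = (f,e)$ of $\Gamma$ corresponds dually to the edge $e \in E(\tau)$ together with one of the two triangles $f$ incident to it. Deleting $h$ from $\Gamma$ corresponds, on the level of the triangulation, to merging the two triangles across the edge $e$ into a single quadrilateral (or, if both sides of $e$ are the same triangle, to cutting that triangle along $e$). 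So ``$h$ is nonseparating'' is equivalent to the statement that this edge-deletion on $\Gamma$ does not disconnect $\Gamma$, equivalently (since $\Gamma \simeq S_g \setminus Z$) that removing the dual edge $e$ from the $1$-skeleton does not disconnect $S_g \setminus Z$.

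First I would set up the dual dictionary precisely: a half-edge $h = (f,e)$ of $\Gamma$ is separating if and only if, writing $f, f'$ for the (not necessarily distinct) faces of $\tau$ on the two sides of $e$, the subgraph $\Gamma - h$ has more connected components than $\Gamma$. Since $\Gamma$ is connected (as $S_g \setminus Z$ is), this means $\Gamma - h$ is disconnected, i.e.\ $h$ lies on no cycle of $\Gamma$, i.e.\ $h$ is a bridge. Equivalently, the class $(f,e) \in C_1(\Gamma)$ does not appear with nonzero coefficient in any simple cycle, hence not in any element of $H_1(\Gamma)$. Next I would argue geometrically: consider the edge $e$ of $\tau$ as a saddle connection of $(X,\omega)$ realized by a straight segment. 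The two faces $f, f'$ adjacent to $e$ are Euclidean triangles. The key observation is that any edge $e$ of a geometric triangulation is crossed by a closed loop that stays in $S_g \setminus Z$: namely, take a short transversal arc crossing $e$ once, push it into the interiors of $f$ and $f'$, and close it up using the fact that both $f$ and $f'$ (being triangles with their three vertices in $Z$ but open interiors disjoint from $Z$) are contractible relative to nothing — so a transversal segment through $e$ extends to an arc inside $f \cup e \cup f'$ whose endpoints can be joined to form a loop $\gamma$ meeting the $1$-skeleton of $\tau$ only in the single point on $e$. Dually, this loop $\gamma$ is a cycle in $\Gamma$ through the half-edge $h$, contradicting that $h$ is a bridge.

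The step I expect to be the main obstacle is making precise the claim that the transversal loop through $e$ can be closed up \emph{within the union of the two adjacent triangles} without meeting $Z$ or the rest of the $1$-skeleton — equivalently, pinning down exactly why deleting one half-edge of $\Gamma$ never disconnects it. When $f \ne f'$, this is clear: $f \cup e \cup f'$ is a closed quadrilateral disk, its interior avoids $Z$, and a properly embedded arc crossing $e$ once has its two endpoints on the boundary of this disk, which can then be connected through the rest of $S_g \setminus Z$ using connectedness of $\Gamma - h$'s ambient... but that is circular. The clean fix is the purely combinatorial one: for any connected graph, an edge is a bridge if and only if removing it disconnects the graph, and for a \emph{trivalent} ribbon graph dual to a triangulation of a \emph{closed} surface, deleting a single half-edge $(f,e)$ corresponds to deleting the dual edge $e$, which cannot disconnect the closed surface $S_g$ (removing a single open arc from a closed surface leaves it connected, since $S_g$ has no separating arcs between points — an arc has a neighborhood that is a disk, whose complement in the disk is connected). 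Carefully: if deleting $e$ from the $1$-skeleton disconnected $S_g \setminus Z$, then $e$ together with its endpoints would separate $S_g$, but $e$ is an arc (a $1$-cell with endpoints in $Z$) and a single arc never separates a closed surface $S_g$ for $g \ge 1$ — its regular neighborhood is a disk, and a disk does not separate. This gives a cycle in $\Gamma$ through $h$, so $h$ is nonseparating. I would then just spell out this regular-neighborhood argument and the dual-edge-deletion dictionary, which together complete the proof.
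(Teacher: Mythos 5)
Your proposal has a genuine gap at the decisive step. The argument you call the ``clean fix'' rests on the claim that ``a single arc never separates a closed surface $S_g$ for $g \ge 1$,'' justified by the observation that an arc has a regular neighborhood that is a disk. This is false. Edges of a triangulation of $(S_g,Z)$ have their endpoints in $Z$, and when $\#Z=1$ (as in the square L running example, where $Z=\{p\}$), every edge is a loop based at $p$. Separating loops based at $p$ certainly exist for $g\ge 1$: take a separating simple closed curve on $S_g$ and isotope it so that it passes through $p$. The regular neighborhood of such a loop is an annulus, not a disk, and removing the loop does disconnect $S_g$. Moreover, even in the disk picture you describe, removing an arc that runs from boundary to boundary of the disk cuts the disk into two half-disks, so ``the complement in the disk is connected'' is not correct. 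In short, you have tried to prove a purely topological statement about $\Gamma$, but the statement is not purely topological: it is exactly the {\it geometric} hypothesis that $\tau$ is a geometric triangulation of a translation surface that rules out separating edges.

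The paper's proof uses the geometry in the following short way, which you should compare against: a separating arc $e$ (i.e.\ one bounding a subsurface) is nullhomologous in $H_1(S_g,Z;\Z)$, so its period $\int_e\omega$ is zero. But in a geometric triangulation every edge is a saddle connection, which is a nondegenerate straight segment and hence has nonzero period. This contradiction shows no edge can separate. Your earlier instinct --- push a transversal arc through $e$ and close it up --- was heading toward this picture but, as you noted yourself, becomes circular without a substitute argument; the substitute must involve $\omega$, not just the topology of $S_g$.
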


\begin{proof}
First observe that the half-edge $h  = (f, e)$ is separating if and only if $\Gamma \setminus \{e\}$ is disconnected, because $h$ retracts onto its endpoint $e$. The preimage of the vertex $e$ of $\Gamma$ under the deformation retraction $S_g \setminus Z \twoheadrightarrow \Gamma$ is the interior of the edge $e$ of the triangulation $\tau$. Therefore, $\Gamma \setminus \{e\}$ is disconnected if and only if $(S_g \setminus Z) \setminus e$ is disconnected, which means that the topological arc $e$ bounds a subsurface of $S_g$. Since the triangulation $\tau$ is geometric, the edge $e$ is a saddle connection on $(X, \omega)$, and hence its period $\int_e \omega$ is nonzero. An arc that bounds a subsurface is nulhomologous, and hence must have trivial period $\int_e \omega$. We conclude that if $e$ is an edge of a geometric triangulation $\tau$, then $h = (f, e)$ cannot be separating.
\end{proof}

\subsection{Arboreal origamis}\label{subsec:arboreal}
In this subsection, we give combinatorial criteria that determine whether the \emph{standard triangulation} $\tau(h, v)$ of an \emph{origami} $X(h, v)$ admits a triangle matching (Theorem~\ref{thm:arboreal_origamis}).
Our proof uses Lemmas~\ref{lem:connected_sum_triangle_matching}~and~\ref{lem:nonseparating} to establish that $\tau(h, v)$ admits a triangle matching if and only if the core curves of $X(h, v)$ form an \emph{arboreal network} as in \cite[Definition 4.6]{CalderonSalter}.
Hamenst\"adt \cite[Definition 1.2]{Hamenstadt} independently employed arboreal networks under the name \emph{admissible curve systems}.
We begin by recalling some basic definitions.

\begin{defn}\label{defn:origami}
For $s \in \N$, let $\Sym(s)$ denote the symmetric group on $s$ symbols.
Given a pair of permutations $(h, v) \in \Sym(s)^2$ such that $h$ and $v$ act transitively on $[s] \coloneqq \{1,\dots,s\}$, we take the set $[0,1]^2 \times [s]$ and impose the relations
\[ (1,y, j) \sim (0,y, h(j))\text{ and }(x, 1, j) \sim (x, 0, v(j)). \]
The quotient $X \coloneqq ([0,1]^2 \times [s])/\sim$ inherits an abelian differential $\omega$ from the differential $dz$ on $[0,1]^2$. The image of every point of the form $(0, 0, j) \in [0,1]^2 \times [s]$ at which $\omega$ does not vanish is regarded as a \emph{marked point} (cf. Definition~\ref{defn:translation_surface}) of $\omega$.

We denote by $X(h, v) \coloneqq (X, \omega)$ the translation surface structure determined by this construction, and we say that $X(h, v)$ is an \emph{origami}.
Because $h$ and $v$ act transitively on $[s]$, the resulting surface is connected.
For each $j \in [s]$, we say that the image of $[0,1] \times \{j\}$ in $X(h, v)$ under the quotient mapping is a \emph{square} of $X(h, v)$.
\end{defn}

\begin{defn}
We say that a simple closed curve $\beta \subset X(h, v)$ is a \emph{horizontal (resp. vertical) cylinder core curve} if every point $z \in \beta$ is of the form $(x, \tfrac12, j)$ (resp. $(\tfrac12, y, j)$).
Let $\mathcal N(h,v)$ denote the collection of all cylinder core curves on $X(h, v)$, called the \emph{network} for $X(h, v)$.
\end{defn}

\begin{defn}\label{defn:standard_triangulation}
Let $X(h, v)$ be an origami. The \emph{standard triangulation} $\tau(h, v)$ of $X(h, v)$ is the geometric triangulation whose faces are the isosceles right triangles with hypotenuses equal to the positively sloped diagonals of the squares of $X(h, v)$.
\end{defn}
See Figure~\ref{fig:square_l_combinatorial} for the example of the square L.

\begin{defn}\label{defn:geometrically_simple}
Let $X(h, v)$ be an origami.
If the geometric intersection number $i(\eta, \eta')$ is at most 1 for each pair of core curves $\eta, \eta' \in \mathcal N(h, v)$, we say that $X(h,v)$ is \emph{geometrically simple}.
\end{defn}

\begin{defn}\label{defn:arboreal}
If $X(h,v)$ is geometrically simple, let $\Lambda_{\mathcal N(h, v)}$ denote the simple graph with vertices $\mathcal N(h, v)$, where we add an edge between $\eta, \eta' \in \mathcal N(h, v)$ if and only if $\eta$ and $\eta'$ intersect.
We say that the surface $X(h, v)$ and its network $\mathcal N(h, v)$ are \emph{arboreal} if $\Lambda_{\mathcal N(h, v)}$ is a tree.
\end{defn}

The square L admits an arboreal network.
We give another example in Example~\ref{ex:escher}.

\begin{lem}\label{lem:same_cyls}
    If $X(h, v)$ is a geometrically simple orgami whose standard triangulation admits a triangle matching $\iota: H \to H$, then for all $h \in H$ the half-edges $h$ and $\iota(h)$ must reside in the same square.
\end{lem}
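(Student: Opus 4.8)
The plan is to exploit the rigidity of triangle matchings on periods established in Lemma~\ref{lem:Sam's_favorite_formula}, combined with the concrete geometry of the standard triangulation of an origami. Recall that each face of $\tau(h,v)$ is an isosceles right triangle sitting inside one of the unit squares of $X(h,v)$, with its hypotenuse along the positively-sloped diagonal of that square. Thus every half-edge $h = (f,e)$ carries a well-defined \emph{square} of $X(h,v)$, namely the square containing the triangle $f$; call it $\mathrm{sq}(h) \in [s]$. The statement to prove is that $\mathrm{sq}(\iota(h)) = \mathrm{sq}(h)$ for all $h \in H$.

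First I would fix a half-edge $h = (f,e) \in H$ and set $(f', e') = \iota(f,e)$. By Lemma~\ref{lem:Sam's_favorite_formula}, we have $r(f,e) = -r(\iota(f,e)) = -r(f',e')$ in $H_1(S_g, Z; \Z)$, and since the angle map (Definition~\ref{defn:angle_map}) computes the period of the edge $e$ via $\Phi_{r(f,e)}$, the saddle connections $e$ and $e'$ have opposite periods: $\int_{e} \omega = -\int_{e'} \omega$ as complex numbers (using the orientations induced by $f$ and $f'$ respectively). In particular $e$ and $e'$ have the same length, and since $\iota_F$ matches congruent triangles (as noted in the Remark following Definition~\ref{defn:triangle_matching}), the triangles $f$ and $f'$ are congruent isosceles right triangles. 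The edges of such a triangle inside a unit square are: the hypotenuse (the positively-sloped diagonal, of length $\sqrt 2$) and the two legs (the horizontal and vertical sides of the square, of length $1$). So $e$ is a diagonal edge iff $e'$ is, and $e$ is a horizontal/vertical edge iff $e'$ is of the corresponding type. The key remaining point is that the period of a diagonal edge determines its square, and likewise for the horizontal and vertical leg edges, up to the $\Z^2$-lattice — but I must rule out that $\iota$ sends a diagonal of one square to the diagonal of a \emph{different} square with the negated period.

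Here is where the network structure enters, and this is the step I expect to be the main obstacle. The periods of the diagonal edges of the standard triangulation are all equal to $1+i$ (up to sign and choice of orientation), so they do \emph{not} individually distinguish squares; I cannot conclude $\mathrm{sq}(h) = \mathrm{sq}(\iota(h))$ from period data alone. Instead I would argue using the combinatorial adjacency encoded by $\Gamma(\tau)$: a triangle matching $\iota$ commutes with the $\Z/3\Z$-action and is an automorphism of $C_1(\Gamma)$ restricting to $-1$ on $H_1(\Gamma)$, so it interacts with the structure of $\Gamma(\tau)$ in a controlled way. The two triangles within a single square of $X(h,v)$ are glued along their common hypotenuse, forming a distinguished pair; I would show that $\iota$ must preserve this hypotenuse-gluing incidence. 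Concretely, if $e$ is the hypotenuse edge of square $j$, then $e$ is incident in $\Gamma(\tau)$ to exactly the two faces forming square $j$; applying $\iota$ and using that it is a graph-level involution respecting the ribbon structure, $\iota(e)$ is the hypotenuse edge of some square $j'$, and the two faces incident to it are $\iota_F$ of the two faces of square $j$. The horizontal and vertical leg edges then pin down the relative position: since $\iota$ negates the classes $\beta$ encircling marked points and negates all of $H_1(\Gamma)$, and since the core curves $\mathcal N(h,v)$ run along the leg-edge directions, $\iota$ acts on the network $\mathcal N(h,v)$ in a way compatible with the square decomposition. Assembling these constraints forces $j' = j$. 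I would organize the final argument by showing: (i) $\iota$ permutes the hypotenuse edges, inducing an involution $\sigma$ of $[s]$ with $\mathrm{sq}(\iota(h)) = \sigma(\mathrm{sq}(h))$; (ii) using the leg edges and Lemma~\ref{lem:Sam's_favorite_formula} on their periods, $\sigma$ commutes suitably with $h$ and $v$; and (iii) a local computation in a single square, or an appeal to the negation of the small loop classes $\beta_j$, shows $\sigma = \mathrm{id}$.
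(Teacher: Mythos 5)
There is a genuine gap. You correctly extract from Lemma~\ref{lem:Sam's_favorite_formula} that $r(f,e) = -r(\iota(f,e))$ and you correctly observe that periods alone cannot distinguish squares (every diagonal of the standard triangulation has period $\pm(1+i)$, every horizontal leg has period $\pm 1$, etc.). But the pivot you need at that point is not to scrutinize periods more closely or to track a combinatorially induced involution $\sigma$ of $[s]$ --- it is to pair $r(f,e)$ against \emph{absolute} homology classes $\alpha \in H_1(\Gamma) \cong H_1(S_g \setminus Z;\Z)$ using the nondegenerate intersection pairing that already appears in the proof of Lemma~\ref{lem:Sam's_favorite_formula}. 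The paper takes $\alpha$ and $\beta$ to be the core curves of the horizontal and vertical cylinders through $f$. If $e$ is the diagonal of its square, then $\langle \alpha, r(f,e)\rangle = \pm 1 \neq 0$, and Lemma~\ref{lem:Sam's_favorite_formula} forces $\langle \alpha, r(f',e')\rangle \neq 0$, which pins $f'$ to the same horizontal cylinder; the symmetric argument with $\beta$ pins $f'$ to the same vertical cylinder. Geometric simplicity is then invoked precisely so that those two cylinders meet in a \emph{unique} square, completing the argument. (The non-diagonal half-edges are handled by $\Z/3\Z$-equivariance of $\iota$, since residing in the same square is a property of the face.)

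Your outline (i)--(iii) gestures at the right ingredients (the matching respects the ribbon structure, the network, the leg edges) but never carries out a computation that forces $\sigma = \mathrm{id}$; step (iii) simply asserts the conclusion. The vagueness stems from not distinguishing between the period of an edge (which is uninformative here) and its intersection number with a cylinder core curve (which is exactly what locates the square). As written, the proposal identifies the obstacle but does not overcome it, so the argument is incomplete.
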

\begin{proof}
Let $h = (f, e) \in H$, and let $(f', e') = \iota(h)$.
Recall the map $r: H \to H_1(S_g, Z; \Z)$ from Definition~\ref{defn:relative_homology_map}.
Assume that $r(f,e)$ is a diagonal edge oriented so that it points down and to the left --- the other cases will follow from the fact that a triangle matching commutes with the $\mathbb{Z}/3\mathbb{Z}$ action on $H_f$.

Let $\alpha$ and $\beta$ denote the core curves of the horizontal and vertical cylinders, respectively, that contain the face $f$.
By Lemma~\ref{lem:Sam's_favorite_formula} we have
\[ 1 = \langle \alpha, r(f, e) \rangle = -\langle \alpha, r(f', e') \rangle, \]
so that $f$ and $f'$ are contained in the same horizontal cylinder.
Moreover,
\[ 1 = \langle \beta, r(f, e) \rangle = -\langle \beta, r(f', e') \rangle, \]
so that $f$ and $f'$ are contained in the same vertical cylinder.
Because $X(h, v)$ is geometrically simple, $\alpha$ and $\beta$ intersect only once, and hence their respective cylinders intersect in a unique square.
It follows that $r(f', e') = -r(f, e)$; that is, the half-edges $h$ and $\iota(h)$ reside in the same square, as desired.
\end{proof}

\begin{thm}\label{thm:arboreal_origamis}
Let $X(h, v)$ be a geometrically simple origami with $h, v \in \Sym(s)$.
The following are equivalent.
\begin{enumerate}
\item $X(h, v)$ is arboreal,
\item $\mathcal \tau(h, v)$ admits a triangle matching,
\item $\#\mathrm{cycles}(h) + \#\mathrm{cycles}(v) = s + 1$,
\end{enumerate}
where $\# \mathrm{cycles}(\sigma)$ denotes the number of cycles of a permutation $\sigma \in \Sym(s)$.
\end{thm}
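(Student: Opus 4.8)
The plan is to prove the cycle of implications $(iii) \Rightarrow (i) \Rightarrow (ii) \Rightarrow (iii)$, using the connected-sum machinery to build the matching in the first leg and the relative-homology lemmas to extract combinatorial consequences in the last leg. The key bookkeeping fact throughout is that for a geometrically simple origami, the horizontal cylinders correspond to the cycles of $h$, the vertical cylinders correspond to the cycles of $v$, and a pair of core curves intersects at most once; thus the network graph $\Lambda_{\mathcal N(h,v)}$ has $\#\mathrm{cycles}(h) + \#\mathrm{cycles}(v)$ vertices and its edges are in bijection with the squares $[s]$ (one square per intersection point, and each square lies in exactly one horizontal and one vertical cylinder). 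Since $\Lambda_{\mathcal N(h,v)}$ is connected (because $h$ and $v$ act transitively), it is a tree if and only if $\#\mathrm{edges} = \#\mathrm{vertices} - 1$, i.e.\ $s = \#\mathrm{cycles}(h) + \#\mathrm{cycles}(v) - 1$. This gives the equivalence $(i) \Leftrightarrow (iii)$ as an elementary graph-theoretic observation, essentially for free.

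For $(i) \Rightarrow (ii)$, I would argue by induction on $s$, peeling off one leaf of the tree $\Lambda_{\mathcal N(h,v)}$ at a time. A single square (the $s=1$ case) is a square torus, whose standard triangulation is the running example's torus matching: the hyperelliptic involution acts, so Lemma~\ref{lem:hyperelliptic_top} provides a triangle matching. For the inductive step, choose a leaf cylinder $C$ of the network tree; the squares of $C$ that are \emph{not} the unique intersection square with its neighbor can be stripped off, and one checks that the resulting origami $X(h', v')$ on $s - 1$ squares is still geometrically simple with arboreal network (removing a leaf keeps a tree a tree). The ribbon graph $\Gamma(\tau(h,v))$ is then a connected sum $\Gamma(\tau(h', v')) \#_{h_1', h_2'} \Gamma(\tau_C)$, where $\tau_C$ is the standard triangulation of a smaller arboreal piece, along a half-edge corresponding to the shared diagonal. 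By Lemma~\ref{lem:nonseparating} the gluing half-edges are nonseparating, so Lemma~\ref{lem:connected_sum_triangle_matching} assembles the matchings $\iota'$ and $\iota_C$ (both of which exist by induction) into a triangle matching on $\Gamma(\tau(h,v))$. I expect this inductive decomposition — specifying precisely which half-edges to cut along and verifying that the standard triangulation respects the connected-sum structure at the level of ribbon graphs — to be the main obstacle, since one must be careful that the isosceles-right-triangle faces on either side of a cut diagonal glue up correctly and that the $\Z/3\Z$-action is compatible across the seam.

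For $(ii) \Rightarrow (iii)$, suppose $\tau(h,v)$ admits a triangle matching $\iota$. By Lemma~\ref{lem:same_cyls}, for every half-edge $h$ the half-edges $h$ and $\iota(h)$ lie in the same square; combined with the fact that $\iota$ commutes with the $\Z/3\Z$-action, this forces the induced map $\iota_F$ on faces to act \emph{within} each square. Pushing this through Lemma~\ref{lem:Sam's_favorite_formula}, the identity $r(f,e) = -r(\iota(f,e))$ says that on each square the two faces are swapped and all relative homology classes of diagonal/side edges are negated — i.e.\ each square carries the standard elliptic involution. One then shows that these per-square involutions glue to a global involution of $S_g$ exactly when adjacent squares' involutions agree on the shared edge, and tracking this compatibility condition around the network graph shows it can be satisfied globally if and only if $\Lambda_{\mathcal N(h,v)}$ has no cycles (a nontrivial cycle in the network would produce a monodromy obstruction, forcing a sign conflict). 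Hence $\Lambda_{\mathcal N(h,v)}$ is a tree, which by the count above is $(iii)$. Alternatively, and perhaps more cleanly, one can observe directly from $\iota(\alpha) = -\alpha$ on $H_1(\Gamma)$ that $\iota$ acts as $-1$ on $H_1(X(h,v))$, and that the subgroup of $H_1$ spanned by core curves of the network has rank $s + 1 - (\#\mathrm{cycles}(h) + \#\mathrm{cycles}(v)) + 1$ worth of independent relations precisely when the network is a tree; I would use whichever of these two routes produces the shorter verification.
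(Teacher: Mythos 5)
Your overall strategy matches the paper's: prove $(i) \Leftrightarrow (iii)$ by counting vertices ($\#\mathrm{cycles}(h) + \#\mathrm{cycles}(v)$) and edges ($s$) of the connected graph $\Lambda_{\mathcal N(h,v)}$; prove $(i) \Rightarrow (ii)$ by inductively peeling leaves of the tree and invoking Lemmas~\ref{lem:connected_sum_triangle_matching} and~\ref{lem:nonseparating}; and prove the converse using Lemmas~\ref{lem:same_cyls} and~\ref{lem:Sam's_favorite_formula}. That said, two parts need correction or sharpening.

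In the inductive step, you speak of ``the squares of $C$ that are not the unique intersection square \ldots\ can be stripped off,'' as though a leaf cylinder might contain several squares. For a geometrically simple arboreal origami this never happens: each square of a horizontal cylinder $\alpha$ lies in exactly one vertical cylinder, so if $\alpha$ meets only one vertical cylinder $\beta$ and meets it at most once, then $\alpha$ consists of exactly one square — the intersection square itself. Thus the operation is precisely ``delete the leaf's single square'' and write $\tau(h,v) = \tau(h',v') \#\, \tau_0$, with $\tau_0$ the standard triangulation of the square torus. The connected-sum half-edges sit at a square \emph{side} (a horizontal or vertical edge of the grid), not at the diagonal, since the diagonal is interior to a square and hence not where a new square attaches.

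The direction $(ii)\Rightarrow(iii)$ is where the gap lies. Your primary route asks whether ``per-square involutions glue to a global involution of $S_g$,'' but this misplaces the question: the triangle matching $\iota : H \to H$ is already a single globally defined combinatorial map; there is nothing to glue. What needs proving is that if $\Lambda_{\mathcal N(h,v)}$ contains a cycle, then no map can simultaneously (a) act within each square as the per-square elliptic involution (forced by Lemma~\ref{lem:same_cyls} together with Lemma~\ref{lem:Sam's_favorite_formula}) and (b) negate $H_1(\Gamma)$. The paper makes this concrete: from a cycle of core curves one builds a simple cycle $\eta$ in $\Gamma$ that turns a corner at each intersection square; since $\eta$ enters one side of such a square and exits an adjacent side, after a homotopy it crosses $r(f,e)$ but not $r(\iota(f,e))$, giving $\langle\eta, r(f,e)\rangle \ne -\langle\eta, r(\iota(f,e))\rangle$ and contradicting Lemma~\ref{lem:Sam's_favorite_formula}. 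Your ``monodromy obstruction'' phrase is gesturing at this, but it needs to be cashed out as the explicit intersection-number calculation. Your alternative route via a rank count of the core-curve subgroup of $H_1$ is not coherent as written — the rank expression does not match any identity I recognize — and I would drop it in favor of the intersection-pairing argument.
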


\begin{proof}
\textbf{(i) $\implies$ (ii):}
We proceed by induction on $s$.
The case $s = 1$ is immediate because the square torus is hyperelliptic (indeed, elliptic) and thus its standard triangulation $\tau_0$ admits a triangle matching by Lemma~\ref{lem:hyperelliptic_delaunay}.
Now, let $s > 1$, and suppose that the claim holds for all origamis with fewer than $s$ squares.

Let $X(h, v)$ be an $s$-square origami.
By hypothesis, the graph $\Lambda_{\mathcal{N}(h, v)}$ is a tree and therefore has a leaf that corresponds to a cylinder core curve $\alpha$.
Without loss of generality, assume that $\alpha$ is horizontal.
Because $\alpha$ is a leaf of $\Lambda_{\mathcal{N}(h, v)}$, $\alpha$ intersects only one vertical core curve $\beta$.
Therefore, $\alpha$ is contained in a single square.
Without loss of generality, this is the $s$th square.
The graph $\Lambda_{\mathcal{N}(h, v)} \setminus \{\alpha\}$ corresponds to the origami $X(h', v')$, where $h', v' \in \Sym(s - 1)$ have the index ``$s$'' removed.
In particular, $\Lambda_{\mathcal N_{(h', v')}} = \Lambda_{\mathcal{N}(h, v)} \setminus \{\alpha\}$ is also a connected tree, since $\alpha$ was a leaf.

By the inductive hypothesis, the standard triangulation $\tau(h', v')$ admits a triangle matching.
Moreover, $\tau(h, v)$ is the connected sum $\tau(h', v') \# \tau_0$, as in Definition~\ref{defn:connected_sum}.
Therefore, $\tau(h, v)$ admits a triangle matching by Lemmas~\ref{lem:connected_sum_triangle_matching}~and~\ref{lem:nonseparating}.\\

\noindent \textbf{(ii) $\implies$ (i):}
Suppose that $X(h, v)$ is not arboreal.
Then, the graph $\Lambda_{\mathcal{N}(h, v)}$ has a simple cycle $\alpha_1, \beta_1, \dots, \beta_{n-1}, \alpha_1$.
Because $X(h, v)$ is geometrically simple, each consecutive pair of core curves meet at a unique point.
Connecting these intersection points with appropriate arcs of core curves, we produce a simple cycle $\eta$ in $\Gamma$ that changes direction at each intersection point.

We now claim that $\tau(h, v)$ cannot admit a triangle matching.
Indeed, Lemma~\ref{lem:same_cyls} implies that two matched half-edges must reside in the same square.
But after a homotopy, $\eta$ will intersect some class $r(f, e)$ in a square containing an intersection point while avoiding the class $r(\iota(f, e))$ in that same square.
This violates Lemma~\ref{lem:Sam's_favorite_formula} which implies $\langle \eta, r(f, e) \rangle = - \langle \eta, r(\iota(f, e)) \rangle$.\\

\noindent \textbf{(i) $\iff$ (iii):}
We have that $\Lambda_{\mathcal{N}(h, v)}$ is a tree if and only if it has one more vertex than its number of edges.
Note that $\Lambda_{\mathcal N(h, v)}$ has $\# \textrm{cycles}(h) + \# \textrm{cycles}(v)$ vertices corresponding to horizontal and vertical cylinders and $s$ edges corresponding to intersection points of horizontal and vertical cylinders.
We conclude that $X(h, v)$ is arboreal if and only if $\# \textrm{cycles}(h) + \# \textrm{cycles}(v) = s + 1$.
\end{proof}

\begin{cor}\label{cor:arboreal_in_every_stratum}
For $g \ge 3$, every nonhyperelliptic component of the projectivized stratum of $\P \Omega \mathcal T_{g, n}(\kappa)$ contains a contractible iso-Delaunay region.
\end{cor}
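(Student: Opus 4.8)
The plan is to realize each nonhyperelliptic component by an arboreal origami, to extract a triangle matching on its standard triangulation from Theorem~\ref{thm:arboreal_origamis}, and to conclude with Theorem~\ref{thm:main_theorem_precise}. The only delicate point is that an origami lies on the \emph{boundary} of the iso-Delaunay region of its standard triangulation (each square is cyclic, so the face-opposite-angle sums across the diagonal edges equal $\pi$), so a small perturbation is needed to certify that this region is nonempty.

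First, I would invoke the realization of stratum components by arboreal networks: for $g \ge 3$, every nonhyperelliptic component of $\P\Omega\mathcal T_{g,n}(\kappa)$ contains a geometrically simple origami $X(h,v)$ whose network $\mathcal N(h,v)$ is arboreal in the sense of Definition~\ref{defn:arboreal}. This follows from the constructions of Calderon--Salter \cite{CalderonSalter} and Hamenst\"adt \cite{Hamenstadt}, whose admissible curve systems are exactly arboreal networks. Prescribed marked points are harmless: one designates the appropriate corners of the origami as marked, adjoining extra squares by connected sum as in Lemma~\ref{lem:connected_sum_triangle_matching} if more corner-classes are required, which preserves both arboreality and triangle matchings. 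By Theorem~\ref{thm:arboreal_origamis}, the standard triangulation $\tau \coloneqq \tau(h,v)$ (Definition~\ref{defn:standard_triangulation}) then admits a triangle matching, so Theorem~\ref{thm:main_theorem_precise} shows that $\Delta_{\mathcal D}(\tau)$ is convex in angle coordinates and, provided it is nonempty, connected and contractible.

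It remains to show that $\Delta_{\mathcal D}(\tau)$ is nonempty and lies in the desired component. For small $s>0$, set $X_s \coloneqq \begin{psmallmatrix} 1 & -s \\ 0 & 1 \end{psmallmatrix}\cdot X(h,v)$; this surface lies in the same stratum component as $X(h,v)$, being joined to it by the path $t \mapsto \begin{psmallmatrix} 1 & -t \\ 0 & 1\end{psmallmatrix}\cdot X(h,v)$ inside the $\mathrm{SL}(2,\R)$-orbit. The topological triangulation $\tau$ remains a geometric triangulation of $X_s$, each sheared square being a parallelogram cut along its positively sloped diagonal. A one-parallelogram computation shows that for $s>0$ the vertex opposite each diagonal edge lies \emph{strictly} outside the circumcircle of the incident face; moreover the circumscribed disks of the faces have radius near $1/\sqrt2$ and, by continuity from the configuration at $s=0$ (in which every square-corner that is not a vertex of the face lies at distance at least $\sqrt{10}/2$ from the relevant circumcenter, while $1/\sqrt2<\sqrt{10}/2$), these disks meet $Z$ in exactly the three vertices of their face. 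Hence $\tau$ is a nondegenerate Delaunay triangulation of $X_s$, so $[X_s]\in\Delta_{\mathcal D}(\tau)$; being connected, $\Delta_{\mathcal D}(\tau)$ then lies entirely in the component of $X(h,v)$.

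The main obstacle is the first step --- producing, in \emph{every} nonhyperelliptic component of \emph{every} such stratum and for the prescribed zero multiplicities, a geometrically simple arboreal origami (including whatever low-genus cases are not covered directly by the cited realization theorems). Granting that input, the appeal to Theorems~\ref{thm:arboreal_origamis}~and~\ref{thm:main_theorem_precise} together with the perturbation argument above is routine.
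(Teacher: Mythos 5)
Your proof follows the paper's route exactly: realize each nonhyperelliptic component by a geometrically simple arboreal origami via Calderon--Salter/Hamenst\"adt, apply Theorem~\ref{thm:arboreal_origamis} to extract a triangle matching on $\tau(h,v)$, show the iso-Delaunay region is nonempty, and conclude with Theorem~\ref{thm:main_theorem_precise}. The one point of difference is the nonemptiness check: the paper applies the single matrix $A = \begin{psmallmatrix} 1 & -1/2 \\ 0 & \sqrt{3}/2 \end{psmallmatrix}$, turning every square into two equilateral triangles, so that $\tau(h,v)$ is manifestly a nondegenerate Delaunay triangulation of $A \cdot X(h,v)$ with no quantitative estimate required, whereas you use a small shear and a perturbation bound on circumradii. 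Your computation is correct, but the phrase ``every square-corner that is not a vertex of the face lies at distance at least $\sqrt{10}/2$'' at $s=0$ should exclude the diagonally opposite corner of the \emph{same} square, which sits at distance exactly $1/\sqrt{2}$ from the circumcenter and is the point your preceding sentence already treats; the $\sqrt{10}/2$ bound is only for corners of adjacent squares. The ``main obstacle'' you flag --- realizing every nonhyperelliptic component, with the prescribed $\kappa$, by an arboreal origami --- is resolved by the paper in the same way you suggest, citing Calderon--Salter Lemma~6.15 (equivalently Hamenst\"adt Theorem~1.3) and replacing the Thurston--Veech rectangles with unit squares.
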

\begin{proof}
Lemma~6.15 of \cite{CalderonSalter}, or equivalently Theorem~1.3 of \cite{Hamenstadt}, provides explicit arboreal networks $\mathcal N$ that, after applying the Thurston--Veech construction (see, for instance, \cite[\S 6]{McMullenSurvey}), produce abelian differentials in every nonhyperelliptic component of the moduli stratum $\Omega \mathcal M_{g, n}(\kappa)$.
Replacing all the rectangles in the Thurston--Veech decomposition with unit squares (and choosing appropriate mapping class group lifts) produces origamis $X(h, v)$ in each nonhyperelliptic component of $\Omega \mathcal T_{g, n}(\kappa)$.

Note that the matrix
\[ A =
\begin{pmatrix}
1 & -1/2\\
0 & \sqrt{3}/2\\
\end{pmatrix}
\]
maps the triangles with vertices $\{(0,0), (1, 0), (0, 1)\}$ and $\{(1,0), (1,1), (0,1)\}$ onto equilateral triangles.
The matrix $A$ then acts on $X(h, v)$ to give a translation surface $A \cdot X(h, v)$ on which every triangle of $\tau(h, v)$ is equilateral.
An equilateral triangulation is nondegenerate Delaunay, so the projective class of $A \cdot X(h, v)$ belongs to the nonempty iso-Delaunay region $\Delta_{\mathcal D}(\tau(h, v))$.
Since $\mathcal{N}$ is aboreal, Theorem~\ref{thm:arboreal_origamis} implies that $\tau(h, v)$ admits a triangle matching and $\Delta_{\mathcal D}(\tau(h, v))$ is contractible by Theorem~\ref{thm:main_theorem_precise}.
\end{proof}

We conclude this subsection with two contrasting examples of origamis.

\begin{ex}[Prym Origami]
Let $h = (12)(3)(45)$ and $v = (1)(234)(5)$.
The origami $X(h, v)$ resides in the stratum $\Omega \mathcal{T}_{3, 1}(4)$, and Theorem~\ref{thm:arboreal_origamis} implies that $X(h, v)$ is arboreal.
Recall that the triangle matching $\iota$ identifies together the half-edges in each square as in the case of the square L surface in Example~\ref{ex:square_l_matching}.

In fact, $X(h, v) = (X, \omega)$ is a minimal Prym eigenform and hence possesses a holomorphic Prym involution $\iota_\textrm{Prym} : X \to X$ that acts as $-1$ on the underlying 1-form $\omega$.
(See McMullen~\cite{McmullenPrym} for more details.)
But while $\iota_\textrm{Prym}$ induces a ribbon graph automorphism, it is \emph{not} a triangle matching --- it only acts as $-1$ on a factor of $H_1(X)$.
For instance, two horizontal core curves are exchanged instead of being fixed setwise.
\end{ex}

\begin{ex}[Escher Staircase]\label{ex:escher}
Let $h = (12)(34)(56)$, $v = (23)(45)(16)$ and consider the origami $X(h, v)$ as in Figure~\ref{fig:escalator}.

\begin{figure}
    \centering    \includegraphics[width=0.7\linewidth]{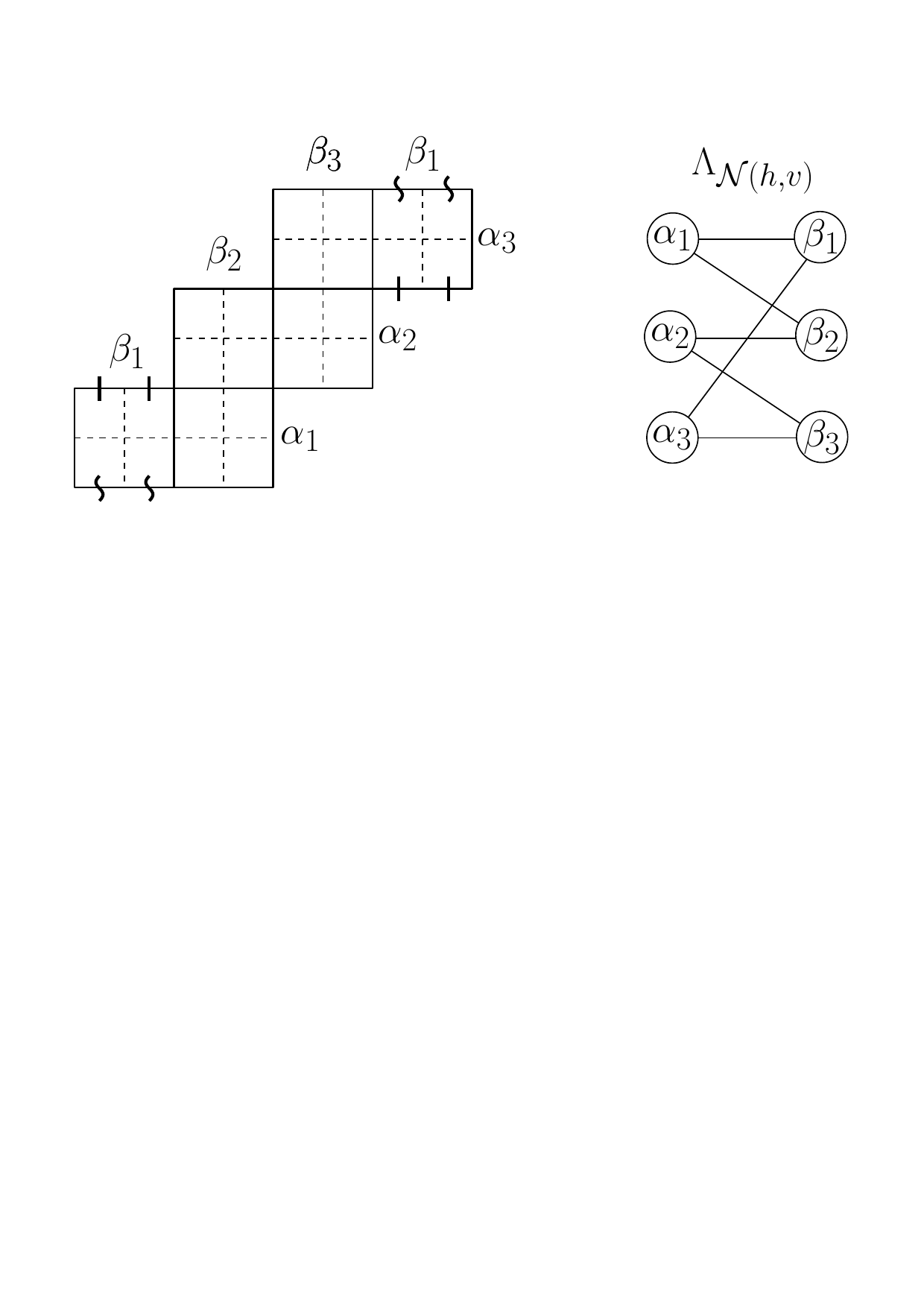}
    \caption{The origami $X(h, v)$ with $h = (12)(34)(56)$ and $v = (23)(45)(16)$.
    All side identifications are implied except for those with edge markings.}
    \label{fig:escalator}
\end{figure}

As $\# \textrm{cycles}(h) + \# \textrm{cycles}(v) = 6$, but $s+1 = 7$, Theorem~\ref{thm:arboreal_origamis} implies that $\tau(h, v)$ does \textit{not} admit a triangle matching.
It would be very interesting to determine whether $\Delta_{\mathcal D}(\tau(h, v))$ is convex in the appropriate angle space.
\end{ex}

\section{Future directions}\label{sec:future_directions}
The structure of the iso-Delaunay regions is of independent interest, as seen in the works \cite{MasurSmillie}, \cite{Bowman}, \cite{VeechBicuspid}, \cite{VeechDelaunay}, and \cite{Rivin} mentioned in the introduction.
In addition, if it were known that \emph{all} the iso-Delaunay regions were convex, then these regions could be used to form a cellulation of $\Omega \mathcal T_{g, n}(\kappa)$, and also of the moduli stratum $\Omega \mathcal M_{g, n}(\kappa)$.
This is precisely what is done in the case of iso-Delaunay regions for the $L^\infty$-metric in \cite{Zykoski}.

In that context, the induced cellulation may be understood computationally by way of linear programming, but precise analysis of the general combinatorial properties of that cellulation are stymied by its sensitive dependance on the geometry of translation surfaces.
However, the main results of this paper depend almost entirely on the combinatorial structure of surface triangulations.
This gives us optimism that these iso-Delaunay regions may produce a cellulation of $\Omega \mathcal M_{g, n}(\kappa)$ about which one can more easily prove structural results, and hence obtain results on the topology of $\Omega \mathcal M_{g, n}(\kappa)$, a subject about which few general results are known.

\newpage
\printbibliography
\end{document}